\documentclass [11pt,oneside,a4paper,mathscr]{amsart}

\usepackage{amscd,amsmath,amssymb,euscript,tabu,multirow,float,tikz-cd,tensind,slashbox,}
\tensordelimiter{?}
\usepackage[frame,cmtip,curve,arrow,matrix,line,graph]{xy}
\usepackage{xcolor}
\usepackage[colorlinks=true, linkcolor=., citecolor=blue]{hyperref}

\title[Stephen Pietromonaco]{$G$-invariant Hilbert Schemes on Abelian Surfaces and Enumerative Geometry of the Orbifold Kummer Surface}
\author{Stephen Pietromonaco}

\date{}

\openup1.6\jot
\setlength{\topmargin}{0.1\topmargin}
\setlength{\oddsidemargin}{0.5\oddsidemargin}
\setlength{\evensidemargin}{0.5\oddsidemargin}
\setlength{\textheight}{1.02\textheight}
\setlength{\textwidth}{1.1\textwidth}

\newtheorem{thm}{Theorem}[section]
\newtheorem{conj}[thm]{Conjecture}
\newtheorem{cor}[thm]{Corollary}
\newtheorem{prop}[thm]{Proposition}
\newtheorem{lemma}[thm]{Lemma}

\newenvironment{pf}{\paragraph{Proof}}{\qed\par\medskip}

\renewcommand{\leq}{\leqslant}
\renewcommand{\geq}{\geqslant}

\theoremstyle{definition}
\newtheorem{defn}[thm]{Definition}
\newtheorem{rmk}[thm]{Remark}

\newcommand{\Pic}{\operatorname{Pic}}
\newcommand{\Jac}{\operatorname{Jac}}

\newcommand{\Aut}{\operatorname{Aut}}
\newcommand{\Autz}{\operatorname{Aut_{0}}}

\newcommand{\C}{\mathbb C}
\newcommand{\Hilb}{\operatorname{Hilb}}

\newcommand{\Q}{\mathbb Q}

\newcommand{\Z}{\mathbb Z}
\newcommand{\HH}{\mathbb H}
\newcommand{\RR}{\mathbb R}

\renewcommand{\P}{\mathbb{P}}

\newcommand{\SL}{\operatorname{SL}}

\renewcommand{\Im}{\operatorname{Im}}

\newcommand{\Chow}{\operatorname{Chow}}

\newcommand{\R}{\mathbb{R}}
\newcommand{\NS}{\operatorname{NS}}

\newcommand{\SU}{\operatorname{SU}}

\newcommand{\Mod}[1]{\ (\mathrm{mod}\ #1)}
\newcommand{\Hii}{\Hilb^{d}(A)^{G}}
\newcommand{\Part}{Z_{A,G}}
\newcommand{\Parti}{Z_{A,G}^{-1}}
\newcommand{\RelJac}{\overline{\mathcal{J}\text{ac}}^{\,\, d+1}_{\beta_{d}}(A)}
\newcommand{\CompJac}{\operatorname{\overline{Jac}}}
\newcommand{\Sing}{\operatorname{Sing}}
\newcommand{\Hf}{\mathfrak{H}}
\newcommand{\Zbir}{Z^{\textsf{bir}}}
\newcommand{\MT}{\mathsf{MT}_{y}}
\newcommand{\hdg}{\mathsf{h}_{d}(g)}
\newcommand{\ndh}{\mathsf{n}_{d}(h)}
\newcommand{\ndo}{\mathsf{n}_{d}(0)}

\newcommand{\Amod}{[A/\tau]}

\mathchardef\ordinarycolon\mathcode`\:
\mathcode`\:=\string"8000
\begingroup \catcode`\:=\active
  \gdef:{\mathrel{\mathop\ordinarycolon}}
\endgroup



\begin{document}

\begin{abstract}
For an Abelian surface $A$ with a symplectic action by a finite group $G$, one can define the partition function for $G$-invariant Hilbert schemes
\[Z_{A, G}(q) = \sum_{d=0}^{\infty} e(\text{Hilb}^{d}(A)^{G})q^{d}.\]
We prove the reciprocal $Z_{A,G}^{-1}$ is a modular form of weight $\frac{1}{2}e(A/G)$ for the congruence subgroup $\Gamma_{0}(|G|)$, and give explicit expressions in terms of eta products.  Refined formulas for the $\chi_{y}$-genera of $\text{Hilb}(A)^{G}$ are also given.  For the group generated by the standard involution $\tau : A \to A$, our formulas arise from the enumerative geometry of the orbifold Kummer surface $[A/\tau]$.  We prove that a virtual count of curves in the stack is governed by $\chi_{y}(\text{Hilb}(A)^{\tau})$.  Moreover, the coefficients of $Z_{A, \tau}$ are true (weighted) counts of rational curves, consistent with hyperelliptic counts of Bryan, Oberdieck, Pandharipande, and Yin.  
\end{abstract}

\maketitle


\section{Introduction}

\subsection{G\"{o}ttsche's Formula on the Orbifold \boldmath{$[A/G]$}}

In one of the seminal results of modern algebraic geometry, G\"{o}ttsche computed the Betti numbers of the Hilbert scheme of points $\Hilb^{d}(S)$ where $S$ is a smooth quasi-projective surface \cite{gottsche_betti_1990}.  When $S$ is a $K3$ surface, he found the remarkable formula
\begin{equation}\label{eqn:GottscheFormula}
\sum_{d=0}^{\infty} e( \Hilb^{d}(S))q^{d-1} = q^{-1} \prod_{n=1}^{\infty} (1-q^{n})^{-24}.
\end{equation}
The reciprocal of the righthand side equals the modular cusp form $\Delta(q) = \eta(q)^{24}$ of weight $12$.  

However, in the case of an Abelian surface $A$, G\"{o}ttsche's formula is trivial since $e(\Hilb^{d}(A))=0$ for $d>0$.  One can instead consider Abelian surfaces together with an action by a finite group.  By studying the invariant loci---or equivalently, working on the orbifold---we will find a family of results analogous to (\ref{eqn:GottscheFormula}).  

Consider a complex Abelian surface $A$, and a finite group $G$ acting on $A$ preserving the holomorphic symplectic form.  We will call such an action \emph{symplectic}.  The natural generating function produced from this data is
\begin{equation}\label{eqn:MainDefn}
\Part(q) := \sum_{d=0}^{\infty} e(\Hii)q^{d}
\end{equation}
where $\Hii$ is the $G$-invariant Hilbert scheme, parameterizing finite $G$-invariant subschemes of length $d$.  It is equivalently the fixed locus of the induced $G$ action on $\Hilb^{d}(A)$.  The $G$-invariant Hilbert scheme is disconnected, though each component is a smooth projective holomorphic symplectic variety of $K3$-type.

By definition of the orbifold $[A/G]$, we have
\[\Hilb(A)^{G} = \Hilb([A/G]).\]
So we regard $\Part(q)$ as analogous to the lefthand side of (\ref{eqn:GottscheFormula}) for $[A/G]$.  Our following result can be understood as the analogue of G\"{o}ttsche's formula for the orbifold $[A/G]$ (see Appendix \ref{modular} for details on the modular forms). 
\begin{thm}\label{thm:MainThm}
The function $\Parti(q)$ is a modular form of weight $\frac{1}{2}e(A/G)$ for the congruence subgroup $\Gamma_{0}(|G|)$.  Moreover, $\Parti$ is an explicit eta product (see Table \ref{table:TAB1} and Proposition \ref{prop:TransProp} below), and transforms with multiplier system induced from that of the Dedekind eta function.  It is a holomorphic, non-cuspidal form, normalized with leading coefficient 1.    
\end{thm}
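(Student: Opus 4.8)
The plan is to compute the series $\Part(q)$ as an explicit infinite product and then recognise $\Parti(q)$ as an eta product on $\Gamma_{0}(|G|)$; once the product is in hand, every assertion (modularity, weight, level, multiplier, holomorphy, non-cuspidality, normalisation) follows from the classical theory of eta products due to Ligozat and Newman. I would first stratify the coarse quotient $A/G$ by isotropy type, writing $S_{H}$ for the locally closed stratum of points whose stabiliser is conjugate to a subgroup $H$. Because the $G$-action is symplectic, each such $H$ embeds in $\SL(2,\C)$, hence is an ADE group acting on the tangent space of its fixed points with no non-zero invariant vectors. A $G$-invariant subscheme of $A$ supported on a single orbit with stabiliser conjugate to $H$ and index $\delta=|G|/|H|$ is precisely an $H$-fixed punctual subscheme of $\C^{2}$, but its length in $A$ is multiplied by $\delta$; such subschemes therefore contribute the series $L_{H}(q^{\delta})$, where $L_{H}(t):=\sum_{\ell\geq 0}e\bigl(\Hilb^{\ell}_{0}(\C^{2})^{H}\bigr)\,t^{\ell}$. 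Since Euler characteristic is motivic and configurations of orbits inside a stratum are organised by the usual power structure on the Grothendieck ring (as in G\"{o}ttsche's proof of his own formula), summing over all $G$-invariant subschemes yields
\[
\Part(q)\;=\;\prod_{[H]}\,L_{H}\!\bigl(q^{\,|G|/|H|}\bigr)^{\,e(S_{H})},
\]
the product over conjugacy classes of stabilisers; note that, since $e(A)=0$, the exponent $e(S_{\{e\}})$ of the free stratum equals $-\tfrac{1}{|G|}$ times the Euler characteristic of the non-free locus of $A$.

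Next I would evaluate each $L_{H}(t)$. The punctual Hilbert scheme $\Hilb^{\ell}_{0}(\C^{2})$ carries the central scaling torus, which commutes with $H$, so $e\bigl(\Hilb^{\ell}_{0}(\C^{2})^{H}\bigr)$ equals the Euler characteristic of the $H$-fixed locus inside the explicitly understood variety of graded ideals. When $H$ lies in a maximal torus --- in particular when $H$ is cyclic --- a one-line localisation argument identifies this fixed locus, up to Euler characteristic, with the set of monomial ideals, so $e\bigl(\Hilb^{\ell}_{0}(\C^{2})^{H}\bigr)$ is the partition number $p(\ell)$ and $L_{H}(t)=\prod_{m\geq 1}(1-t^{m})^{-1}$. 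For the remaining (non-abelian) ADE stabilisers one computes the fixed-point contributions directly and finds, case by case, that $L_{H}(t)$ is again an eta-type product $\prod_{m\geq 1}\prod_{d\geq 1}(1-t^{dm})^{c_{H,d}}$ with $c_{H,d}\in\Z$. By Fujiki's classification of finite symplectic group actions on two-dimensional complex tori there are only finitely many pairs $(A,G)$, hence only finitely many such $H$. Substituting the $L_{H}$ into the displayed product and collecting powers of $q$ gives
\[
\Part(q)\;=\;\prod_{\delta\,\mid\,|G|}\ \prod_{m\geq 1}(1-q^{\delta m})^{a_{\delta}},\qquad a_{\delta}\in\Z,
\]
with the $a_{\delta}$ determined by the orbit-type data of $(A,G)$.

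Finally, using $\prod_{m\geq 1}(1-q^{\delta m})=q^{-\delta/24}\eta(\delta\tau)$ (with $q=e^{2\pi i\tau}$) rewrites this as $\Part(q)=q^{\frac{1}{24}\sum_{\delta}\delta a_{\delta}}\,\prod_{\delta}\eta(\delta\tau)^{a_{\delta}}$; the balancing identity $\sum_{\delta}\delta\,a_{\delta}=0$ --- which I would verify directly, its vanishing reflecting $e(A)=0$ --- kills the prefactor, so
\[
\Parti(q)\;=\;\prod_{\delta\,\mid\,|G|}\eta(\delta\tau)^{r_{\delta}},\qquad r_{\delta}:=-a_{\delta},
\]
is a genuine eta product, and one checks the companion identity $\sum_{\delta}r_{\delta}=e(A/G)$ (it encodes $e(A/G)=\sum_{[H]}e(S_{H})$ together with the local computations). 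Now the criterion of Ligozat and Newman applies: the congruences $\sum_{\delta}\delta r_{\delta}\equiv 0$ and $\sum_{\delta}(|G|/\delta)\,r_{\delta}\equiv 0\pmod{24}$, together with the product/character condition, are checked from the explicit $r_{\delta}$, exhibiting $\prod_{\delta}\eta(\delta\tau)^{r_{\delta}}$ as a modular form on $\Gamma_{0}(|G|)$ of weight $\tfrac{1}{2}\sum_{\delta}r_{\delta}=\tfrac{1}{2}\,e(A/G)$ carrying the multiplier system inherited from $\eta$; Ligozat's formula for the order of an eta product at each cusp then shows all such orders are $\geq 0$ (holomorphy at the cusps) while the order at $\infty$ is $0$, so $\Parti$ is non-cuspidal and, since $\Part=1+O(q)$, is normalised with leading coefficient $1$.

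I expect the principal obstacle to be the local step: computing $e\bigl(\Hilb^{\ell}_{0}(\C^{2})^{H}\bigr)$ for the non-abelian ADE stabilisers that occur, which do not sit inside a torus and so are not directly accessible by torus localisation --- one must either analyse the $H$-action on graded ideals by hand or pass through the McKay correspondence for $\C^{2}/H$. A further, more clerical difficulty is confirming the two numerical identities $\sum_{\delta}\delta r_{\delta}=0$ and $\sum_{\delta}r_{\delta}=e(A/G)$, together with the Ligozat congruences, uniformly over Fujiki's list.
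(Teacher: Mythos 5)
Your global strategy coincides with the paper's: stratify by stabilizer, treat the free part with G\"ottsche's formula (with exponent $e((A/G)^{\circ})=-\sum_i 1/k_i$, using $e(A)=0$ to kill the fractional power of $q$), multiply in local series for the finitely many orbits with non-trivial stabilizer, and then invoke the standard eta-product machinery for level, weight, holomorphy at the cusps and non-cuspidality (the paper uses Proposition \ref{prop:KohlerProp} rather than the Ligozat--Newman congruences; note that several entries of Table \ref{table:TAB1} have half-integral weight and non-trivial multiplier, so the criterion with multiplier system is the one actually needed, and the $\bmod\ 24$ conditions play no role). Your treatment of cyclic stabilizers by localizing with respect to the commuting torus onto monomial ideals is correct and recovers the $A_{n}$ local factor $\prod_{m\geq 1}(1-t^{m})^{-1}$.

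The genuine gap is exactly the step you yourself flag as the ``principal obstacle'': the local series $L_{H}(t)$ for the non-abelian stabilizers $H\cong\mathcal{Q},\mathcal{D},\mathcal{T}$ (ADE types $D_{4}$, $D_{5}$, $E_{6}$). You assert that ``one computes the fixed-point contributions directly and finds, case by case, that $L_{H}(t)$ is again an eta-type product,'' but no such computation is given, and these groups do not sit in a maximal torus, so the localization argument that works in the cyclic case is unavailable. This is precisely the content of the Bryan--Gyenge computation (Theorem \ref{Bryan-Gynge}, from \cite{bryan_g-fixed_2020_TWO}), which is the essential external input in the paper's proof of Theorem \ref{thm:MainThm}; it is a substantial result in its own right, not a routine case check. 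Without it you cannot produce the explicit eta products in rows 6--11 of Table \ref{table:TAB1}, you cannot verify the structural fact that each local factor only involves $\eta(t^{c})$ with $c\mid |H|$ (which is what guarantees level $|G|$ after the substitution $t=q^{|G|/|H|}$), and you cannot carry out the cusp-by-cusp check of holomorphy and non-cuspidality, since that requires the explicit exponents $r_{\delta}$. Two smaller omissions: the theorem concerns arbitrary finite symplectic $G$, including those containing translations, which the paper handles by the reduction $Z_{A,G}(q)=Z_{A',G'}(q^{|T|})$ of Proposition \ref{prop:TransProp} (your isotropy stratification would absorb this case, but you never address it or the resulting level $|G|=|T|\cdot|G'|$); and Fujiki's classification gives finitely many equivalence classes of actions (hence finitely many possible stabilizer types), not finitely many pairs $(A,G)$ --- the cyclic cases occur in positive-dimensional families.
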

\noindent Our proof of Theorem \ref{thm:MainThm} relies on recent methods of Bryan-Gyenge \cite{bryan_g-fixed_2020_TWO} in the case of $K3$ surfaces.

Fujiki has completely classified symplectic actions by finite groups on Abelian surfaces \cite{fujiki_finite_1988}.  In the case where the subgroup acting by translations is trivial, the only groups which arise, up to isomorphism, are
\[ \Z_{2}, \,\, \Z_{3}, \,\, \Z_{4}, \,\, \Z_{6}, \,\, \mathcal{Q}, \,\, \mathcal{D}, \,\, \mathcal{T}\]
where we denote by $\Z_{n}$ the cyclic group of order $n$, and by $\mathcal{Q}, \mathcal{D}$, $\mathcal{T}$ the quaternion group of order 8, the binary diherdral group of order 12, and the binary tetrahedral group of order 24, respectively.  Recall that these groups fall into the ADE classification: $\Z_{n}$ has ADE type $A_{n-1}$ while $\mathcal{Q}, \mathcal{D}, \mathcal{T}$ have types $D_{4}, D_{5}, E_{6}$, respectively.

First consider actions by group homomorphisms; we call these \emph{linear}.  All Abelian surfaces carry a unique linear action by $\langle \tau \rangle \cong \Z_{2}$ where $\tau : A \to A$ is the standard involution.  Hence, $A$ admits a symplectic linear $\Z_{3}$ action if and only if it does so for $\Z_{6}$.  Then it suffices to study $G$ isomorphic to one of $\Z_{4}, \Z_{6}, \mathcal{Q}, \mathcal{D}, \mathcal{T}$.  

\begin{rmk}
By the physical arguments of \cite[Sec.\ 4]{volpato_symmetries_2014_TWO}, we understand why precisely these groups arise.  Let $G$ be isomorphic to one of the five groups listed above, and let $\overline{G}= G/ \langle \tau \rangle$ be the quotient by the unique order $2$ subgroup.  Then the five $\overline{G}$ are precisely the subgroups of the even Weyl group $W^{+}(E_{8})$ of the $E_{8}$ root lattice which pointwise fix a lattice of rank at least $4$.  This is in close analogy with the classification in the case of $K3$ surfaces.  
\end{rmk}

Any group $G$ with a symplectic action on $A$ can be written uniquely as an extension
\[0 \to T \to G \to G_{0} \to 0\]
where $T \subseteq G$ is the subgroup of all elements acting by translation, and the quotient $G_{0}$ acts linearly and symplectically on $A$.  If $T$ is trivial, we say the $G$ action is \emph{translation-free}.  Note that $T \cong \Z_{a} \times \Z_{b}$ for some $a,b \geq 1$.  

In Table \ref{table:TAB1} we present the modular form $\Parti$ for all equivalence classes of translation-free actions.  For such actions, $G$ and $G_{0}$ are abstractly isomorphic.  However, $G$ might not act linearly.  Notice there are translation-free actions by $\mathcal{Q}$ and $\mathcal{T}$ without fixed points (Nos.\@ $8$ and $11$ in the table, respectively), so in particular, they do not preserve the origin.  

\begin{table}[h]
\centering
{\tabulinesep=1.6mm
\begin{tabu}{|| c | c | c | c | c ||}
\hline
No. & $G$ &  Singularities of $A/G$ & Modular form $\Parti$ &  $\frac{1}{2}e(A/G)$ \\
\hline \hline
1 & $\{e\}$ &  --- & 1 & 0\\
\hline
2 & $\Z_{2}$ &  $16A_{1}$ & $\frac{\eta^{16}(q)}{\eta^{8}(q^{2})}$ & 4 \\
\hline
3 & $\Z_{3}$ &  $9A_{2}$ & $\frac{\eta^{9}(q)}{\eta^{3}(q^{3})}$ & 3 \\
\hline
4 & $\Z_{4}$ &  $4A_{3} + 6A_{1}$ & $\frac{\eta^{6}(q^{2}) \eta^{4}(q) }{\eta^{4}(q^{4})}$ & 3 \\
\hline
5 & $\Z_{6}$ &  $A_{5}+4A_{2}+5A_{1}$ & $\frac{\eta^{5}(q^{3}) \eta^{4}(q^{2})  \eta(q) }{\eta^{4}(q^{6})}$ & 3 \\
\hline
6 & $\mathcal{Q}$ &  $2D_{4} + 3 A_{3} + 2A_{1}$ & $\frac{\eta^{8}(q^{4}) \eta^{2}(q)}{\eta^{4}(q^{8}) \eta(q^{2})}$ & $5/2$ \\
7 & $\mathcal{Q}$ &  $4D_{4} + 3 A_{1}$ & $\frac{\eta^{15}(q^{4}) \eta^{4}(q)}{\eta^{6}(q^{8}) \eta^{8}(q^{2})}$ & $5/2$ \\
8 & $\mathcal{Q}$ &  $6A_{3} + A_{1}$ & $\frac{\eta(q^{4}) \eta^{6}(q^{2})}{\eta^{2}(q^{8})}$ & $5/2$ \\
\hline
9 & $\mathcal{D}$ &  $D_{5}+3A_{3}+2A_{2}+A_{1}$ & $\frac{\eta^{3}(q^{6}) \eta^{3}(q^{4}) \eta^{3}(q^{3}) \eta(q)}{\eta^{3}(q^{12}) \eta^{2}(q^{2})}$ & $5/2$\\
\hline 
10 & $\mathcal{T}$ &  $E_{6}+D_{4}+4A_{2}+A_{1}$ & $\frac{\eta^{5}(q^{12}) \eta^{6}(q^{8}) \eta(q^{3}) \eta(q)}{\eta^{4}(q^{24}) \eta^{2}(q^{6}) \eta^{2}(q^{2})}$ & $5/2$\\
11 & $\mathcal{T}$ &  $A_{5}+2A_{3}+4A_{2}$ & $\frac{\eta^{4}(q^{8}) \eta^{2}(q^{6}) \eta(q^{4})}{\eta^{2}(q^{24})}$ & $5/2$\\
\hline
\end{tabu}}
\caption{The modular forms $\Parti(q)$ for symplectic, translation-free actions.  The weight of the modular form is $\frac{1}{2}e(A/G)$, which is presented in the last column.}
\label{table:TAB1}
\end{table}

This reduces the problem to computing $\Part$ when $T$ is non-trivial.  Interpreting $T$ as a subgroup of $A$ let $A'=A/T$, which is again an Abelian surface.  We then get a symplectic translation-free action of $G' = G/T$ on the Abelian surface $A'$.  In Section \ref{partitionfunc} we will prove the following result.\footnote{Note that $G_{0}$ and $G'$ are abstractly isomorphic.  But we distinguish them because they are different groups acting on different spaces.}  
\begin{prop}\label{prop:TransProp}
With the notation as above, we have
\[\Part(q) = Z_{A', G'}(q^{|T|}).\]
In particular, the modular form $\Parti$ where $G$ has translations is an oldform: it is equal to a modular form $Z_{A', G'}^{-1}$ from Table \ref{table:TAB1} with the variable change $q \mapsto q^{|T|}$. 
\end{prop}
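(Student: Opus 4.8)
The plan is to show that for each $d$, pullback along the quotient $\pi\colon A\to A'=A/T$ induces a $G'$-equivariant isomorphism of schemes $\Hilb^{d/|T|}(A')\cong\Hilb^{d}(A)^{T}$ when $|T|\mid d$, while $\Hilb^{d}(A)^{T}=\emptyset$ otherwise; taking $G'$-fixed loci and then Euler characteristics yields the stated identity of generating functions, and the oldform assertion is then immediate from Theorem \ref{thm:MainThm} applied to the translation-free pair $(A',G')$.

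The geometric input is that $T$ acts on $A$ by translations by nonzero points, hence freely, so $\pi\colon A\to A'$ is a finite \'etale $T$-torsor of degree $|T|$ and $A'$ is again an Abelian surface carrying the asserted symplectic translation-free $G'$-action. Pulling back along $\pi$ sends a length-$n$ subscheme $Z'\subseteq A'$ to the $T$-invariant subscheme $\pi^{-1}(Z')\subseteq A$ of length $n|T|$ (since $\pi$ is finite flat of degree $|T|$); conversely, a $T$-invariant $Z\subseteq A$ descends by \'etale descent along the torsor $\pi$ to $Z/T\subseteq A'$, with $\pi^{-1}(Z/T)=Z$. In particular every $T$-invariant subscheme of $A$ has length divisible by $|T|$, so $\Hilb^{d}(A)^{T}=\emptyset$ unless $|T|\mid d$.

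Running the same construction over an arbitrary base $S$ --- pulling back flat families along the \'etale $T$-torsor $\pi\times\id_{S}\colon A\times S\to A'\times S$, and descending $T$-invariant flat families the other way --- identifies the functor represented by $\Hilb^{n}(A')$ with the subfunctor of $\Hilb^{n|T|}(A)$ of $T$-invariant families, which is exactly the subfunctor represented by the fixed subscheme $\Hilb^{n|T|}(A)^{T}$; hence $\Hilb^{n}(A')\cong\Hilb^{n|T|}(A)^{T}$ as schemes. This isomorphism is $G$-equivariant: $T$ acts trivially on the right-hand side (a $T$-invariant subscheme is $T$-fixed), so the $G$-action on $\Hilb^{n|T|}(A)^{T}$ factors through $G'=G/T$, and the compatibility $\pi\circ g=\bar g\circ\pi$ gives $g\cdot\pi^{-1}(Z')=\pi^{-1}(\bar g\cdot Z')$, so the isomorphism intertwines the natural $G'$-actions. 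Passing to $G'$-fixed loci therefore gives $\Hilb^{d}(A)^{G}=(\Hilb^{d}(A)^{T})^{G'}\cong\Hilb^{d/|T|}(A')^{G'}$ when $|T|\mid d$, while $\Hilb^{d}(A)^{G}\subseteq\Hilb^{d}(A)^{T}=\emptyset$ otherwise.

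Since isomorphic schemes have the same topological Euler characteristic, summing over $d$ gives
\[
Z_{A,G}(q)=\sum_{d\ge 0}e\bigl(\Hilb^{d}(A)^{G}\bigr)q^{d}=\sum_{m\ge 0}e\bigl(\Hilb^{m}(A')^{G'}\bigr)q^{|T|m}=Z_{A',G'}(q^{|T|}),
\]
whence $Z_{A,G}^{-1}(q)=Z_{A',G'}^{-1}(q^{|T|})$. As $(A',G')$ is symplectic and translation-free, Theorem \ref{thm:MainThm} identifies $Z_{A',G'}^{-1}$ with the eta product from Table \ref{table:TAB1}, and the substitution $q\mapsto q^{|T|}$ turns it into the asserted oldform. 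The one genuinely substantive point is the family-level \'etale descent in the third paragraph: this is what upgrades the easy bijection on $\C$-points into an isomorphism of schemes, hence into an equality of Euler characteristics; everything else is bookkeeping.
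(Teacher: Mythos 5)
Your proposal is correct and follows essentially the same route as the paper: use that $T$ acts freely so that $T$-invariant (hence $G$-invariant) subschemes have length divisible by $|T|$ and correspond, via the quotient $A\to A'=A/T$, to subschemes of $A'$, giving $\Hilb^{m|T|}(A)^{G}\cong\Hilb^{m}(A')^{G'}$ and hence the identity of generating functions, with the oldform statement following from Theorem \ref{thm:MainThm} for the translation-free pair $(A',G')$. The only difference is that you spell out the family-level \'etale descent establishing the isomorphism of (fixed) Hilbert schemes, which the paper asserts without detail.
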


\subsection{Refinement to $\chi_{y}$-genus}
We can refine our formulas by replacing the Euler characteristic with a more elaborate index.  For our purposes, we will focus on the (normalized) $\chi_{y}$-genus, which for a compact complex manifold $M$ is defined in terms of the Hodge numbers as
\begin{equation}\label{eqn:chiydefn}
\begin{split}
\overline{\chi}_{y}(M) & = (-y)^{-\frac{1}{2}\dim(M)} \chi_{y}(M)\\
& = (-y)^{-\frac{1}{2}\dim(M)} \sum_{p,q} (-1)^{p} h^{p,q}(M) y^{q}.
\end{split}
\end{equation}
Notice that setting $y=-1$ recovers the Euler characteristic, $\overline{\chi}_{-1}(M) = e(M)$.  

Our formulas will involve the function 
\begin{equation}\label{eqn:WJFphim21}
\phi_{-2,1}(q,y) = \big(y^{\frac{1}{2}}-y^{-\frac{1}{2}}\big)^2 \prod_{n=1}^{\infty} \frac{(1-yq^{n})^{2}(1-y^{-1}q^{n})^{2}}{(1-q^{n})^{4}}
\end{equation}
which is the unique weak Jacobi form of weight $-2$ and index $1$ \cite[Thm.\@ 9.3]{eichler_theory_1985}.  We define the generating function
\[Z^{\overline{\chi}}_{A,G}(q,y) = \sum_{d=0}^{\infty} \overline{\chi}_{y}(\Hilb^{d}(A)^{G})q^{d}.\]

\begin{prop}\label{prop:chiyrefinement}
For all non-trivial translation-free symplectic actions we have
\[Z^{\overline{\chi}}_{A,G}(q,y) = -\big(y^{\frac{1}{2}}+y^{-\frac{1}{2}}\big)^2 \frac{\Part(q)}{\phi_{-2,1}(q^{|G|},-y)}.\]
\end{prop}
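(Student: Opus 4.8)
\emph{Proof proposal.} The plan is to refine, from Euler characteristics to Hodge--Deligne polynomials, the factorisation of the generating function that underlies Theorem~\ref{thm:MainThm}. Let $U\subseteq A$ be the open locus on which $G$ acts freely, so that the coarse space $A/G$ is the disjoint union of the smooth surface $U/G$ and the finite set $\Sigma=\Sing(A/G)$ of quotient singularities, with isotropy $H_p\subseteq G$ of order $|G|/r_p$ at $p\in\Sigma$. Stratifying a $G$-invariant finite subscheme of $A$ by its support, one obtains a subscheme of $U/G$ (a free $G$-orbit carrying a length-$\ell$ structure contributes length $\ell|G|$ on $A$), together with, for each orbit of a singular point $p$, an $H_p$-invariant subscheme of the formal neighbourhood $\C^2$ of a point of that orbit, supported at that point (contributing length $\ell\,r_p$). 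Exactly as in the proof of Theorem~\ref{thm:MainThm} (following \cite{bryan_g-fixed_2020_TWO}), the generating function therefore factors, and since the $\chi_y$-genus (equivalently the Hodge--Deligne polynomial) is multiplicative over such product strata one gets
\[
Z^{\overline\chi}_{A,G}(q,y)=\Bigl(\sum_{\ell\geq 0}\overline\chi_y\bigl(\Hilb^\ell(U/G)\bigr)q^{\ell|G|}\Bigr)\cdot\prod_{p\in\Sigma}\Bigl(\sum_{\ell\geq 0}\overline\chi_y\bigl(\Hilb^\ell_0(\C^2)^{H_p}\bigr)q^{\ell r_p}\Bigr),
\]
and the same identity with $\overline\chi_y$ replaced by $e$ recovers the factorisation of $Z_{A,G}(q)$; in particular setting $y=-1$ is consistent.

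For the free factor I would apply the Hodge--Deligne refinement of G\"ottsche's formula, which expresses $\sum_\ell \overline\chi_y(\Hilb^\ell S)q^\ell$ for a smooth surface $S$ as an explicit infinite product determined by $E(S;u,v)$. The key point is that symplecticity makes $E(U/G)$ completely explicit: the induced $G$-action on $H^\ast(A)$ is linear and symplectic, hence factors through a non-trivial finite subgroup of $\SL(2,\C)$, so it has no invariants on $H^1(A)$ or $H^3(A)$ and acts trivially on $H^{2,0}(A)$ and $H^{0,2}(A)$. Thus the $G$-invariant part of $E(A)$ is $1+u^2+\rho_G\,uv+v^2+u^2v^2$ with $\rho_G=\dim H^{1,1}(A)^G=e(A/G)-4$, and $E(U/G)=\bigl(1+u^2+\rho_G\,uv+v^2+u^2v^2\bigr)-|\Sigma|$. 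Feeding this into the product formula and carrying out the $\chi_y$-specialisation (with the dimensional normalisation) gives
\[
\sum_{\ell\geq 0}\overline\chi_y\bigl(\Hilb^\ell(U/G)\bigr)q^{\ell|G|}=\prod_{m\geq1}(1+y^{-1}q^{|G|m})^{|\Sigma|-2}\,(1+yq^{|G|m})^{-2}\,(1-q^{|G|m})^{-\rho_G},
\]
which reduces to the free part of $Z_{A,G}(q)$ at $y=-1$ but is only symmetric in $y\leftrightarrow y^{-1}$ up to a correction reflecting that $U/G$ is non-compact.

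It remains to compute the local factors $\sum_\ell \overline\chi_y(\Hilb^\ell_0(\C^2)^{H_p})q^{\ell r_p}$ for the finitely many isotropy groups that occur. For an $A_{n-1}$-point $H_p$ is cyclic and conjugate into the maximal torus of $\SL(2,\C)$, so $\Hilb^\ell_0(\C^2)^{H_p}$ carries a residual torus action whose fixed points are the $H_p$-invariant monomial ideals; from the associated Bialynicki--Birula strata and the torus weights on their tangent spaces one reads off the (pure, Tate-type) Hodge--Deligne polynomial of $\Hilb^\ell_0(\C^2)^{H_p}$, hence its $\overline\chi_y$. For the remaining $D_4,D_5,E_6$-type points one argues the same way with the maximal torus centralising $H_p$, or imports the corresponding computation from \cite{bryan_g-fixed_2020_TWO}. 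Multiplying the free factor by all of these local factors, regrouping into eta products at each level dividing $|G|$, and comparing with $Z_{A,G}(q)$ from Table~\ref{table:TAB1}, the assertion reduces to
\[
Z^{\overline\chi}_{A,G}(q,y)=Z_{A,G}(q)\cdot\prod_{m\geq 1}\frac{(1-q^{|G|m})^4}{(1+yq^{|G|m})^2(1+y^{-1}q^{|G|m})^2},
\]
whose right-hand side equals $-(y^{1/2}+y^{-1/2})^2 Z_{A,G}(q)/\phi_{-2,1}(q^{|G|},-y)$ by the product expansion \eqref{eqn:WJFphim21} together with $\bigl((-y)^{1/2}-(-y)^{-1/2}\bigr)^2=-(y^{1/2}+y^{-1/2})^2$.

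The main obstacle is this last step: one must show that, level by level, \emph{all} of the $y$-dependence of the product over the quotient singularities, together with the $y\leftrightarrow y^{-1}$ asymmetry of the open free factor, cancels except for the single symmetric factor $\prod_m (1-q^{|G|m})^4\big/\bigl((1+yq^{|G|m})^2(1+y^{-1}q^{|G|m})^2\bigr)$ sitting at level $|G|$. This collapse is where the symplectic hypothesis is really used --- through the rigid shape of $E(A)^G$ above and through the holomorphic-symplectic geometry of the punctual Hilbert schemes $\Hilb^\ell_0(\C^2)^{H_p}$ at the ADE points --- and organising the bookkeeping so that the cancellation is visible uniformly, rather than as a case-by-case coincidence across the rows of Table~\ref{table:TAB1}, is the crux of the argument.
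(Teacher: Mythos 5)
Your proposal stops exactly where the proof would have to start. The stratification identity you write down (open free factor times punctual local factors) is fine in spirit, but everything then hinges on two things you do not supply. First, the refined local input: you need the Hodge--Deligne polynomials of the invariant \emph{punctual} Hilbert schemes $\Hilb^{\ell}_{0}(\C^{2})^{H_{p}}$ for every ADE stabilizer, and these are not available by the route you indicate --- Theorem 1.2 of \cite{bryan_g-fixed_2020_TWO} is an Euler-characteristic statement only, so there is nothing to ``import''; and the Bialynicki--Birula argument is not justified as stated, since $\Hilb^{\ell}_{0}(\C^{2})^{H_{p}}$ sits inside the \emph{singular} punctual Hilbert scheme (so it need not be smooth), while for the $D_{4}$, $D_{5}$, $E_{6}$ points only the one-dimensional scaling torus commutes with $H_{p}$ and its fixed loci are positive-dimensional. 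Second, and decisively, you yourself flag that the cancellation of all $y$-dependence of the local factors against the $y\leftrightarrow y^{-1}$ asymmetry of the open factor, leaving only the single level-$|G|$ factor $\prod_{m}(1-q^{|G|m})^{4}\big/\bigl((1+yq^{|G|m})^{2}(1+y^{-1}q^{|G|m})^{2}\bigr)$, is unproven. That cancellation \emph{is} the proposition; without it your final display is the statement to be shown, not a consequence of the computation. So as it stands the proposal is a programme with the crux missing, not a proof.

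The paper avoids this bookkeeping entirely by invoking the stronger, global result of Bryan--Gyenge (Theorem 1.10 of \cite{bryan_g-fixed_2020_TWO}): the generating function of \emph{birationality classes} of the components of $\Hilb^{d}(A)^{G}$ is determined, each component being birational to a Hilbert scheme of points on the minimal resolution $Y\to A/G$, which here is a $K3$ surface; the only adjustments are the definition of $\Zbir_{A,G}$ and the placement of the power of $q$ since $A$ is abelian rather than $K3$. Because Hodge numbers, hence $\overline{\chi}_{y}$, are birational (and deformation) invariants of compact holomorphic symplectic manifolds, the claimed formula then follows from the known $\overline{\chi}_{y}$-genera of $\Hilb^{n}(K3)$ multiplied against $\Part(q)$. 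In other words, the uniform ``collapse'' you identify as the main obstacle is exactly what the birational refinement of Bryan--Gyenge provides, and is why the paper's proof is two lines: if you want an independent argument along your lines, you would either have to reprove that birational statement or compute the refined local series at all ADE points and verify the cancellation by hand.
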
 

Following \cite{bryan_g-fixed_2020_TWO}, one can give similar formulas for the elliptic genus, the motivic class, and more generally, the birationality class, but we will not need those here.

\subsection{Enumerative Geometry of the Orbifold Kummer Surface}\label{IntroEnumGeom}

The Katz-Klemm-Vafa (KKV) formula \cite{katz_m_1999} was predicted by string theorists to compute the BPS states of D-branes moving in a $K3$ surface $S$.  

In its modern mathematical formulation, the Maulik-Toda proposal \cite{maulik_gopakumarvafa_2018} is applied to a (local) $K3$ surface to define BPS invariants $\mathsf{n}_{\beta}^{K3}(g)$ for each effective curve class $\beta$.  These quantities, which we interpret as virtual counts of curves of geometric genus $g$ in the class $\beta$, only depend on $\beta$ through the self-intersection $\beta^{2}=2d-2$, so we denote them by $\mathsf{n}_{d}^{K3}(g)$.  The KKV formula is then
\begin{equation}\label{eqn:KKVFormula}
\begin{split}
\sum_{d=0}^{\infty}\sum_{g=0}^{\infty} \mathsf{n}_{d}^{K3}(g)(y^{\frac{1}{2}}+y^{-\frac{1}{2}})^{2g}q^{d-1} & = - (y^{\frac{1}{2}}+y^{-\frac{1}{2}})^{2} \frac{1}{\Delta(q) \phi_{-2,1}(q,-y)}\\
& = \frac{1}{q} \prod_{n=1}^{\infty} \frac{1}{(1-q^{n})^{20}(1+yq^{n})^{2}(1+y^{-1}q^{n})^{2}}.
\end{split}
\end{equation}
The coefficient of $q^{d-1}$ in the formula is $\overline{\chi}_{y}(\Hilb^{d}(S))$, so the KKV formula relates the $\chi_{y}$-genera of $\Hilb(S)$ and virtual counts of curves on a $K3$ surface.  It has now been proven in full \cite{Pandharipande:2014qoa}.  

In this paper we prove an analogue of the KKV formula for the \emph{orbifold Kummer surface} $\Amod$ by introducing a notion of $\tau$-BPS states.  Here $A$ is a polarized Abelian surface of type $(1, d)$ with $\beta_{d} \in H_{2}(A, \Z)$ the class of the primitive polarization\footnote{Throughout, we must handle $d=0$ separately.  In this case, choose the product Abelian surface $A=E \times F$, with $\beta_{0}$ the class of $E \times \{\text{pt}\}$.} and 
\[
\tau: A \to A
\]
is the involution $a \mapsto -a$, which we will call the standard involution.  

As with ordinary BPS invariants, consider the moduli space $M_{A}(0, \beta_{d}, 1)$ of Simpson stable sheaves on $A$ with Chern character $(0, \beta_{d}, 1) \in H^{2*}(A,\Z)$, and generic polarization.  The $\tau$ action lifts canonically to $M_{A}(0, \beta_{d}, 1)$ by pullback.   

The following is the Abelian surface-version of the fact that for a $K3$ surface $S$, a moduli space of stable sheaves with primitive Mukai vector and generic polarization is deformation equivalent to a Hilbert scheme of points on $S$.  
\begin{prop}\label{prop:KeyEnummPropp}
There exists a $\tau$-equivariant deformation equivalence 
\begin{equation}\label{eqn:keyDefEq}
M_{A}(0, \beta_{d}, 1) \to \widehat{A} \times \Hilb^{d}(A)
\end{equation}
where $\tau$ acts on both sides by pullback, and $\widehat{A} = \Pic^{0}(A)$ is the dual Abelian variety.  
\end{prop}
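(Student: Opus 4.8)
The plan is to deduce the statement from Yoshioka's structure theory for moduli of sheaves on abelian surfaces and then to upgrade that argument so that every step is carried out $\tau$-equivariantly. Since $\td(A)=1$, the Mukai vector of the sheaves parametrized by $M_A(0,\beta_d,1)$ is just their Chern character $v=(0,\beta_d,1)$; it is primitive, and $\langle v,v\rangle=\beta_d^2=2d$, which is positive for $d\geq 1$ (the case $d=0$ is elementary: on the product surface $A=E\times F$ both sides reduce to $E\times F$ with $\tau$ acting by inversion on each factor). Hence for a generic polarization $H$ the coarse space $M_A(v,H)$ is a smooth projective variety of dimension $\langle v,v\rangle+2=2d+2$, and by Yoshioka it is deformation equivalent to $M_A(v',H')$ for any primitive Mukai vector $v'$ with $\langle v',v'\rangle=2d$; for $v'=(1,0,-d)$, whose stable sheaves are $L\otimes\I_Z$ with $L\in\Pic^{0}(A)$ and $Z\subset A$ of length $d$, one has $M_A(v',H')\cong\widehat A\times\Hilb^{d}(A)$. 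In Yoshioka's proof this deformation equivalence is realized by a finite chain of operations of three types: (a) isomorphisms of moduli spaces induced by a Fourier--Mukai equivalence $D^{b}(A)\simeq D^{b}(\widehat A)$ --- needed at least once to change the rank from $0$ to $1$, after which one is on the dual surface; (b) isomorphisms induced by tensoring with a line bundle; and (c) deformations of the abelian surface together with its polarization across walls of the ample cone (under which the birational Albanese-fibred holomorphic symplectic moduli spaces remain deformation equivalent), including a final deformation of $\widehat A$ back to $A$ using that all abelian surfaces are deformation equivalent.

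Next I would record the equivariance input. The involution $\tau=[-1]_A$ acts trivially on $H^{2*}(A,\Z)$, hence on $\NS(A)$; therefore it fixes every Mukai vector, it preserves the ample cone together with all of its walls, and a $\tau$-invariant generic polarization exists (every polarization is $\tau$-invariant as a numerical class). Since Simpson stability of the pure one-dimensional sheaves here depends on the polarization only through $c_1(H)$, the pullback $F\mapsto\tau^{*}F$ carries $H$-stable sheaves to $H$-stable sheaves and defines the $\tau$-action on $M_A(v,H)$, and likewise on every moduli space in the chain; on $\Hilb^{d}(-)$ the action is $Z\mapsto\tau(Z)$, and on $\Pic^{0}(-)$ it is inversion since $[-1]^{*}L\cong L^{-1}$ for $L\in\Pic^{0}$. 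For the Poincar\'e bundle $\mathcal P$ on $A\times\widehat A$ one has $([-1]_A\times[-1]_{\widehat A})^{*}\mathcal P\cong\mathcal P$, so the Fourier--Mukai functor $\Phi_{\mathcal P}$ intertwines $\tau_A^{*}$ with $\tau_{\widehat A}^{*}$, as does its quasi-inverse; twisting by a symmetric line bundle is manifestly $\tau$-equivariant, and every $\NS$-class contains a symmetric line bundle because $\Pic^{0}$ is divisible. Finally, all numerical data governing the wall-crossings and the flop combinatorics in (c) lie in $\NS(A)$ and are therefore $\tau$-fixed.

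With these in hand one runs Yoshioka's chain equivariantly: step (a) becomes a $\tau$-equivariant isomorphism $M_A(v,H)\cong M_{\widehat A}(\widehat v,\widehat H)$ for a rank-one Mukai vector $\widehat v$ of square $2d$ and a suitable generic $\widehat H$; a deformation across the walls of the ample cone of $\widehat A$ (which is $\tau$-equivariant because all the wall-crossing data is $\NS$-data, hence $\tau$-fixed) together with a twist by a symmetric line bundle (type (b)) yields a $\tau$-equivariant deformation equivalence with $M_{\widehat A}\big((1,0,-d)\big)\cong\Pic^{0}(\widehat A)\times\Hilb^{d}(\widehat A)$; and finally, spreading this out over a connected family $\widehat{\mathcal A}\times_{B}\Hilb^{d}(\mathcal A/B)$ of products, equipped with its relative inversion, over a base $B$ connecting $\widehat A$ back to $A$ inside the moduli of abelian surfaces produces a $\tau$-equivariant deformation equivalence with $\widehat A\times\Hilb^{d}(A)$, matching up the two inversions. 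Composing the whole chain gives the desired $\tau$-equivariant deformation equivalence.

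The main obstacle is the word ``$\tau$-equivariant'' itself: a deformation equivalence is a smooth proper family over a connected base with two prescribed fibres, and to make it equivariant one must construct a $\tau$-action on the total space of that family restricting to the given actions on both ends. The real content is therefore to check that each of Yoshioka's \emph{relative} constructions --- the relative moduli space over a one-parameter family of polarized abelian surfaces, the relative birational contractions and flops produced by wall-crossing, and the Fourier--Mukai comparison in families --- can be organized over a base that itself carries a compatible $\tau$-action (relative inversion on the abelian surfaces, trivial on the polarizations and wall parameters). This is exactly where triviality of $[-1]_A$ on cohomology is essential: all the lattice-theoretic and numerical input is automatically $\tau$-fixed, so none of the choices obstruct a $\tau$-action, and the $(-1,-1)$-symmetry of the Poincar\'e kernel makes the Fourier--Mukai functor descend equivariantly. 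The remaining work --- tracking the equivariant structures through the relevant WIT conditions and through the identification of fibres of the Albanese maps --- is then routine.
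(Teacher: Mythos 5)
Your ``equivariance layer'' is exactly the one the paper uses: $\tau$ acts trivially on $H^{2*}(A,\Z)$ and hence fixes all Mukai vectors, polarizations and wall data; a Poincar\'e kernel is invariant under the product/diagonal involution; families of polarized abelian surfaces carry a fiberwise inversion; and the rank-one identification $M_{A}(1,0,-d)\cong\widehat{A}\times\Hilb^{d}(A)$ via $\mathcal{F}=L_{0}\otimes\I_{Z}$ is canonical and equivariant. However, the chain you propose to run equivariantly is not the chain that is actually available, and its first link is a genuine gap: you assert that the \emph{absolute} Fourier--Mukai equivalence $D^{b}(A)\simeq D^{b}(\widehat{A})$ with kernel the Poincar\'e bundle induces an isomorphism of moduli spaces $M_{A}(0,\beta_{d},1)\cong M_{\widehat{A}}(\widehat{v},\widehat{H})$ for a generic polarization. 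Preservation of Gieseker stability under $\Phi_{\mathcal{P}}$ on an arbitrary $(1,d)$-polarized abelian surface is not an off-the-shelf fact: the transform of a stable pure one-dimensional sheaf need not be concentrated in a single degree, and even when it is, stability with respect to a prescribed generic polarization on $\widehat{A}$ requires proof. This is precisely the difficulty that Yoshioka's argument sidesteps, so you cannot simply cite ``Yoshioka's chain'' for this step; likewise the wall-crossing/flop bookkeeping you include is machinery the actual argument never needs (with $v$ primitive and $H$ generic there are no strictly semistable sheaves, and no wall-crossing is invoked).

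The paper's route, which you should substitute for your step (a) and (c), is: first deform the pair $(A,\beta_{d})$ to a product $A_{0}=E\times F$ with class $\sigma+df$; by Yoshioka's Proposition 4.12 this gives deformation equivalences $M_{A}(1,0,-d)\to M_{A_{0}}(1,0,-d)$ and $M_{A_{0}}(0,\sigma+df,1)\to M_{A}(0,\beta_{d},1)$, and these are $\tau$-equivariant because the standard involution acts fiberwise on the family of polarized abelian surfaces. Then, staying on the single surface $A_{0}$, use the \emph{relative} Fourier--Mukai transform along the elliptic fibration $A_{0}\to E$, with kernel the relative Poincar\'e bundle $P$ on $A_{0}\times_{E}A_{0}$; since $P=\mathcal{O}(D+D'-\Delta)$ with $D$, $D'$, $\Delta$ all invariant under the diagonal involution, $\Phi_{P}$ commutes with $\tau^{*}$, and by Yoshioka's Theorem 3.15 (primitivity excluding strictly semistables) it induces a $\tau$-equivariant isomorphism $M_{A_{0}}(1,0,-d)\cong M_{A_{0}}(0,\sigma+df,1)$. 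Composing with the equivariant rank-one lemma gives the proposition with only two citable inputs, both visibly equivariant, whereas your version would additionally require you to prove stability preservation for the absolute transform and to organize an equivariant wall-crossing argument that the problem does not actually demand.
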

\noindent This is essentially a result of Yoshioka \cite{yoshioka_moduli_2001}.  Our observation is simply that his correspondence is $\tau$-equivariant, and we prove this in Section \ref{proofofKeyEnummPropp}.  

An immediate corollary is the following.  
\begin{cor}\label{YoshCorr}
Restricting to the $\tau$-invariant locus, we get a component-wise deformation equivalence
\begin{equation}
M_{A}(0, \beta_{d}, 1)^{\tau} \to \coprod_{i=1}^{16} \Hilb^{d}(A)^{\tau}.
\end{equation}
In particular, each component of $M_{A}(0, \beta_{d}, 1)^{\tau}$ is a smooth holomorphic symplectic variety of $K3$-type.  
\end{cor}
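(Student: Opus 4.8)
The plan is to restrict the $\tau$-equivariant deformation equivalence of Proposition~\ref{prop:KeyEnummPropp} to $\tau$-fixed loci, which requires first pinning down the induced $\tau$-action on the factor $\widehat{A}$. Since $\tau=[-1]\colon A\to A$ is a group homomorphism, pullback on degree-zero line bundles is the dual homomorphism $\widehat{\tau}\colon\widehat{A}\to\widehat{A}$, which is again multiplication by $-1$; in other words $\tau$ acts on $\widehat{A}=\Pic^{0}(A)$ as the standard involution. Its fixed locus is the $2$-torsion subgroup scheme $\widehat{A}[2]$, which over $\C$ is reduced of length $16$, a disjoint union of $16$ points. Therefore
\[
\big(\widehat{A}\times\Hilb^{d}(A)\big)^{\tau}\;=\;\widehat{A}^{\tau}\times\Hilb^{d}(A)^{\tau}\;=\;\coprod_{i=1}^{16}\Hilb^{d}(A)^{\tau}.
\]

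Next I would pass to fixed loci inside the interpolating family. The equivalence of Proposition~\ref{prop:KeyEnummPropp} is realized by a smooth proper family $\mathcal{M}\to B$ over a connected base carrying a fiberwise $\langle\tau\rangle$-action, with two fibers $\tau$-equivariantly identified with $M_{A}(0,\beta_{d},1)$ and $\widehat{A}\times\Hilb^{d}(A)$. Because we work over $\C$ and $\langle\tau\rangle$ is finite, the fixed locus $\mathcal{M}^{\tau}$ is smooth and the induced map $\mathcal{M}^{\tau}\to B$ is smooth and proper; each of its connected components interpolates the corresponding components of the two special fibers. This produces the desired component-wise deformation equivalence
\[
M_{A}(0,\beta_{d},1)^{\tau}\;\longrightarrow\;\coprod_{i=1}^{16}\Hilb^{d}(A)^{\tau}.
\]
Finally, since every connected component of $\Hilb^{d}(A)^{\tau}$ is, as recalled in the introduction, a smooth projective holomorphic symplectic variety of $K3$-type, and these properties are invariant under deformation equivalence, the same follows for every connected component of $M_{A}(0,\beta_{d},1)^{\tau}$.

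The main obstacle is the middle step: one must be sure that a $\tau$-equivariant deformation equivalence genuinely descends to the $\tau$-fixed loci, i.e.\ that the fixed locus of an order-$2$ action on a smooth proper interpolating family is again smooth and proper over the base (the relative version of the standard fact that fixed loci of finite group actions on smooth varieties in characteristic $0$ are smooth, together with the observation that connectedness of $B$ is inherited by the components of $\mathcal{M}^{\tau}$). Granting that the family constructed in the proof of Proposition~\ref{prop:KeyEnummPropp} can be taken $\tau$-equivariant — which is exactly the content of that proposition — the remainder is formal.
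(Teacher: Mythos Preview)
Your proposal is correct and follows the same approach as the paper, which simply declares this an ``immediate corollary'' of Proposition~\ref{prop:KeyEnummPropp} without spelling out the details. Your identification of the $\tau$-action on $\widehat{A}$ as the standard involution (hence $\widehat{A}^{\tau}$ consisting of $16$ points) and the passage to fixed loci in the $\tau$-equivariant interpolating family are exactly the content the paper leaves implicit.
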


The ordinary Hilbert-Chow morphism is $\tau$-equivariant, so we can restrict to the invariant locus
\[\pi_{d} : M_{A}(0, \beta_{d}, 1)^{\tau} \to \Chow_{\beta_{d}}(A)^{\tau}.\]
which is a disjoint union of Lagrangian fibrations.  In Section \ref{Yau-ZaslowCompJac} we apply the Maulik-Toda proposal\footnote{The definition of the BPS invariants by Maulik-Toda applies to Calabi-Yau threefolds.  But in the case of a local Calabi-Yau surface, the theory reduces to a theory of sheaves on the surface, see Section \ref{subsec:GVIntro}.  Our results are therefore intrinsic to $[A/\tau]$.} to this map in order to define $\tau$-BPS invariants $\ndh$ of $\Amod$ (see Definition \ref{defn:GVinvofOKS}).  We interpret $\ndh$ as the virtual number of $\tau$-invariant curves in the class $\beta_{d}$ in $A$, whose quotient has geometric genus $h$.  Equivalently, $\ndh$ is a virtual count of genus $h$ curves on the orbifold.  

\begin{rmk}\label{rmk:noTwSec}
Our results to follow are coarse in the sense that the invariants $\ndh$ do not individually track the geometric genus of the $\tau$-invariant curves in $A$.  This is because we do not fully probe the K-theory of the orbifold $\Amod$.  In work in progress with J. Bryan \cite{bryan_counting_nodate} we give the refined formula, as well as propose a general framework defining equivariant BPS invariants on a Calabi-Yau threefold with an involution.
\end{rmk}

The following is an analogue of the KKV formula for the orbifold Kummer surface. 
\begin{thm}\label{prop:KKVSYProp}
The $\tau$-BPS invariants $\ndh$ (Definition \ref{defn:GVinvofOKS}) are determined by
\begin{equation}
\begin{split}
\frac{1}{16}\sum_{d=0}^{\infty} \sum_{h=0}^{\infty} \ndh \big(y^{\frac{1}{2}}+y^{-\frac{1}{2}}\big)^{2h}q^{d} & = -\big(y^{\frac{1}{2}}+y^{-\frac{1}{2}}\big)^{2} \frac{ Z_{A,\tau}(q) }{ \phi_{-2,1}(q^{2}, -y)}\\
& = \prod_{n=1}^{\infty} \frac{(1-q^{2n})^{12}}{(1-q^{n})^{16}(1+q^{2n}y)^{2}(1+q^{2n}y^{-1})^{2}}.
\end{split}
\end{equation}
\end{thm}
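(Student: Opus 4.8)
The strategy is to reduce the left-hand side to the $\chi_y$-genus generating function $Z^{\overline{\chi}}_{A,\tau}(q,y)$ using the definition of the $\tau$-BPS invariants together with the ``Perverse $=$ Hodge'' symmetry for Lagrangian fibrations, and then to invoke Proposition~\ref{prop:chiyrefinement} and the eta-product of Table~\ref{table:TAB1}.

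\textbf{Step 1: from $\ndh$ to the $\chi_y$-genus of $M_{A}(0,\beta_{d},1)^{\tau}$.} By Corollary~\ref{YoshCorr}, each of the $16$ summands of $M_{A}(0,\beta_{d},1)^{\tau}$ is a smooth projective holomorphic symplectic variety of $K3$-type, and $\pi_{d}$ restricts on each summand to a Lagrangian fibration. By Definition~\ref{defn:GVinvofOKS}, the invariants $\ndh$ are read off from the perverse filtration of $R\pi_{d*}\mathbb{Q}$. The ``Perverse $=$ Hodge'' symmetry of Shen and Yin for Lagrangian fibrations of compact irreducible holomorphic symplectic manifolds identifies, bigraded piece by bigraded piece, the dimensions of the associated graded of the perverse filtration with the Hodge numbers of the total space; hence the perverse-refined BPS generating series of $\pi_{d}$ is exactly the normalized $\chi_y$-genus of $M_{A}(0,\beta_{d},1)^{\tau}$. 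Since the $\chi_y$-genus is additive over connected components and deformation invariant for smooth projective families, Corollary~\ref{YoshCorr} gives
\[
\sum_{h=0}^{\infty}\ndh\big(y^{\frac12}+y^{-\frac12}\big)^{2h} \;=\; \overline{\chi}_{y}\big(M_{A}(0,\beta_{d},1)^{\tau}\big) \;=\; 16\,\overline{\chi}_{y}\big(\Hilb^{d}(A)^{\tau}\big),
\]
so that $\tfrac{1}{16}\sum_{d,h}\ndh(y^{\frac12}+y^{-\frac12})^{2h}q^{d} = \sum_{d}\overline{\chi}_{y}(\Hilb^{d}(A)^{\tau})q^{d} = Z^{\overline{\chi}}_{A,\tau}(q,y)$.

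\textbf{Step 2: apply Proposition~\ref{prop:chiyrefinement} and expand.} The action of $\langle\tau\rangle\cong\Z_{2}$ is non-trivial, translation-free and symplectic (entry No.~$2$ of Table~\ref{table:TAB1}, with $|G|=2$), so Proposition~\ref{prop:chiyrefinement} yields
\[
Z^{\overline{\chi}}_{A,\tau}(q,y) \;=\; -\big(y^{\frac12}+y^{-\frac12}\big)^{2}\,\frac{Z_{A,\tau}(q)}{\phi_{-2,1}(q^{2},-y)},
\]
which is the first displayed equality of the theorem. For the product form, substitute $Z_{A,\tau}^{-1}(q)=\eta^{16}(q)/\eta^{8}(q^{2})$ from Table~\ref{table:TAB1}, i.e.\ $Z_{A,\tau}(q)=\prod_{n\geq1}(1-q^{2n})^{8}/(1-q^{n})^{16}$, and use \eqref{eqn:WJFphim21} together with $\big((-y)^{\frac12}-(-y)^{-\frac12}\big)^{2}=-\big(y^{\frac12}+y^{-\frac12}\big)^{2}$ to write $\phi_{-2,1}(q^{2},-y)=-\big(y^{\frac12}+y^{-\frac12}\big)^{2}\prod_{n\geq1}(1+yq^{2n})^{2}(1+y^{-1}q^{2n})^{2}/(1-q^{2n})^{4}$. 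The two copies of $\big(y^{\frac12}+y^{-\frac12}\big)^{2}$ cancel and the eta-factors combine to give $\prod_{n\geq1}(1-q^{2n})^{12}/\big[(1-q^{n})^{16}(1+yq^{2n})^{2}(1+y^{-1}q^{2n})^{2}\big]$, as claimed.

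\textbf{Main obstacle.} The delicate step is the identification in Step~1: one must be sure that the Maulik--Toda perverse-filtration definition of $\ndh$ genuinely outputs the $\chi_y$-genus of the total space, which rests on the full bigraded ``Perverse $=$ Hodge'' symmetry for Lagrangian fibrations of irreducible holomorphic symplectic manifolds (and, if that definition is phrased conditionally, on the decomposition/support hypotheses for $\pi_{d}$, which follow from the $K3$-type structure of $M_{A}(0,\beta_{d},1)^{\tau}$ established in Corollary~\ref{YoshCorr}). This is precisely the mechanism by which the Maulik--Toda proposal reproduces the refined KKV formula \eqref{eqn:KKVFormula} on a $K3$ surface; everything after Step~1 is a formal manipulation of eta products and the weak Jacobi form $\phi_{-2,1}$.
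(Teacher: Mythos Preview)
Your proof is correct and follows essentially the same route as the paper: apply the Shen--Yin ``perverse $=$ Hodge'' result (packaged in the paper as Corollary~\ref{cor:Shen-YinCOR}) component-by-component to identify $\MT(\pi_d)$ with $\overline{\chi}_{y}(M_{A}(0,\beta_d,1)^{\tau})$, use Corollary~\ref{YoshCorr} and deformation invariance to extract the factor of $16$, and then invoke Proposition~\ref{prop:chiyrefinement} with the entry for $G=\Z_2$ in Table~\ref{table:TAB1}. Your Step~2 also spells out the infinite-product manipulation that the paper leaves to the reader.
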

We prove this in Section \ref{ShenYinproof} using the work of Shen-Yin \cite{shen_topology_2019} on perverse Hodge numbers of Lagrangian fibrations to relate the Maulik-Toda polynomial to $\overline{\chi}_{y}(\Hilb(A)^{\tau})$.  We then apply our Proposition \ref{prop:chiyrefinement} which determines the $\chi_{y}$-genera.   

\begin{table}[h]
\centering
{\tabulinesep=1.6mm
\begin{tabu}{|| c || c | c | c | c | c | c | c | c ||}
\hline
$\frac{1}{16}\ndh$ & $d=0$ & $1$ & $2$ &  $3$ & $4$ & $5$ & $6$ & 7 \\
\hline \hline
$h=0$ & $1$ & 16 & 144 & 960 & 5264 & 25056 & 106944 & 418176 \\
\hline
$1$ & $0$ & $0$ & -2 & -32 & -294 & -2016 & -11400 & -56000 \\
\hline
$2$ & $0$ & $0$ & 0 & 0 & 3 & 48 & 448 & 3136 \\
\hline
$3$ & $0$ & $0$ & 0 & 0 & 0 & 0 & -4 & -64 \\
\hline
$4$ & $0$ & $0$ & 0 & 0 & 0 & 0 & 0 & 0 \\
\hline
\end{tabu}}
\caption{}
\label{table:TAB3}
\end{table}

We now want to specialize to counting rational curves.  Making the specialization $y=-1$ in the KKV formula (\ref{eqn:KKVFormula}) results in
\[\sum_{d=0}^{\infty} \mathsf{n}_{d}^{K3}(0) q^{d-1} = \frac{1}{\Delta(q)}.\]
This is the Yau-Zaslow formula---one of the earliest and most foundational results in modern enumerative geometry \cite{yau_bps_1996}.  It is a relationship between rational curves in $K3$ surfaces, modular forms, and Hilbert schemes of points.  One remarkable feature of the Yau-Zaslow formula is that the invariants $\mathsf{n}_{d}^{K3}(0)$ give actual (not virtual) counts of rational curves.  For the mathematical formulation of the theory, see \cite{bryan_enumerative_1999, beauville_counting_1999_TWO, fantechi_euler_1999}.  

In this spirit, we give an enumerative interpretation of our partition function $Z_{A, \tau}$ from Table \ref{table:TAB1} where $G = \langle \tau \rangle \cong \Z_{2}$.  Consistent with the perverse sheaf and vanishing cycle formalism of Maulik-Toda, Katz had previously defined the genus zero BPS invariants as Behrend-weighted Euler characteristics of the Simpson stable moduli space \cite{katz_genus_2008}.  

In the case of $\Amod$, we apply this to the space $M_{A}(0, \beta_{d}, 1)^{\tau}$, which is a disjoint union of smooth holomorphic symplectic varieties of $K3$-type.  Therefore, the Behrend weighting is trivial, and
\begin{equation}
\ndo = e\big(M_{A}(0, \beta_{d}, 1)^{\tau} \big).
\end{equation}

The following is an analogue of the Yau-Zaslow formula for $\Amod$, which we will prove in Section \ref{ratlcurveproof}.

\begin{thm}\label{thm:MainEmThm}
The genus zero $\tau$-BPS invariants $\ndo$ are determined by
\begin{equation}
\frac{1}{16} \sum_{d=0}^{\infty} \ndo q^{d} = Z_{A,\tau}(q) = \prod_{n=1}^{\infty} \frac{(1-q^{2n})^{8}}{(1-q^{n})^{16}}.
\end{equation}
Moreover, if $A$ is a $(1,d)$-polarized Abelian surface of Picard rank one and $\beta_{d}$ is the unique primitive generator, then $\ndo$ is a weighted count of rational curves on $\Amod$.  Specifically,
\begin{equation}\label{eqn:compJacform}
\ndo = \sum_{C \in \Pi} e(\CompJac (C)^{\tau})
\end{equation}
where $\Pi$ is the finite set of $\tau$-invariant curves in the class $\beta_{d}$ with rational quotient, and $\CompJac(C)$ is the compactified Jacobian of the integral curve $C$.  
\end{thm}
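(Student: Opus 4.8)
\emph{Overview and the generating function.} I would split the statement into the generating-function identity — essentially formal given what is already proved — and the enumerative interpretation, which requires a stratification of the $\tau$-fixed moduli space along its support morphism together with a $\tau$-equivariant vanishing theorem for compactified Jacobians. For the first part, begin from $\ndo = e\big(M_{A}(0,\beta_{d},1)^\tau\big)$, valid because each component of $M_{A}(0,\beta_{d},1)^\tau$ is smooth (Corollary~\ref{YoshCorr}), so its Behrend function is identically $1$. By the same corollary there is a component-wise deformation equivalence $M_{A}(0,\beta_{d},1)^\tau \simeq \coprod_{i=1}^{16}\Hilb^{d}(A)^\tau$, and $e$ is deformation-invariant, so $\ndo = 16\, e(\Hilb^{d}(A)^\tau)$. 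Summing against $q^{d}$ and invoking Theorem~\ref{thm:MainThm} (the $\Z_{2}$ row of Table~\ref{table:TAB1}) gives $\tfrac{1}{16}\sum_{d\ge 0}\ndo\, q^{d} = Z_{A,\tau}(q) = \eta^{8}(q^{2})/\eta^{16}(q)$; the prefactors $q^{2/3}$ in numerator and denominator cancel, leaving $\prod_{n\ge 1}(1-q^{2n})^{8}/(1-q^{n})^{16}$. (Alternatively this drops out of Theorem~\ref{prop:KKVSYProp} at $y=-1$: the factor $(y^{1/2}+y^{-1/2})^{2}$ kills every $h>0$ term and also cancels the matching factor hidden in $\phi_{-2,1}(q^{2},-y)$.)

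\emph{Stratifying the $\tau$-fixed moduli space.} Next, fix a $(1,d)$-polarized $A$ of Picard rank one with $\beta_{d}$ the primitive generator of $\NS(A)$, and observe that every effective curve in the class $\beta_{d}$ is integral: $\beta_{d}$ is primitive and spans a rank-one lattice, so it is neither a sum of two effective classes nor divisible, and hence no section of a line bundle $L$ with $c_{1}(L)=\beta_{d}$ can cut out a reducible or non-reduced curve. Consequently the standard description of one-dimensional sheaf moduli (Simpson, Le Potier) presents $M_{A}(0,\beta_{d},1)$, via the support morphism, as a $\tau$-equivariant family over $\Chow_{\beta_{d}}(A)$ whose fibre over $[C]$ is the compactified Jacobian $\CompJac(C)$ of rank-one torsion-free sheaves of $\chi=1$; here $\tau$ acts on the base by $C\mapsto\tau(C)$ and on $\CompJac(C)$, for $\tau$-invariant $C$, through the induced involution $\tau_{C}=\tau|_{C}$. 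Passing to fixed loci, the fibre of $\pi_{d}$ over $[C]\in\Chow_{\beta_{d}}(A)^\tau$ is $\CompJac(C)^\tau$. Since the topological Euler characteristic is additive over locally closed decompositions and multiplicative in families with constant fibre, a stratification of $\Chow_{\beta_{d}}(A)^\tau$ over which $[C]\mapsto e(\CompJac(C)^\tau)$ is constant yields
\[
\ndo \;=\; e\big(M_{A}(0,\beta_{d},1)^\tau\big) \;=\; \sum_{[C]\in\Chow_{\beta_{d}}(A)^\tau} e\big(\CompJac(C)^\tau\big),
\]
the right-hand side being the Euler-characteristic integral of the constructible function $[C]\mapsto e(\CompJac(C)^\tau)$.

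\emph{The equivariant vanishing and finiteness of $\Pi$.} It remains to establish the $\tau$-equivariant analogue of the vanishing theorem of Beauville \cite{beauville_counting_1999_TWO} and Fantechi--G\"ottsche--van Straten \cite{fantechi_euler_1999}: for a $\tau$-invariant integral $C\subset A$ one has $e(\CompJac(C)^\tau)=0$ unless $\overline C = C/\tau_{C}$ has geometric genus $0$. The mechanism: pulling back line bundles along $q\colon C\to\overline C$ gives an action of $\Pic^{0}(\overline C)$ on $\CompJac(C)^\tau$ (each $q^{*}N$ is $\tau_{C}$-fixed), and this action has finite stabilizers because $q^{*}\colon \Pic^{0}(\overline C)\to\Pic^{0}(C)$ has finite kernel; if $\overline C$ has positive geometric genus then $e(\Pic^{0}(\overline C))=0$, and stratifying $\CompJac(C)^\tau$ by orbit type exhibits every stratum as a fibration with fibre an étale quotient of $\Pic^{0}(\overline C)$, hence of vanishing Euler characteristic, so $e(\CompJac(C)^\tau)=0$. (In the non-equivariant case this is Beauville's argument with $\Pic^{0}(C)$ in place of $\Pic^{0}(\overline C)$; the point is that the $\tau$-invariant directions in $\Pic^{0}(C)$ are exactly the pullbacks from $\overline C$, the Prym directions acting through $[-1]$ and contributing only finitely many translates to the fixed locus.) The surviving curves — those $C$ with $\overline C$ rational, equivalently those whose normalization carries a degree-two map to $\P^{1}$ compatible with $\tau_{C}$ — form the set $\Pi$, whose finiteness follows from the Picard-rank-one hypothesis by a dimension count on the equigeneric strata inside the projective spaces $\P\big(H^{0}(A,L)_{\pm}\big)$ over the sixteen $\tau$-symmetric classes $L$ with $c_{1}(L)=\beta_{d}$ that comprise $\Chow_{\beta_{d}}(A)^\tau$. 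Feeding this back into the displayed identity gives $\ndo=\sum_{C\in\Pi}e(\CompJac(C)^\tau)$, the sum automatically ranging over curves from all sixteen classes — the source of the $\tfrac{1}{16}$ and consistent with Corollary~\ref{YoshCorr}.

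\emph{The main obstacle.} The heart of the proof is the equivariant vanishing lemma together with the finiteness of $\Pi$: one must analyze the $\tau$-action on the compactified Jacobian of a possibly singular integral curve whose involution $\tau_{C}$ may have fixed points, pin down the Prym-versus-pullback splitting of $\Pic^{0}(C)^{\tau_{C}}$ on the fixed locus, and control the local structure at the surviving (hyperelliptic-normalization) curves well enough to read off genuine — and, in the Yau--Zaslow spirit, positive — weights $e(\CompJac(C)^\tau)$. By comparison, Part~1 and the $\tfrac{1}{16}$ bookkeeping are routine.
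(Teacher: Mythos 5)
Your first part and the overall architecture of your second part coincide with the paper's own proof: smoothness of the components of $M_{A}(0,\beta_{d},1)^{\tau}$ makes the Behrend weighting trivial, Corollary \ref{YoshCorr} and deformation invariance give $\ndo = 16\, e(\Hilb^{d}(A)^{\tau})$, and the eta product comes from Table \ref{table:TAB1}; for the enumerative statement the paper likewise identifies $M_{A}(0,\beta_{d},1)$ with the relative compactified Jacobian over $\Chow_{\beta_{d}}(A)$, integrates Euler characteristics of the $\tau$-fixed fibers over $\Chow_{\beta_{d}}(A)^{\tau}$, and reduces everything to a $\tau$-equivariant Beauville-type vanishing (its Lemma \ref{lemma:BeauLemmAbSurf}) plus finiteness of $\Pi$. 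So the strategy is the same; the issue is with how you argue the vanishing, which you yourself identify as the heart of the matter but only sketch.

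The concrete gap is your claim that the $\Pic^{0}(\overline{C})$-action on $\CompJac(C)^{\tau}$ (via $F \mapsto F \otimes q^{*}N$, with $\overline{C}=C/\tau$ possibly singular) has finite stabilizers "because $q^{*}$ has finite kernel." Finiteness of $\ker q^{*}$ controls the group homomorphism, not the stabilizers on the boundary of the compactified Jacobian: a sheaf that fails to be locally free at a singular point is stabilized by the affine ($\mathbb{G}_m$-type) directions of $\Pic^{0}(C)$ attached to that point, and these can meet $q^{*}\Pic^{0}(\overline{C})$ in a positive-dimensional subgroup. For instance, if $C$ has a $\tau$-conjugate pair of nodes $p,\tau(p)$ lying over a node $\bar{p}$ of $\overline{C}$, the $\mathbb{G}_m \subset \Pic^{0}(\overline{C})$ at $\bar{p}$ pulls back to the diagonal $\mathbb{G}_m \subset \mathbb{G}_m^{2}$, which fixes every $\tau$-invariant sheaf not locally free at both nodes. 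Hence your strata are not fibered in \'etale quotients of $\Pic^{0}(\overline{C})$ as asserted; the conclusion could be repaired (stabilizers are affine, so orbits still fiber over a positive-dimensional abelian variety when $\overline{C}$ has positive geometric genus), but that extra argument is missing and is exactly the delicate point. The paper avoids it by passing to normalizations: Beauville's splitting and his freeness lemma give a genuinely free action of $\Jac(\widetilde{C})^{\tau}$, hence of the positive-dimensional abelian variety $\Jac(\widetilde{C/\tau})$, on $\CompJac(C)^{\tau}$, and $e(\CompJac(C)^{\tau})=0$ follows since cyclic subgroups of every order act freely. Two smaller points: your finiteness of $\Pi$ is left as a vague dimension count, whereas the paper gets it in one line because the K3 surface $A/\tau$ cannot contain a positive-dimensional family of rational curves; and your parenthetical that the $\tau$-invariant directions of $\Pic^{0}(C)$ are "exactly" the pullbacks from $\overline{C}$ is only correct up to finite index and after normalizing, which again is what the paper's route through $\widetilde{C}$ and $\widetilde{C/\tau}$ makes precise.
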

Just as the Yau-Zaslow formula does for $K3$ surfaces, this theorem relates rational curves on the orbifold $[A/\tau]$ to the modular form $Z_{A, \tau}^{-1}$ and hence, to Hilbert schemes of points on $[A/\tau]$ as well.

We note that in the case of a $K3$ surface with a symplectic action by a finite cyclic group, in \cite{zhan_counting_2021} they are interested in enumerating orbits of curves with rational quotient. Though the perspective is somewhat different from us: they view Euler characteristics as a representation, whereas we take ordinary Euler characteristics of the invariant locus.

\subsection{Hyperelliptic Curve Counting Invariants}
Our formula can equivalently be interpreted as a weighted count of hyperelliptic curves.  A study of the enumerative geometry of hyperelliptic curves in Abelian surfaces was initiated in \cite{rose_counting_2012}, and fully solved in \cite{bryan_curve_2018}. 
\begin{defn}
For a polarized Abelian surface $A$ of type $(1,d)$, let $\hdg$ be the finite number of geometric genus $g$ hyperelliptic curves in the class of the polarization such that all Weierstrass points lie at a $2$-torsion point of $A$.  
\end{defn}  
\begin{rmk}\label{rmk:CountRMK}
Every hyperelliptic curve in $A$ can be translated so that all Weierstrass points lie at $2$-torsion points, in which case the curve becomes $\tau$-invariant.  Therefore, $\hdg$ are actual (not virtual) counts of $\tau$-invariant curves with rational quotient.  
\end{rmk}
By \cite[Prop.\ 4]{bryan_curve_2018}, the $\hdg$ are computed from the formula\footnote{In \cite[Sec. 5.4]{bryan_curve_2018} what we are calling $\hdg$ was denoted $\textsf{h}_{g, \beta}^{A,\Hilb}$.}
\begin{equation}\label{eqn:BOPYref}
\sum_{d=0}^{\infty} \sum_{g=1}^{\infty} \hdg (w^{\frac{1}{2}}+w^{-\frac{1}{2}})^{2g+2} q^{d} = 4 \, \phi^{2}_{-2, 1}(q,-w)
\end{equation}
where $\phi_{-2,1}$ is the weak Jacobi form defined in (\ref{eqn:WJFphim21}).  In \cite{bryan_curve_2018}, it is tacitly assumed that $d >0$, but the formula correctly encodes the remaining invariant.  If $d=0$, the only non-vanishing invariant obtained from the formula is $\textsf{h}_{0}(1)=4$ which represents the four invariant genus one curves in the surface $A = E \times F$ in the class of $E \times \{ \text{pt} \}$, each of which have rational quotient.    

The relationship between $Z_{A, \tau}$ and the formula (\ref{eqn:BOPYref}) arises by making the specialization $w=1$.  We have the straightforward identity of infinite products
\[\frac{ \eta(q^{2})^{8}}{\eta(q)^{16}} = \frac{1}{16} \phi_{-2, 1}^{2}(q, -1)\]
and note that the lefthand side is precisely $Z_{A, \tau}$.  This along with (\ref{eqn:BOPYref}) and the first claim of Theorem \ref{thm:MainEmThm} immediately implies the following result.  
\begin{prop}\label{prop:HilbBOPY}
For all $d \geq 0$, we have
\[\ndo = \sum_{g=1}^{d+1} \hdg 2^{2g}.\]
\end{prop}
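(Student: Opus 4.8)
The plan is to derive Proposition~\ref{prop:HilbBOPY} purely as a formal comparison of the two generating series, with no new geometric input. By the first displayed identity of Theorem~\ref{thm:MainEmThm}, $\frac{1}{16}\sum_{d\ge 0}\ndo\,q^{d} = Z_{A,\tau}(q)$. By the stated infinite-product identity, $Z_{A,\tau}(q) = \frac{\eta(q^{2})^{8}}{\eta(q)^{16}} = \frac{1}{16}\,\phi_{-2,1}^{2}(q,-1)$. Combining these, $\sum_{d\ge 0}\ndo\,q^{d} = \phi_{-2,1}^{2}(q,-1)$. On the other hand, the Bryan--Oberdieck--Pandharipande--Yin formula~\eqref{eqn:BOPYref} reads $\sum_{d\ge 0}\sum_{g\ge 1}\hdg(w^{1/2}+w^{-1/2})^{2g+2}q^{d} = 4\,\phi_{-2,1}^{2}(q,-w)$, and specializing $w=1$ gives $(w^{1/2}+w^{-1/2})^{2g+2}\mapsto 2^{2g+2}$, hence $\sum_{d\ge 0}\big(\sum_{g\ge 1}\hdg 2^{2g+2}\big)q^{d} = 4\,\phi_{-2,1}^{2}(q,-1)$, i.e. $\sum_{d\ge 0}\big(\sum_{g\ge 1}\hdg 2^{2g}\big)q^{d} = \phi_{-2,1}^{2}(q,-1)$.

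The two series $\sum_{d\ge 0}\ndo\,q^{d}$ and $\sum_{d\ge 0}\big(\sum_{g\ge 1}\hdg 2^{2g}\big)q^{d}$ are therefore equal, so comparing coefficients of $q^{d}$ yields $\ndo = \sum_{g\ge 1}\hdg 2^{2g}$ for each $d\ge 0$. The only remaining point is to justify truncating the inner sum at $g = d+1$: this is a genus bound. A $\tau$-invariant curve in the class $\beta_{d}$ of a $(1,d)$-polarized Abelian surface has arithmetic genus $p_{a} = \frac{1}{2}\beta_{d}^{2}+1 = d+1$ (since $\beta_{d}^{2} = 2d$ for a type $(1,d)$ polarization), and the geometric genus of any (partial) normalization is at most the arithmetic genus; equivalently, the hyperelliptic count $\hdg$ vanishes for $g > d+1$, which one reads off directly from the fact that the right-hand side $4\phi_{-2,1}^{2}(q,-w)$, expanded in $q$, has its $q^{d}$-coefficient a Laurent polynomial in $w^{1/2}$ of bounded degree corresponding to $g\le d+1$. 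So the equality becomes the finite sum $\ndo = \sum_{g=1}^{d+1}\hdg 2^{2g}$ as claimed.

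I do not anticipate a serious obstacle here: the whole argument is an elementary manipulation of power series together with the already-established identities (the product formula for $Z_{A,\tau}$ from Theorem~\ref{thm:MainEmThm}, the $\eta$-product identity $\frac{\eta(q^{2})^{8}}{\eta(q)^{16}} = \frac{1}{16}\phi_{-2,1}^{2}(q,-1)$, and the BOPY formula~\eqref{eqn:BOPYref}). The one place requiring a line of care is the specialization $w=1$ in $(w^{1/2}+w^{-1/2})^{2g+2}$: one should note this is a genuine evaluation (no poles appear, the expression is a Laurent polynomial in $w^{1/2}$), so the specialized series is well-defined termwise, and likewise $\phi_{-2,1}(q,-w)$ specializes to $\phi_{-2,1}(q,-1)$ without issue. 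If one wishes to be fully self-contained about the upper limit $d+1$, the cleanest route is to invoke Remark~\ref{rmk:CountRMK}: $\hdg$ counts $\tau$-invariant curves in the primitive class with rational quotient, whose normalization has geometric genus $g$, and such a curve sits inside a curve of arithmetic genus $d+1$, forcing $g\le d+1$.
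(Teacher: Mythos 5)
Your argument is correct and is essentially the paper's proof: the paper likewise combines the first claim of Theorem~\ref{thm:MainEmThm}, the infinite-product identity $\eta(q^{2})^{8}/\eta(q)^{16}=\tfrac{1}{16}\phi_{-2,1}^{2}(q,-1)$, and the $w=1$ specialization of the BOPY formula~\eqref{eqn:BOPYref}, then compares $q$-coefficients. Your extra care about the evaluation at $w=1$ and the truncation of the genus sum at $g=d+1$ only makes explicit what the paper leaves implicit.
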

\noindent The invariants $\ndo$ are therefore less refined, as they do not individually track the geometric genus $g$ (see Remark \ref{rmk:noTwSec}).  

If $A$ has Picard rank one (except for the case of $d=0$) then by (\ref{eqn:compJacform}) we have
\[
\sum_{C \in \Pi} e(\CompJac (C)^{\tau}) = \sum_{g=1}^{d+1} \hdg 2^{2g}
\]
But $| \Pi | = \sum_{g} \hdg$, so we regard this as strong evidence for the following conjecture.
\begin{conj}\label{conj:CompJacConj}
If $A$ is an Abelian surface of Picard rank one, and $C \subset A$ is an integral $\tau$-invariant curve of geometric genus $g$ with rational quotient, then 
\[
e(\CompJac (C)^{\tau}) = 2^{2g}.
\]  
\end{conj}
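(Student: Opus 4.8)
The plan is to directly analyze the $\tau$-action on the compactified Jacobian $\CompJac(C)$ of an integral $\tau$-invariant curve $C \subset A$ with rational quotient $\bar{C} = C/\tau$. Since $C$ is integral of geometric genus $g$ and $\bar{C} \cong \mathbb{P}^{1}$, the normalization $\nu : \tilde{C} \to C$ realizes $\tilde{C}$ as a hyperelliptic curve of genus $g$ (with its hyperelliptic involution being the lift of $\tau$), and $C$ has arithmetic genus $p_{a}(C) = g + \delta$ where $\delta$ is the total $\delta$-invariant of the singularities. The first step is to fix a $\tau$-fixed point of $\CompJac(C)$ to trivialize the torsor — for instance the structure sheaf $\OO_{C}$ itself, which is $\tau$-invariant — so that $\tau$ acts on $\CompJac(C)$ by $\F \mapsto \tau^{*}\F$, an honest automorphism with $e(\CompJac(C)^{\tau})$ equal to the Euler characteristic of its fixed locus.

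Next I would stratify $\CompJac(C)$ by the singularities of $C$ and reduce to a local computation. Away from the $\tau$-fixed singular points, $\tau$ freely permutes points of $C$ in pairs, and the corresponding contributions to the fixed locus are controlled by pullback along $\bar{C} = \mathbb{P}^{1}$; since $\tau$ acts freely there, these strata contribute in a manner mirroring the $K3$ case — I expect a Jacobian-of-$\mathbb{P}^{1}$ type collapse. The essential arithmetic comes from the $\tau$-fixed points on $C$: these are exactly the images of the $2g+2$ Weierstrass points of $\tilde{C}$ together with any $\tau$-fixed singularities. The key assertion to extract is that $e(\CompJac(C)^{\tau})$ factors as a product over these fixed points of local contributions, each of which is $2$, so that $e(\CompJac(C)^{\tau}) = 2^{(\text{number of fixed contributions})} = 2^{2g}$. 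Concretely, the cleanest route is an induction on $\delta$: when $C$ is smooth, $\CompJac(C) = \Jac(\tilde{C})$ is an abelian variety of dimension $g$ with $\tau$ acting as $-1$, whose fixed locus is the $2^{2g}$ two-torsion points, giving the base case; then each time one degenerates a smooth point to a node (or more general singularity) fixed by $\tau$, one analyzes the local model of the compactified Jacobian — a chain of $\mathbb{P}^{1}$'s or a more general projective curve of genus equal to the local $\delta$-invariant — and checks its $\tau$-fixed Euler characteristic contributes the compensating factor. The global statement then follows from multiplicativity of $e$ over a $\tau$-equivariant stratification.

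A complementary and perhaps more robust approach is to use Theorem \ref{thm:MainEmThm} and Proposition \ref{prop:HilbBOPY} as a consistency anchor: we know $\sum_{C \in \Pi} e(\CompJac(C)^{\tau}) = \sum_{g} \hdg 2^{2g}$ and $|\Pi| = \sum_{g} \hdg$, so it suffices to show the local contribution $e(\CompJac(C)^{\tau})$ depends only on the geometric genus $g$ of $C$ (not on the particular singularity configuration) — deformation invariance within an equisingular-plus-smoothing family would then force the value to be $2^{2g}$ by matching against the smooth hyperelliptic locus. This requires a $\tau$-equivariant version of the fact that $e(\CompJac(C))$ is a deformation invariant in families of integral curves, which one could obtain from the Maulik-Toda / Shen-Yin machinery already invoked in Section \ref{ShenYinproof}, restricted to the $\tau$-equivariant Lagrangian fibration $\pi_{d}$.

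The main obstacle I anticipate is controlling the $\tau$-action on the compactified Jacobian at a $\tau$-fixed \emph{non-nodal} singularity: for nodes the local model is well understood (a nodal $\mathbb{P}^{1}$, with $\tau$ either fixing or swapping the branches), but for higher cusps or tacnodes one needs an explicit description of how $\tau$ acts on rank-one torsion-free sheaves supported near the singularity, and whether the fixed-locus Euler characteristic still equals the naive $2^{2\delta_{\text{loc}}}$ count expected from the local "genus". Proving the local contribution is insensitive to the singularity type — equivalently, that only the \emph{geometric} genus matters — is the crux; I would attack it either by an explicit case analysis over the finitely many singularity types that can occur on a curve in a $(1,d)$-polarized abelian surface of Picard rank one, or by the deformation-invariance argument sketched above, which sidesteps the local analysis at the cost of needing equivariant constructibility results for $\CompJac$ in families.
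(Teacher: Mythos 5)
There is a genuine gap --- and in fact the statement you are trying to prove is left as a \emph{conjecture} in the paper: the paper only establishes it for smooth curves (Section \ref{subsection:pf of conj for sm curves}), via the Prym sequence $0 \to \Jac(C/\tau) \to \Jac(C) \to \text{Prym}(\alpha) \to 0$, on which the pullback action is trivial on $\Jac(C/\tau)$ and the standard involution on the Prym; with rational quotient this gives exactly the $2^{2g}$ two-torsion points. Your induction base case is precisely this smooth-curve argument, so up to that point you agree with the paper. Everything beyond it, however, is a strategy outline rather than a proof, and you flag the crux yourself: the action of $\tau$ on rank-one torsion-free sheaves near a $\tau$-fixed non-nodal singularity is not analyzed, and no mechanism is given for why only the geometric genus (and not the singularity configuration or the $\delta$-invariants) controls $e(\CompJac(C)^{\tau})$. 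Note also that the ``product of local factors of $2$ over fixed points'' heuristic is arithmetically inconsistent even in the smooth case: a smooth hyperelliptic curve of genus $g$ has $2g+2$ Weierstrass points, so a naive factor of $2$ per fixed point would give $2^{2g+2}$, not $2^{2g}$; whatever the correct local-to-global statement is, it cannot be a literal multiplicativity over $\tau$-fixed points of $C$.

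The ``consistency anchor'' route does not close the gap either. Theorem \ref{thm:MainEmThm} together with Proposition \ref{prop:HilbBOPY} gives only the aggregate identity $\sum_{C \in \Pi} e(\CompJac(C)^{\tau}) = \sum_{g} \hdg\, 2^{2g}$, and $|\Pi| = \sum_{g} \hdg$; this is consistent with the conjectured values but does not determine the individual summands unless you first prove that $e(\CompJac(C)^{\tau})$ depends only on $g$, which is exactly the hard step. The $\tau$-equivariant deformation invariance you would need (an equivariant analogue of the constancy of $e(\CompJac)$ in families of integral curves, or an equivariant refinement of the Shen--Yin/Maulik--Toda input) is asserted but not proved, and within a fixed Picard-rank-one surface you cannot connect a singular $\tau$-invariant member to a smooth one of the same geometric genus in the same class, so some family over a varying base would have to be constructed and its equivariant constructibility established. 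In short: your proposal correctly identifies the obstacles and reproduces the paper's smooth-case result, but it does not prove the conjecture, and the paper does not claim to either.
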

\noindent In Section \ref{subsection:pf of conj for sm curves} we will prove that the conjecture holds in the case of smooth curves.  
 
\begin{rmk}
One should ask if there are similar enumerative interpretations of $e(\Hii)$ for the remaining groups in Table \ref{table:TAB1}.  To our knowledge, this breaks down outside of $G \cong \Z_{2}$ because the key deformation equivalence (\ref{eqn:keyDefEq}) is not $G$-equivariant.  Of course, one can directly study $e(M_{A}(0, \beta_{d}, 1)^{G})$ for the remaining $G$, but we do not pursue that here.  
\end{rmk}

\subsection*{Acknowledgements} I would like to thank my advisor Jim Bryan for his guidance and many helpful suggestions throughout this project, as well as the anonymous referee for their important feedback.


\section{Symplectic Actions on Abelian Surfaces}\label{FujikiSec}

\subsection{Preliminaries}\label{sympactions}
Let $X$ be a complex torus of arbitrary dimension, with $0 \in X$ the origin.  The group of biholomorphisms from $X$ to itself is denoted $\Aut(X)$, while the subgroup of linear maps (automorphisms of $X$ as a complex Lie group) is denoted $\Autz(X) \subset \Aut(X)$.  Given any $x \in X$, let $t_{x}: X \to X$ be the biholomorphism translating by $x$.

Given any holomorphic map $f: X \to X'$ between complex tori, using that $f$ is equivalent to a map between the corresponding universal covers, one can show that $h := t_{-f(0)} \circ f$ is linear.  Therefore, holomorphic maps between complex tori can be uniquely factored as a linear map composed with a translation 
\[f = t_{f(0)} \circ h.\]
This factorization induces a canonical surjective group homomorphism 
\[\sigma : \Aut(X) \to \Autz(X)\]
mapping $f$ to $h$, which restricts to the identity on $\Autz(X) \subset \Aut(X)$ and whose kernel is the subgroup of all translations of $X$.  This proves the following.
\begin{prop}\label{autdec}
The biholomorphism group of a complex torus $X$ decomposes as $\Aut(X) = \Autz(X) \rtimes X$, where $X$ is identified with the subgroup of translations.  
\end{prop}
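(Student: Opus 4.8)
The plan is to deduce the semidirect product decomposition formally from the structure already in place: the canonical surjective homomorphism $\sigma : \Aut(X) \to \Autz(X)$ whose kernel is the translation subgroup, together with the splitting furnished by the inclusion $\Autz(X) \subset \Aut(X)$.

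First I would note that the translations of $X$ form a subgroup $T \leq \Aut(X)$ and that $x \mapsto t_{x}$ is a group isomorphism $X \isom T$, since $t_{x} \circ t_{y} = t_{x+y}$ and $t_{0} = \id$. By the construction of $\sigma$ we have $T = \ker \sigma$, so $T$ is normal in $\Aut(X)$. To pin down the conjugation action, I would compute directly: writing $f = t_{f(0)} \circ h$ with $h = \sigma(f) \in \Autz(X)$, so that $f^{-1}(y) = h^{-1}(y - f(0))$, one gets $f \circ t_{x} \circ f^{-1} = t_{h(x)} = t_{\sigma(f)(x)}$. Thus conjugation by $f$ acts on $T \isom X$ through $\sigma(f)$, i.e.\ via the tautological linear action of $\Autz(X)$ on $X$; this identifies the twisting in the semidirect product.

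It then remains to verify the two internal semidirect product conditions for the pair $(T, \Autz(X))$. First, $T \cap \Autz(X) = \{\id\}$: a translation that is linear must fix $0$, hence is $t_{0} = \id$. Second, $T \cdot \Autz(X) = \Aut(X)$: this is precisely the unique factorization $f = t_{f(0)} \circ \sigma(f)$ recalled above. Equivalently, the short exact sequence $1 \to T \to \Aut(X) \xrightarrow{\sigma} \Autz(X) \to 1$ splits, since $\sigma$ restricts to the identity on $\Autz(X) \subset \Aut(X)$, and a split short exact sequence of groups realizes the middle term as an internal semidirect product. Transporting along $x \mapsto t_{x}$ yields $\Aut(X) = \Autz(X) \rtimes X$ with $X$ the normal factor of translations and $\Autz(X)$ acting in the standard way, which is the statement.

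I do not anticipate a genuine obstacle: all the substance lies in the factorization $f = t_{f(0)} \circ \sigma(f)$ and in $\sigma$ being a group homomorphism, both of which are established in the discussion preceding the statement. The only steps requiring any care — and they are routine — are checking that $x \mapsto t_{x}$ is a homomorphism onto a normal subgroup and carrying out the short conjugation computation $f \circ t_{x} \circ f^{-1} = t_{\sigma(f)(x)}$, which fixes the precise form of the semidirect product.
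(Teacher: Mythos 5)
Your argument is correct and follows essentially the same route as the paper: the unique factorization $f = t_{f(0)} \circ \sigma(f)$ and the homomorphism $\sigma$, which restricts to the identity on $\Autz(X)$ and has the translations as kernel, yield the (split) extension and hence the semidirect product. Your explicit verification of $f \circ t_{x} \circ f^{-1} = t_{\sigma(f)(x)}$ and of the internal semidirect product conditions simply spells out details the paper leaves implicit.
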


Given a subgroup $G \subseteq \Aut(X)$, we get an action of $G$ on $X$ by biholomorphisms.  We will consider actions up to the following equivalence condition.
\begin{defn}
Consider pairs $(X_{i}, G_{i})$ for $i=1,2$ with $G_{i} \subseteq \Aut(X_{i})$.  We say the two pairs are \emph{equivalent} if there exists a biholomorphism $w:X_{1} \to X_{2}$ such that $G_{2} = w G_{1} w^{-1}$ in $\Aut(X_{2})$.  
\end{defn}

We specialize to the case where $X$ is a 2-dimensional complex torus.  A non-zero class $\alpha \in H^{2,0}(X)$ is called a \emph{holomorphic symplectic form}.  Since $h^{2,0}(X)=1$, a holomorphic symplectic form is unique up to scale.  
\begin{defn}
An automorphism $f \in \Aut(X)$ is holomorphic symplectic, or just \emph{symplectic}, if $f$ preserves a holomorphic symplectic form $\alpha$.  That is, if $f^{*} \alpha = \alpha$.  An action by a group $G$ on $X$ is \emph{symplectic} if each element of $G$ defines a symplectic automorphism.    
\end{defn}
\begin{lemma}\label{extensionlem}
A group $G$ with a symplectic action on $X$ can be written uniquely as the (possibly non-split) extension
\begin{equation}\label{eqn:SESsigma}
0 \to T \to G \to \sigma(G) \to 0
\end{equation}
where $T \subseteq G$ is the subgroup of all elements acting by translation, and the induced action by $\sigma(G)$ is symplectic and linear.
\end{lemma}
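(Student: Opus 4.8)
The plan is to obtain the extension by restricting to $G$ the canonical homomorphism $\sigma : \Aut(X) \to \Autz(X)$ produced in Proposition \ref{autdec}. First I would set $T := G \cap \ker(\sigma)$; since $\ker(\sigma)$ is precisely the group of all translations of $X$, this $T$ is exactly the subgroup of elements of $G$ acting by translation, and it is normal in $G$ because $\ker(\sigma)$ is normal in $\Aut(X)$. Restricting $\sigma$ then yields a surjective homomorphism of $G$ onto $\sigma(G) \subseteq \Autz(X)$ with kernel $T$, hence the (in general non-split) short exact sequence
\[0 \to T \to G \to \sigma(G) \to 0.\]
Uniqueness needs no real argument: $T$ is intrinsically determined as the translation subgroup of $G$, and the quotient map is forced to be the restriction of the canonical map $\sigma$, so this sequence is canonically attached to the pair $(X, G)$.

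It then remains to check that the induced $\sigma(G)$-action on $X$ is symplectic and linear. Linearity is immediate, as $\sigma(G) \subseteq \Autz(X)$ and the elements of $\Autz(X)$ are by definition automorphisms of $X$ as a complex Lie group. For the symplectic condition, fix $f \in G$ and use the factorization $f = t_{f(0)} \circ h$ with $h := \sigma(f) \in \Autz(X)$ recalled just before Proposition \ref{autdec}. On a $2$-dimensional complex torus every holomorphic $2$-form is translation-invariant, being a constant-coefficient form on the universal cover; equivalently, a translation is homotopic to $\id_{X}$ and so acts trivially on $H^{*}(X, \C)$. In either case $t_{f(0)}^{*}$ is the identity on $H^{2,0}(X)$, so that $f^{*} = h^{*} \circ t_{f(0)}^{*} = h^{*}$ on $H^{2,0}(X)$. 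Therefore, if $f^{*}\alpha = \alpha$ for a holomorphic symplectic form $\alpha$, then also $h^{*}\alpha = \alpha$, i.e.\ $\sigma(f)$ is symplectic. Since $\sigma(G)$ is generated by the $\sigma(f)$ for $f \in G$, the induced action of $\sigma(G)$ is symplectic.

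I expect no genuine obstacle here: once Proposition \ref{autdec} is in hand the statement is essentially formal, and the only step drawing on geometry rather than pure group theory is the translation-invariance of $H^{2,0}(X)$ used above. If one wished to record more, the extension class lives in the group cohomology $H^{2}(\sigma(G), T)$ (with $T$ abelian and $\sigma(G)$ acting by conjugation), but this refinement plays no role in the sequel.
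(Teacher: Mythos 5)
Your proposal is correct and follows essentially the same route as the paper: restrict the canonical homomorphism $\sigma$ to $G$ so that the kernel is the translation subgroup, and deduce that $\sigma(f)$ is symplectic from the fact that the holomorphic symplectic form is translation-invariant (constant-coefficient on the universal cover). The only difference is cosmetic—you spell out the group-theoretic formalities (normality of $T$, canonicity of the sequence, the extension class in $H^{2}(\sigma(G),T)$) that the paper leaves implicit.
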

\begin{pf}
The existence of the short exact sequence is clear from the definition of $\sigma$.  Given $f \in G$ we can write $f=t_{f(0)} \circ h$, and then $\sigma(f) = h$, which acts linearly.  The kernel is precisely elements of $G$ acting by translation.  Since the symplectic form can be taken to be constant---induced from $dz_{1}dz_{2}$ on the universal cover---it is clearly invariant under translations.  Therefore, $h = t_{-f(0)} \circ f$ is symplectic.   
\end{pf}

This proof illustrates the obstruction to the splitting of the extension.  We have the unique factorization $f=t_{f(0)} \circ h$, but notice $t_{f(0)}$ might not be an element of $G$.  The extension splits if and only if $t_{f(0)} \in G$ for all $f \in G$.  

When $T$ is trivial, we say the action is \emph{translation-free}.  Note that a translation-free action is not necessarily linear: if $T$ is trivial, $G$ and $\sigma(G)$ are abstractly isomorphic, but may act differently on $X$.  We say that a translation-free action by $G$ is \emph{maximal} if there does not exist a translation-free group $H \subset \Aut(X)$ with $G \subsetneq H$ such that the $G$ action is the restriction of the $H$ action.

\subsection{Fujiki's Classification}

Fujiki has given a complete classification of symplectic actions by finite groups on two-dimensional complex tori \cite{fujiki_finite_1988}\footnote{This was equivalently carried out from a physics perspective in \cite{volpato_symmetries_2014_TWO} by studying symmetry groups of certain non-linear sigma models on the underlying real torus $T^{4}$.}.  The goal of this section is to condense the relevant results of Fujiki into a brief survey.  Below, all actions are assumed to be symplectic, and we will narrow our focus to Abelian surfaces, even though the results apply also to non-algebraic tori.  

By Lemma 3.3 of \cite{fujiki_finite_1988}, the only groups with non-trivial linear actions on an Abelian surface $A$ are (see Introduction for definitions) isomorphic to one of
\[ \Z_{2}, \,\, \Z_{3}, \,\, \Z_{4}, \,\, \Z_{6}, \,\, \mathcal{Q}, \,\, \mathcal{D}, \,\, \mathcal{T}.\]
All Abelian surfaces carry a unique linear action by $\langle \tau \rangle \cong \Z_{2}$ where $\tau : A \to A$ is the standard involution.  Hence, $A$ admits a symplectic linear $\Z_{3}$ action if and only if it does so for $\Z_{6}$.  Then it suffices to study $G$ isomorphic to one of $\Z_{4}, \Z_{6}, \mathcal{Q}, \mathcal{D}, \mathcal{T}$.  If we let $\overline{G}=G/\langle \tau \rangle$, then we know from \cite[Sec.\ 4]{volpato_symmetries_2014_TWO} that the five isomorphism classes of $\overline{G}$ are precisely all subgroups of $W^{+}(E_{8})$ which pointwise fix a lattice of rank at least $4$.  

One should understand which Abelian surfaces carry a linear action by a particular group.  For the two cyclic groups $\Z_{4}$ and $\Z_{6}$, the full description of which tori admit such actions is given in \cite[Prop.\@ 3.7]{fujiki_finite_1988}.  In particular, for all elliptic curves $E$, the product $E \times E$ admits an action by $\Z_{4}$ and $\Z_{6}$.  These linear actions by cyclic groups, including $\Z_{2}$ and $\Z_{3}$, correspond to Nos.\@ 2$-$5 in Table \ref{table:TAB1}.  

We now discuss the non-cyclic case.  Let $\HH \cong \RR^{4}$ denote the real quaternions.  The space of complex structures on $\HH$ is
\[M = \big\{ J \in \HH \, \big| \, J^{2} = -1 \big\} \cong \P^{1}\]
and can be identified with the imaginary unit quaternions.  Fujiki associates to $G \cong \mathcal{Q}, \mathcal{D}, \mathcal{T}$ a lattice $\Lambda_{G} \subset \HH$ and forms the real torus $T_{G} = \HH/\Lambda_{G}$.  The space $M$ parameterizes complex structures on $T_{G}$ such that the group of units $\Lambda_{G}^{\times} \cong G$ induces a holomorphic linear $G$ action on the complex torus.  The lattices and their groups of units can be found in (and just above) Lemma 2.6 of Fujiki.  

By Theorem 3.11 of Fujiki, all maximal linear actions by each $G$ arise in this way, up to equivalence.  These correspond to Nos.\@ 6, 9, and 10 in Table \ref{table:TAB1}.  There is a non-maximal linear $\mathcal{Q}$ action corresponding to the restriction of the maximal $\mathcal{T}$ action to the unique normal subgroup $\mathcal{Q} \subset \mathcal{T}$.  This is No.\@ 7 in the Table.  

In a very similar manner, Section 3.4 of Fujiki describes and classifies all non-linear translation-free actions.  Only $\mathcal{T}$ can act maximally as such, which corresponds to No.\@ 11 in the Table.  But we can restrict to $\mathcal{Q} \subset \mathcal{T}$ giving a non-maximal non-linear action of $\mathcal{Q}$.  This is No.\@ 8 in the Table.  

In all cases, the complex tori admitting a translation-free action by a non-cyclic group are parameterized by $M$.  Those that are algebraic are of a special form.  
\begin{defn}
A \emph{singular Abelian surface} is an Abelian surface $A$ whose Neron-Severi lattice $\NS(A)$ has rank 4, its largest possible value.  Equivalently, $A$ is a product $E \times F$ of isogenous elliptic curves with complex multiplication.  
\end{defn} 
The following result combines Lemma 5.6 and Proposition 5.7 of Fujiki.
\begin{prop}\label{FujikiPropp}
If $A$ admits a translation-free action by $G \cong \mathcal{Q}, \mathcal{D}, \mathcal{T}$ then $A$ is a singular Abelian surface.  Moreover, $A$ corresponds to a complex structure $J \in M$ such that $\mu J \in \Lambda_{G}$ for some real number $\mu \neq 0$, which depends on $J$.  
\end{prop}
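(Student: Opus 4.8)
The plan is to reprise Fujiki's Lemma 5.6 and Proposition 5.7 of \cite{fujiki_finite_1988}, organized around the quaternionic model already in play. By Theorem 3.11 and Section 3.4 of \cite{fujiki_finite_1988} (as summarized above), every complex torus carrying a translation-free action by $G \cong \mathcal{Q}, \mathcal{D}, \mathcal{T}$ is, up to equivalence, of the form $X_{J} = \HH/\Lambda_{G}$, where the complex structure $I_{J}$ is right multiplication by a unit imaginary quaternion $J \in M$ and $G$ acts by left multiplication by units of $\Lambda_{G}$; in the non-maximal cases $\mathcal{Q} \subset \mathcal{T}$ one keeps the same torus and restricts the action. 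First I would note that left multiplication by any $\lambda \in \Lambda_{G}$ preserves $\Lambda_{G}$ and commutes with $I_{J}$, so that $\Lambda_{G}$ embeds into $\End(X_{J})$ and the algebra $B = \Lambda_{G} \otimes \Q$ embeds into $\End(X_{J}) \otimes \Q$; since $B \otimes_{\Q} \R = \HH$, the algebra $B$ is a \emph{definite} quaternion algebra over $\Q$, so $X_{J}$ always carries definite quaternionic multiplication. The task is then to decide, in terms of $J$, when $X_{J}$ is algebraic and what its Picard number is.

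Next I would compute $\NS_{\Q}(X_{J}) = H^{1,1}(X_{J}) \cap H^{2}(X_{J}, \Q)$, viewing $H^{2}(X_{J}, \Q)$ as the alternating $\Q$-valued forms on $\Lambda_{G}$ and using the Euclidean form $\langle v, w \rangle = \Re(\bar v w)$ on $\HH$. For a pure quaternion $\beta$ the alternating form $E_{\beta}(v, w) = \langle \beta v, w \rangle$ is of Hodge type $(1,1)$ for $I_{J}$, because $\langle xJ, yJ \rangle = \langle x, y \rangle$; and for $\beta \in \Lambda_{G}$ it is $\Q$-valued on $\Lambda_{G}$, since the reduced norm form of $B$ is. A short Pfaffian computation shows $E_{\beta} \cup E_{\beta'}$ is a \emph{negative} multiple of $\langle \beta, \beta' \rangle$, so the $E_{\beta}$ span a negative-definite rational subspace $N \subseteq \NS_{\Q}(X_{J})$ with $\dim N = 3$. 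Since a complex $2$-torus is K\"ahler with $p_{g} = 1$, the cup form on $H^{1,1}_{\R}(X_{J})$ has signature $(1,3)$ inside $H^{2}(X_{J}, \R)$, so the orthogonal complement of $N$ inside $H^{1,1}_{\R}(X_{J})$ is a positive line; a mixed-Pfaffian identity shows it is spanned by the K\"ahler form $\omega_{0}(v, w) = \langle vJ, w \rangle$ of the flat metric. Thus $H^{1,1}_{\R}(X_{J}) = N_{\R} \oplus \R\omega_{0}$ orthogonally, and $\NS_{\Q}(X_{J})$ has rank $3$ or $4$: it is $4$ exactly when $\omega_{0}$ is rational, in which case $\omega_{0}$ is a polarization and $X_{J}$ is a singular Abelian surface; and it is $3$ otherwise, in which case $\NS_{\Q}(X_{J}) = N$ is negative-definite and $X_{J}$ is not algebraic. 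In particular $X_{J}$ is algebraic precisely when $\rho(X_{J}) = 4$, which gives the first assertion.

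It remains to match the rationality of $\omega_{0}$ with the lattice condition. Writing $\omega_{0}(v, w) = \langle vJ, w \rangle = \langle \bar v w, J \rangle$ for $v, w \in \Lambda_{G}$ and using that $\bar v w$ runs over $\Lambda_{G}$, the class $\omega_{0}$ is $\Q$-valued on $\Lambda_{G}$ if and only if $\langle \lambda, J \rangle \in \Q$ for every $\lambda \in \Lambda_{G}$, which says that $J$ lies in the $\Q$-span of the dual lattice of $\Lambda_{G}$ relative to $\langle\cdot,\cdot\rangle$; as $\langle\cdot,\cdot\rangle$ restricts to the nondegenerate norm form on $B$, that span is exactly $B = \Lambda_{G} \otimes \Q$. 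Hence $\omega_{0}$ is rational exactly when the real line $\R J$ meets $B$, that is, when $\mu J \in \Lambda_{G}$ for some $\mu \neq 0$; together with the previous paragraph this is the proposition. I expect the main obstacle to be the two sign statements --- that the $E_{\beta}$ span a \emph{negative}-definite subspace (so that algebraicity is a genuine condition on $J$, not automatic) and that $\omega_{0}$ is orthogonal to them --- because these turn on the interplay between the Pfaffians of left- versus right-multiplication operators and the complex orientation that $I_{J}$ induces on $X_{J}$, and on keeping left/right conventions consistent throughout; once the $(1,3)$-signature picture is in place, the passage to ``$\omega_{0}$ rational $\iff \mu J \in \Lambda_{G}$'' is bookkeeping with the trace form of $\Lambda_{G}$.
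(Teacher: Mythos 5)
Your argument is sound, but note that the paper does not actually prove this proposition---it is stated as a combination of Lemma 5.6 and Proposition 5.7 of \cite{fujiki_finite_1988}---so what you have written is in effect a reconstruction of Fujiki's argument rather than a variant of anything in the text. Granting the input that both you and the paper take from Fujiki's Theorem 3.11 and Section 3.4 (every translation-free action of $\mathcal{Q},\mathcal{D},\mathcal{T}$ lives, up to equivalence, on $\HH/\Lambda_{G}$ with linear parts acting by left multiplication by units of $\Lambda_{G}$ and complex structure given by right multiplication by some $J\in M$), your Hodge-theoretic computation is correct: the left-multiplication forms $E_{\beta}$, $\beta$ pure, are invariant under the complex structure, rational on $\Lambda_{G}$, and span a negative-definite rational $3$-space; the remaining positive direction of $H^{1,1}_{\R}$ is $\R\omega_{0}$; hence $\NS_{\Q}$ has rank $3$ or $4$, algebraicity forces rank $4$ (so $A$ is singular), and rank $4$ holds precisely when $\R\omega_{0}$ contains a nonzero rational class, which your duality computation converts into $\mu J\in\Lambda_{G}$ for some real $\mu\neq 0$. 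You also correctly isolate the one delicate point, namely that the complex orientation induced by right multiplication is opposite to the standard quaternionic one, which is why the $E_{\beta}$ square negatively while $\omega_{0}$ squares positively. Two small touch-ups: what rank $4$ gives is that some nonzero real multiple of $\omega_{0}$ is rational (a suitable positive rational multiple then being the polarization), not $\omega_{0}$ itself; and the step ``rank $4$ iff $\R\omega_{0}$ meets $H^{2}(X,\Q)$'' deserves the one-line justification that orthogonal projection (for the cup form) onto the rational subspace spanned by the $E_{\beta}$ preserves rationality. What your route buys is a self-contained, checkable proof of both assertions; what the paper's bare citation buys is brevity and alignment with Fujiki's setup, to which it immediately refers again (Theorems 7.2 and 7.4) for the finer existence statements.
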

Because $J$ must have unit norm, the set of $J \in M$ satisfying the second condition of Proposition \ref{FujikiPropp} is countable and dense in $M$.  Therefore, Abelian surfaces carrying an action by $\mathcal{Q}, \mathcal{D}$, and $\mathcal{T}$ are rigid---there are no infinitesimal deformations of the surface on which the group acts.  In Theorems 7.2 and 7.4 of Fujiki, necessary and sufficient conditions are given for a singular Abelian surface to admit an action by one of the three non-cyclic groups.

\subsection{Singularity Type}
Let $G$ be a finite group with a symplectic translation-free action on an Abelian surface $A$.  The singularities of $A/G$ are all of ADE type---that is, the stabilizer of an arbitrary point in $A$ is a finite subgroup of $\SL_{2}(\C)$.  Recall that $\Z_{n}$ has ADE type $A_{n-1}$ while $\mathcal{Q}, \mathcal{D}, \mathcal{T}$ have types $D_{4}, D_{5}, E_{6}$, respectively.  For a given action, let $a_{k}$ denote the number of $A_{k}$ singularities in $A/G$, let $d_{k}$ denote the number of $D_{k}$ singularities, and let $e_{k}$ denote the number of $E_{k}$ singularities.  In the present case, we present the \emph{singularity type} of an action as
\[a_{1}A_{1} + a_{2}A_{2} + a_{3}A_{3}+a_{5}A_{5} + d_{4}D_{4} + d_{5}D_{5} + e_{6}E_{6}.\]
\begin{prop}\label{prop:SingTypeProp}
The singularity type of a symplectic translation-free action of $G$ on $A$ is precisely one of those listed in Table \ref{table:TAB1}, with the corresponding value of $\frac{1}{2}e(A/G)$ listed in the final column.  
\end{prop}

We will sketch the proof of this proposition, but see also Lemma 3.19 in \cite{fujiki_finite_1988}\footnote{We claim there are a few minor but relevant typos in Lemma 3.19 and equation (19) of \cite{fujiki_finite_1988}.  Nonetheless, the ten actions described in Lemma 3.19 are precisely the ten non-trivial actions in our Table \ref{table:TAB1}.}.  Let $\Sing(A/G)$ and $(A/G)^{\circ} = A/G - \Sing(A/G)$ be the singular locus and smooth locus of $A/G$, respectively.  If $\pi: Y \to A/G$ is the minimal resolution of singularities, then by standard properties of the Euler characteristic, we have
\[e(Y) = e\big((A/G)^{\circ}\big) + e\big(\pi^{-1}(\Sing(A/G))\big).\]
The first term is computed by noting that $G$ acts freely on $A$ away from points with stabilizers
\begin{equation}\label{eqn:smLocEulChar}
e\big((A/G)^{\circ}\big) = -\bigg( \frac{a_{1}}{2} + \frac{a_{2}}{3}+\frac{a_{3}}{4}+\frac{a_{5}}{6}+\frac{d_{4}}{8}+\frac{d_{5}}{12}+\frac{e_{6}}{24}\bigg)
\end{equation}
and since the exceptional locus of $\pi: Y \to A/G$ is a disjoint union of ADE configurations of smooth rational curves, the second term is
\[e\big(\pi^{-1}(\Sing(A/G))\big) = 2a_{1} + 3a_{2} + 4a_{3} +6a_{5} + 5d_{4} + 6d_{5} + 7e_{6}.\]
Finally, since $Y$ is a smooth $K3$ surface, $e(Y)=24$.  We therefore get a strong numerical constraint on the numbers of singular points of each type.  For a given $G$, using that all subgroups define symplectic translation-free actions, one can systematically use this constraint to determine all allowed singularity types.  The possible solutions correspond to the 11 columns in Table \ref{table:TAB1}.      

Given a singularity type, let $r$ be the total number of singular points in $A/G$.  From (\ref{eqn:smLocEulChar}) and the obvious formula 
\[e(A/G) = e((A/G)^{\circ}) + r,\]
we can easily verify the values in the final column of Table \ref{table:TAB1}.


\section{Computation of the Partition Functions $\Part$}\label{partitionfunc}

The goal of this section is to compute the partition functions
\[\Part(q) := \sum_{d=0}^{\infty} e(\Hii)q^{d}\]
explicitly as an eta product, thereby proving Theorem \ref{thm:MainThm}.  First, we handle the more elementary Proposition \ref{prop:TransProp} on how translations affect the partition function.
\begin{proof}[Proof of Proposition \ref{prop:TransProp}]
If $G$ is a finite group acting symplectically on an Abelian surface $A$, and $T \subset G$ is the subgroup of translations, then $G'=G/T$ acts symplectically on $A'=A/T$ without translations.  Since $T$ acts freely, a $G$-invariant subscheme of $A$ must have length dividing $|T|$.  So $\Hii = \varnothing$ unless $d = m |T|$ for some integer $m \geq 0$, in which case $\Hilb^{m |T|} (A)^{G} \cong \Hilb^{m}(A')^{G'}$.  Hence,
\begin{equation*}
\begin{split}
\Part(q) = \sum_{d=0}^{\infty} e(\Hii)q^{d} & = \sum_{m=0}^{\infty} e(\Hilb^{m |T|}(A)^{G})q^{m |T|} \\
& = \sum_{m=0}^{\infty}e(\Hilb^{m}(A')^{G'})q^{m |T|} = Z_{A',G'}(q^{|T|}).
\end{split}
\end{equation*}
\end{proof}
Therefore, the problem is reduced to translation-free actions.  Our proof of Theorem \ref{thm:MainThm} is based on a computation of Bryan-Gyenge \cite{bryan_g-fixed_2020_TWO}, so we briefly review the relevant results.  For a finite subgroup $G_{\Delta} \subset \SU(2)$ with associated ADE root system $\Delta$, we can consider the natural action of $G_{\Delta}$ on $\C^{2}$, and define the local $G_{\Delta}$-fixed partition function
\[ Z_{\Delta}(q) = \sum_{n=0}^{\infty} e(\Hilb^{n}(\C^{2})^{G_{\Delta}})q^{n-\frac{1}{24}},\]
which Bryan-Gyenge compute for each allowed $\Delta$.

\begin{thm}[{\cite[Thm.\@ 1.2]{bryan_g-fixed_2020_TWO}}]\label{Bryan-Gynge}
For $\Delta$ of type $A_{n}$, the local partition function is given by
\[Z_{A_{n}}(q) = \frac{1}{\eta(q)}\]
while for type $D_{n}$ and $E_{n}$, it is
\[Z_{\Delta}(q) = \frac{\eta^{2}(q^{2}) \eta(q^{4E})}{\eta(q) \eta(q^{2E}) \eta(q^{2F}) \eta(q^{2V})}\]
with $(E, F, V)$ presented explicitly in each case in \cite[Thm.\@ 1.2]{bryan_g-fixed_2020_TWO}.
\end{thm}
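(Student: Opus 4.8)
The statement splits into the cyclic case, which I would settle by a one-line localization, and the binary dihedral and binary polyhedral cases $D_n,E_n$, which are the substance. For type $A_n$ the group $G_\Delta\cong\Z_{n+1}$ acts on $\C^2$ by $(x,y)\mapsto(\zeta x,\zeta^{-1}y)$, so it lies in the diagonal torus $(\C^{\times})^{2}\subset\GL_2(\C)$. Since this torus is abelian it commutes with $G_\Delta$ and hence acts on the smooth variety $\Hilb^n(\C^2)^{G_\Delta}$, and its fixed locus there coincides with the $(\C^{\times})^{2}$-fixed locus of $\Hilb^n(\C^2)$, namely the monomial ideals of colength $n$ (each such ideal is torus-invariant, hence automatically $G_\Delta$-invariant). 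By the localization property of the topological Euler characteristic for torus actions on complex varieties, $e(\Hilb^n(\C^2)^{G_\Delta})$ equals the number of monomial ideals of colength $n$, i.e.\ $p(n)$. Summing over the colength, $Z_{A_n}(q)=\sum_{k\geq 0}p(k)q^{k-1/24}=q^{-1/24}\prod_{j\geq 1}(1-q^{j})^{-1}=1/\eta(q)$, as claimed.

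For types $D_n$ and $E_n$ the group $G_\Delta$ is non-abelian and its centralizer in $\GL_2(\C)$ consists only of scalars, so no torus makes the fixed locus finite and the naive localization fails. Here the plan is to pass to a finer invariant. Let $\widehat\Delta$ be the affine Dynkin diagram of $\Delta$, whose nodes index the irreducible representations $\rho$ of $G_\Delta$ via the McKay correspondence, the dimension $\dim\rho=\delta_\rho$ being the corresponding Dynkin mark ($\delta_0=1$ on the trivial representation). Stratify $\Hilb^n(\C^2)^{G_\Delta}$ by the $G_\Delta$-module type of the quotient, $\C[x,y]/I\cong\bigoplus_{\rho}\rho^{\oplus v_\rho}$; this yields $\Hilb^n(\C^2)^{G_\Delta}=\coprod_{\vec v}\Hilb^{\vec v}(\C^2)^{G_\Delta}$ with $n=\sum_{\rho}\delta_\rho v_\rho$, and a refined series $\widehat Z_\Delta(q_\bullet)=\sum_{\vec v}e\bigl(\Hilb^{\vec v}(\C^2)^{G_\Delta}\bigr)\prod_{\rho}q_\rho^{v_\rho}$ whose specialization $q_\rho\mapsto q^{\delta_\rho}$ recovers $q^{1/24}Z_\Delta(q)$. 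I would then (i) describe $\coprod_{\vec v}\Hilb^{\vec v}(\C^2)^{G_\Delta}$ through the Nakajima quiver varieties attached to $\widehat\Delta$ (the geometric McKay correspondence) and enumerate the relevant torus-fixed points by the combinatorial ``Young wall'' model; (ii) read off that $\widehat Z_\Delta(q_\bullet)$ is a lattice sum which, by the Weyl--Kac denominator identity for $\widehat{\mathfrak g}_\Delta$ (the Jacobi triple product when $\Delta=A_1$, its $D,E$ analogues in general), equals an explicit product of theta functions indexed by the root system; and (iii) specialize $q_\rho\mapsto q^{\delta_\rho}$ and substitute the edge/face/vertex data $(E,F,V)$ of the Platonic solid (or dihedron) associated with $G_\Delta$---for which Euler's relation $V-E+F=2$ forces the overall $q$-exponent to equal $-1/24$---so that the theta product collapses, by elementary $\eta$-identities, to $\dfrac{\eta^{2}(q^{2})\,\eta(q^{4E})}{\eta(q)\,\eta(q^{2E})\,\eta(q^{2F})\,\eta(q^{2V})}$.

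The main obstacle is steps (i)--(ii) in the $D_n,E_n$ case: with no large torus available one cannot avoid a genuine geometric description of the moduli of $G_\Delta$-invariant subschemes, and the combinatorial model that linearizes the Euler-characteristic count is itself the technical heart, resting on the quiver-variety realization of the $G_\Delta$-Hilbert scheme and on the combinatorics of the basic representation of the affine Lie algebra $\widehat{\mathfrak g}_\Delta$. A more routine secondary difficulty is the case-by-case bookkeeping that matches the abstract affine-root data to the explicit triples $(E,F,V)$ and verifies the final eta-product simplifications.
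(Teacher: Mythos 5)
You should note at the outset that the paper itself offers no proof of this statement: it is quoted directly from Bryan--Gyenge, so the only question is whether your argument would establish it independently. Your $A_n$ case does: since the cyclic group lies in the diagonal torus, every monomial ideal is automatically $G_\Delta$-invariant, the torus still acts on $\Hilb^n(\C^2)^{G_\Delta}$, and Euler-characteristic localization gives $e(\Hilb^n(\C^2)^{G_\Delta})=p(n)$ independently of $n$, hence $Z_{A_n}=1/\eta$. That is complete and is the standard argument.

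For $D_n$ and $E_n$, however, what you have written is a roadmap rather than a proof, and the gap sits exactly where you flag it: steps (i)--(ii), together with (iii), \emph{are} the theorem. The identification of the strata $\Hilb^{\vec v}(\C^2)^{G_\Delta}$ with Nakajima quiver varieties for the affine diagram (with one-dimensional framing at the affine node), the evaluation of $\sum_{\vec v} e\bigl(\Hilb^{\vec v}(\C^2)^{G_\Delta}\bigr)\prod_\rho q_\rho^{v_\rho}$ via the basic representation of $\widehat{\mathfrak g}_\Delta$, and the conversion of the resulting character into the stated eta quotient after the substitution $q_\rho\mapsto q^{\dim\rho}$ constitute the technical content of the cited result (building on Nakajima and Gyenge--N\'emethi--Szendr\H{o}i), and none of it is carried out here. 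Two further cautions: the Weyl--Kac \emph{denominator} identity alone does not produce the generating function --- you need the full character of the basic representation (equivalently, Nakajima's theorem on the homology of these quiver varieties) before any denominator manipulation; and step (iii) is not routine bookkeeping, since the specialization $q_\rho=q^{\delta_\rho}$ of the theta/character expression and its identification with $\eta^{2}(q^{2})\eta(q^{4E})/\bigl(\eta(q)\eta(q^{2E})\eta(q^{2F})\eta(q^{2V})\bigr)$ is itself a delicate computation; the appearance of the polyhedral data $(E,F,V)$ is an output of that computation (your Euler-relation check of the $q^{-1/24}$ exponent is a consistency check, not a derivation). So the $D$/$E$ half of the proposal correctly describes the known strategy but leaves its essential steps unproven.
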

With this, we are ready to prove our main result.  

\begin{proof}[Proof of Theorem \ref{thm:MainThm}]
Suppose first that $A$ is an Abelian surface with a translation-free symplectic action by a finite group $G$.  To set notation, let $p_{1}, \ldots, p_{r} \in A/G$ be the singular points, each with stabilizer subgroup $G_{i} \subset G$ and corresponding ADE root system $\Delta_{i}$.  With $k=|G|$ and $k_{i} = |G_{i}|$, let $\{x_{i}^{1}, \ldots, x_{i}^{k/k_{i}}\}$ be the orbit in $A$ corresponding to singular point $p_{i}$.  The smooth locus of the quotient is
\[(A/G)^{\circ} = A/G - \{p_{1}, \ldots, p_{r}\}.\]
The same method from \cite[Section 2]{bryan_g-fixed_2020_TWO} of stratifying the Hilbert scheme applies here, and gives the relation
\begin{equation}\label{eqn:MainComp}
\begin{split}
\sum_{d=0}^{\infty} e(\Hii)q^{d} = & \bigg( \sum_{d=0}^{\infty} e( \Hilb^{d}((A/G)^{\circ}))q^{kd}\bigg) \\
& \cdot \prod_{i=1}^{r} \bigg(\sum_{d=0}^{\infty} e(\Hilb^{d}(\C^{2})^{G_{i}})q^{\frac{dk}{k_{i}}}\bigg)
\end{split}
\end{equation}

By removing the singular points from $A/G$ as well as their preimages in $A$, the restricted quotient map is unramified of degree $k$, which means
\[e(A) - \sum_{i=1}^{r} \#\{x_{i}^{1}, \ldots, x_{i}^{k/k_{i}}\} = k \cdot e((A/G)^{\circ}).\]
With $a :=e((A/G)^{\circ})$, and using that $e(A)=0$, we get
\[a = - \sum_{i=1}^{r} \frac{1}{k_{i}}.\]

Since $(A/G)^{\circ}$ is a smooth quasi-projective surface, by G\"{o}ttsche's formula \cite{gottsche_betti_1990}
\[\sum_{d=0}^{\infty} e( \Hilb^{d}((A/G)^{\circ}))q^{kd} = \prod_{n=1}^{\infty} (1-q^{kn})^{-a}.\]
Hence, using the definition of the local partition functions, as well as (\ref{eqn:MainComp}), we get
\[
\begin{split}
\Part(q) & = \prod_{n=1}^{\infty} (1-q^{kn})^{-a} \cdot \prod_{i=1}^{r} q^{\frac{k}{24 k_{i}}}Z_{\Delta_{i}}\big(q^{\frac{k}{k_{i}}}\big) \\
& = q^{(\frac{ka}{24} + \sum_{i} \frac{k}{24k_{i}})} \eta^{-a}(q^{k}) \cdot \prod_{i=1}^{r} Z_{\Delta_{i}}\big(q^{\frac{k}{k_{i}}}\big).
\end{split}
\]
From the relation $a + \sum_{i} \frac{1}{k_{i}}=0$, we see the exponent of the overall power of $q$ vanishes.  With the substitution $a = e(A/G) -r$ we see
\begin{equation}\label{eqn:FinalExpression}
\Part(q) = \eta(q^{k})^{r-e(A/G)} \cdot \prod_{i=1}^{r} Z_{\Delta_{i}}\big(q^{\frac{k}{k_{i}}}\big).
\end{equation}

By Proposition \ref{prop:SingTypeProp}, the ADE singularity type and Euler characteristic $e(A/G)$ can be read off of Table \ref{table:TAB1}.  Recall that $\Z_{n}$ has ADE type $A_{n-1}$ while $\mathcal{Q}, \mathcal{D}, \mathcal{T}$ have types $D_{4}, D_{5}, E_{6}$, respectively.  This determines the value of $k_{i}$ for each $\Delta_{i}$.  From (\ref{eqn:FinalExpression}), we can use Theorem \ref{Bryan-Gynge} to compute the function $\Parti(q)$ as an eta product, which we record in the third column of Table \ref{table:TAB1}.  By Theorem \ref{Bryan-Gynge}, $Z_{\Delta_{i}}^{-1}$ transforms as a modular form of weight $\frac{1}{2}$ for all $\Delta_{i}$.  By (\ref{eqn:FinalExpression}), the weight of $\Parti$ is therefore $\frac{1}{2}e(A/G)$.  It is clear that the leading coefficient of $\Parti$ is $1$.  Applying Proposition \ref{prop:KohlerProp} case by case, we see that $\Parti$ is a holomorphic modular form of level $k = |G|$.  

In case $G$ acts on $A$ with translations, we apply Proposition \ref{prop:TransProp}.  Since the $G'$ action on $A'$ is symplectic and translation-free, $Z_{A',G'}^{-1}$ is a holomorphic modular form of weight $\frac{1}{2} e(A'/G')$, with level $|G'|$, and normalized with leading coefficient $1$.  The weight, holomorphy, and normalization are invariant under the variable change $q \mapsto q^{|T|}$, but the new level is $|G'| \cdot |T| = |G|$.
\end{proof}

\begin{proof}[Proof of Proposition \ref{prop:chiyrefinement}]
The proof of the much more general Theorem 1.10 in \cite[Sec.\ 5]{bryan_g-fixed_2020_TWO} goes through nearly verbatim in the case of Abelian surfaces, with the following minor adjustments.  Let $Y \to A/G$ be the minimal resolution.  We would use the definition $\Zbir_{A,G}(q) = \sum_{d=0}^{\infty} [\Hilb^{d}(A)^{G}]_{\textsf{bir}} \, q^{d}$.  But since $Y$ is a $K3$ surface, we define $\Zbir_{Y}(q)$ just as in \cite{bryan_g-fixed_2020_TWO} with the extra power of $q$.  In particular, this implies the result for the normalized $\chi_{y}$-genera.      
\end{proof}


\section{Enumerative Geometry of the Orbifold Kummer Surface}\label{Yau-ZaslowCompJac}

The purpose of this section is to prove the remaining results from Section \ref{IntroEnumGeom} of the Introduction.  

\subsection{Review of Ordinary BPS Invariants}\label{subsec:GVIntro}

Let $X$ be a Calabi-Yau threefold with curve class $\beta \in H_{2}(X, \Z)$.  The central quantity in the Maulik-Toda proposal defining the ordinary BPS states \cite{maulik_gopakumarvafa_2018}, is the Hilbert-Chow morphism\footnote{Here $M_{X}(0, 0, \beta, 1)$ is the moduli space of Simpson stable torsion sheaves on $X$ with generic polarization, and Chern character $(0,0,\beta,1) \in H^{2*}(X, \Z)$.  Moreover, $\Chow_{\beta}(X)$ is the Chow variety of effective curves in the class $\beta$.} 
\[\pi: M_{X}(0, 0, \beta, 1) \to \Chow_{\beta}(X)\]
along with the perverse sheaf of vanishing cycles $\phi$ on $M_{X}(0, 0, \beta, 1)$.  We then define the Maulik-Toda polynomial
\begin{equation}\label{eqn:DefnMTPoly}
\MT(\pi) = \sum_{i,j \in \Z} (-1)^{i}y^{j} \dim \HH^{i}\big(\Chow_{\beta}(X), ?{}^{\mathfrak{p}}?\mathcal{H}^{j}(R \pi_{*} \phi)\big)
\end{equation}
where $\HH$ denotes hypercohomology, and $?{}^{\mathfrak{p}}?\mathcal{H}(\cdot)$ are the cohomology sheaves with respect to the perverse t-structure.  For further details, see \cite{de_cataldo_decomposition_2009,maulik_gopakumarvafa_2018}.  By Verdier duality, $\MT(\pi)$ is a Laurent polynomial in $y$ invariant under $y \leftrightarrow y^{-1}$.  Therefore, we can write
\begin{equation}\label{eqn:DefnGVinvMT}
\MT(\pi) = \sum_{g \geq 0} n_{\beta}(g)\big(y^{\frac{1}{2}}+y^{-\frac{1}{2}}\big)^{2g}
\end{equation}
for uniquely determined integers $n_{\beta}(g)$, which we call the \emph{(ordinary) BPS invariants}.  

The example relevant to us is when the threefold $X$ is a local $K3$ or Abelian surface $S$ with effective class $\beta \in H_{2}(S, \Z)$.  In this case, one can define $X$ as the total space of a fibration $f:X \to (\Delta,0)$ by $K3$ or Abelian surfaces over a pointed disk $(\Delta,0)$ such that:
\begin{enumerate}
\item All fibers of $f$ are projective
\item $f^{-1}(0) = S$
\item The class $\beta \in H_{2}(S, \Z)$ does not deform algebraically off the central fiber to any order.  
\end{enumerate}  

Because curves in the class $\beta$ do not deform even scheme-theoretically off the central fiber, the theory localizes to studying sheaves on the surface $S$.  Let $M_{S}(0,\beta,1)$ be the moduli space of stable sheaves with generic polarization, and Chern character $(0, \beta,1) \in H^{2*}(S, \Z)$.  We know $M_{S}(0,\beta,1)$ is a smooth projective holomorphic symplectic variety, and we additionally assume that it is of $K3$-type (this is not true when $S$ is an Abelian surface, but by Corollary \ref{YoshCorr}, the fixed locus under the standard involution is a disjoint union of varieties of this type).    

Under the above assumptions, the Hilbert-Chow morphism
\[\pi : M_{S}(0,\beta,1) \to \Chow_{\beta}(S)\]
has $\Chow_{\beta}(S) \cong \P^{n}$ where $\dim(M_{S}(0,\beta,1))=2n$, by \cite{hwang_base_2007}.  Hence, $\pi$ is a Lagrangian fibration.  Since $M_{S}(0,\beta,1)$ is smooth, $\phi = \underline{\Q}[2n]$.  For all $0 \leq p,q \leq 2n$, the perverse Hodge numbers of $\pi$ are defined to be
\begin{equation}
?{}^{\mathfrak{p}}?h^{p,q}(\pi) := \dim \HH^{q-n}\big(\P^{n}, ?{}^{\mathfrak{p}}?\mathcal{H}^{p-n}(R \pi_{*} \underline{\Q}[2n])\big).
\end{equation}
\begin{thm}[{\cite[Thm.\@ 0.2]{shen_topology_2019}}]\label{thm:ShenYin}
Under the above hypotheses, the perverse Hodge numbers of $\pi$ equal the ordinary Hodge numbers of $M_{S}(0,\beta,1)$
\[?{}^{\mathfrak{p}}?h^{p,q}(\pi) = h^{p,q}(M_{S}(0,\beta,1)).\]
\end{thm}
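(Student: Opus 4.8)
Theorem \ref{thm:ShenYin} is a ``perverse $=$ Hodge'' statement --- it says that the perverse filtration attached to the Lagrangian fibration $\pi$ records the full Hodge diamond of the total space --- and is the compact hyper-K\"{a}hler counterpart of the ``$P = W$'' phenomenon for character varieties. The plan is to follow the method of Shen--Yin. Write $M := M_{S}(0,\beta,1)$, so $\dim M = 2n$ and $\pi : M \to \P^{n}$. First I would set up the two Lefschetz structures attached to $\pi$: since $\pi$ is a projective morphism from a smooth compact K\"{a}hler manifold, the decomposition theorem (Beilinson--Bernstein--Deligne--Gabber, M.\ Saito) splits $R\pi_{*}\underline{\Q}[2n]$ into its perverse cohomology sheaves ${}^{\mathfrak p}\mathcal H^{k}$; relative hard Lefschetz for a $\pi$-relatively ample $\ell \in H^{2}(M,\Q)$ gives $\ell^{k} : {}^{\mathfrak p}\mathcal H^{-k} \xrightarrow{\sim} {}^{\mathfrak p}\mathcal H^{k}$, and ordinary hard Lefschetz on the base through $\theta := \pi^{*}c_{1}(\OO_{\P^{n}}(1))$ gives $\theta^{k} : \HH^{-k}(\P^{n}, {}^{\mathfrak p}\mathcal H^{j}) \xrightarrow{\sim} \HH^{k}(\P^{n}, {}^{\mathfrak p}\mathcal H^{j})$. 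Consequently the bigraded space $\bigoplus_{p,q}\HH^{q-n}(\P^{n}, {}^{\mathfrak p}\mathcal H^{p-n})$, whose $(p,q)$ summand has dimension ${}^{\mathfrak p}h^{p,q}(\pi)$ and lies in $H^{p+q}(M,\Q)$, carries two commuting Lefschetz $\mathfrak{sl}_{2}$-actions: a relative one from $\cup\,\ell$ (moving the index $p$) and one from the base from $\cup\,\theta$ (moving $q$). The operator $\cup\,\theta$ is nilpotent, with $(\cup\theta)^{n+1} = 0$ because $\theta^{n+1} = 0$ in $H^{*}(\P^{n})$, while $(\cup\theta)^{n} \neq 0$ since $\theta^{n}$ is the class of a Lagrangian fibre; so the ${}^{\mathfrak p}h^{p,q}(\pi)$ are exactly the weight multiplicities of this $\mathfrak{sl}_{2} \times \mathfrak{sl}_{2}$-module, and the perverse filtration of $\pi$ is the associated filtration.

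The first substantive step is the behaviour on the Verbitsky component $SH^{*}(M) \subseteq H^{*}(M,\Q)$, the subalgebra generated by $H^{2}$. The perverse filtration is multiplicative (de Cataldo--Migliorini), so on $SH^{*}(M)$ it is determined by its restriction to $H^{2}(M,\Q)$; there a short computation using the decomposition theorem and Poincar\'{e}--Verdier duality shows that $\mathrm{Gr}^{P}_{0}, \mathrm{Gr}^{P}_{1}, \mathrm{Gr}^{P}_{2}$ of $H^{2}$ have dimensions $1,\ b_{2}(M) - 2,\ 1$, with the lowest one spanned by $\theta$ (which is \emph{isotropic} for the Beauville--Bogomolov form), and these match $h^{0,2}, h^{1,1}, h^{2,0}$. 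This already gives ${}^{\mathfrak p}h^{p,q}(\pi) = h^{p,q}(M)$ on the part of $H^{*}(M)$ generated by $H^{2}$ --- which is everything when $n \leq 2$, but not in general: for $\Hilb^{d}(S)$ with $d \geq 3$ the Verbitsky component is a proper summand.

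The main obstacle is therefore the non-tautological cohomology. The tool is the Looijenga--Lunts--Verbitsky Lie algebra $\mathfrak g(M) \cong \mathfrak{so}\big(H^{2}(M,\Q) \oplus U\big)$, generated by cup product with the degree-$2$ classes together with their Lefschetz duals, under which $H^{*}(M,\Q)$ breaks into finitely many irreducible summands; since $\theta$ is isotropic it completes with $\ell$ to a distinguished $\mathfrak{sl}_{2} \times \mathfrak{sl}_{2} \subset \mathfrak g(M)$ whose branching on each irreducible summand should reproduce the corresponding slice of the Hodge diamond. Matching the perverse bigrading of the first step with this Hodge-theoretic bigrading, summand by summand, is the technical heart: Shen--Yin carry it out via a ``polarised bigraded Hodge--Lefschetz'' formalism, and Voisin's appendix to their paper supplies the description of the algebra generators of $H^{*}(M)$ needed for $K3^{[n]}$-type and generalised Kummer type. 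In the setting of this paper one could alternatively hope to use Corollary \ref{YoshCorr}, which realises each component of the relevant $\tau$-fixed locus as a $K3^{[d]}$-type Lagrangian fibration over $\P^{d}$, and then reduce --- by deformation invariance of perverse numbers in a family of Lagrangian fibrations --- to a Beauville--Mukai system, for which the perverse numbers are computable from the explicit additive basis of $H^{*}(\Hilb^{d}(S))$ of Nakajima and G\"{o}ttsche--Soergel. Either way, reconciling the perverse filtration of $\pi$ with the $\mathfrak g(M)$-action on the non-tautological part of $H^{*}(M)$ is where the real work lies.
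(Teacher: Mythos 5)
In the paper this theorem is not proved at all: it is quoted verbatim from Shen--Yin \cite[Thm.\@ 0.2]{shen_topology_2019}, and the only ``proof'' present is that citation (the paper then only extracts Corollary \ref{cor:Shen-YinCOR} from it by a small computation). So what you were really attempting is a proof of Shen--Yin's theorem itself, and what you have written is a faithful \emph{outline} of their strategy --- the two commuting $\mathfrak{sl}_2$-actions coming from relative hard Lefschetz and from the base, the analysis on $H^2$ and the Verbitsky component, then the Looijenga--Lunts--Verbitsky algebra and the bigraded Hodge--Lefschetz formalism, with Voisin's appendix for the generators --- rather than a proof. The decisive step, matching the perverse bigrading with the Hodge bigrading on the non-tautological summands of $H^*(M)$, is exactly the part you explicitly defer to Shen--Yin, so as a standalone argument the proposal has a genuine gap precisely at the theorem's core.

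Two of the inputs you invoke as known also need care. Multiplicativity of the perverse filtration with respect to cup product is not a formal consequence of de Cataldo--Migliorini for an arbitrary projective map; for Lagrangian fibrations it is itself a nontrivial statement, so it cannot simply be cited as the mechanism that propagates the $H^2$-computation to the whole Verbitsky component without justification. More seriously, your proposed alternative route --- deform each $K3^{[d]}$-type Lagrangian fibration to a Beauville--Mukai system and compute there --- rests on deformation invariance of the perverse numbers in a family of Lagrangian fibrations, which is not available independently: in the literature that invariance is \emph{deduced} from the equality of perverse and Hodge numbers (i.e.\ from the very theorem being proved), so as stated this reduction is circular. (It would also only address the $K3^{[n]}$-type case, which suffices for the application in this paper via Corollary \ref{YoshCorr}, but not the general statement of \cite{shen_topology_2019}.)
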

Using (\ref{eqn:DefnMTPoly}) and (\ref{eqn:chiydefn}), the following is then a small computation.
\begin{cor}\label{cor:Shen-YinCOR}
The Maulik-Toda polynomial associated to $\pi$ can be expressed as the normalized $\chi_{y}$-genus of $M_{S}(0,\beta,1)$
\[\MT(\pi) = \overline{\chi}_{y}(M_{S}(0,\beta,1)).\]
\end{cor}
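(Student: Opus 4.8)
The plan is to compute $\MT(\pi)$ directly from its definition \eqref{eqn:DefnMTPoly}, using the fact that $M_{S}(0,\beta,1)$ is smooth so that the perverse sheaf of vanishing cycles is simply the shifted constant sheaf $\phi = \underline{\Q}[2n]$, and then invoke Theorem \ref{thm:ShenYin} to pass from perverse Hodge numbers to ordinary ones. First I would unwind \eqref{eqn:DefnMTPoly} against the definition of the perverse Hodge numbers $?{}^{\mathfrak{p}}?h^{p,q}(\pi)$: writing $i = p - n$ and $j = q - n$ and substituting, we get
\[
\MT(\pi) = \sum_{p,q} (-1)^{p-n} y^{q-n} \, ?{}^{\mathfrak{p}}?h^{p,q}(\pi).
\]
By Theorem \ref{thm:ShenYin} this equals $\sum_{p,q} (-1)^{p-n} y^{q-n} h^{p,q}(M_{S}(0,\beta,1))$, where the sum runs over $0 \le p,q \le 2n$ and $\dim M_{S}(0,\beta,1) = 2n$.

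Next I would match this against the definition \eqref{eqn:chiydefn} of the normalized $\chi_{y}$-genus. For $M = M_{S}(0,\beta,1)$ with $\dim M = 2n$, we have
\[
\overline{\chi}_{y}(M) = (-y)^{-n} \sum_{p,q} (-1)^{p} h^{p,q}(M) y^{q} = \sum_{p,q} (-1)^{p-n} y^{q-n} h^{p,q}(M),
\]
since $(-y)^{-n} = (-1)^{-n} y^{-n} = (-1)^{n} y^{-n}$ and $(-1)^{p} (-1)^{n} = (-1)^{p-n}$ (as $(-1)^{n} = (-1)^{-n}$). This is term-by-term the expression obtained above for $\MT(\pi)$, so the two agree. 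The only point requiring a word of care is the indexing convention: one should check that the ranges of summation in \eqref{eqn:DefnMTPoly} (all $i,j \in \Z$, with the cohomology sheaves vanishing outside $|i|, |j| \le n$ by the relative hard Lefschetz / perverse amplitude bounds) correspond exactly to the Hodge ranges $0 \le p, q \le 2n$, which is immediate from the substitution $p = i + n$, $q = j + n$.

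I do not expect a serious obstacle here — the content is entirely in Theorem \ref{thm:ShenYin}, which is quoted, and everything else is bookkeeping with signs and shifts. The one place to be slightly careful is the normalization factor $(-y)^{-\frac{1}{2}\dim M}$ in \eqref{eqn:chiydefn}: one must confirm that the $(-1)^{n}$ and $y^{-n}$ it produces are precisely the factors needed to convert $\sum (-1)^{p} h^{p,q} y^{q}$ into the manifestly $y \leftrightarrow y^{-1}$–symmetric, $i$-shifted form in which $\MT(\pi)$ naturally appears. Since $M_{S}(0,\beta,1)$ is holomorphic symplectic its Hodge numbers satisfy $h^{p,q} = h^{2n-p,q} = h^{p,2n-q}$, so this symmetry is automatic and consistent with the Verdier-duality symmetry of $\MT(\pi)$ noted after \eqref{eqn:DefnMTPoly}; this is a useful consistency check rather than a step of the proof.
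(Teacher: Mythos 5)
Your proposal is correct and is essentially the paper's own argument: the paper proves the corollary by exactly this bookkeeping, combining \eqref{eqn:DefnMTPoly}, the definition of the perverse Hodge numbers, Theorem \ref{thm:ShenYin}, and \eqref{eqn:chiydefn}. One small index caution: in the paper's convention the hypercohomology degree of ${}^{\mathfrak{p}}h^{p,q}(\pi)$ is $q-n$ and the perverse degree is $p-n$, so the correct substitution is $i=q-n$, $j=p-n$ (giving $\MT(\pi)=\sum_{p,q}(-1)^{q-n}y^{p-n}\,{}^{\mathfrak{p}}h^{p,q}(\pi)$); your transposed version nevertheless yields the same answer, since after applying Theorem \ref{thm:ShenYin} the Hodge symmetry $h^{p,q}=h^{q,p}$ of the smooth projective variety $M_{S}(0,\beta,1)$ makes the two sums agree.
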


\subsection{\boldmath{$\tau$}-BPS Invariants of the Orbifold Kummer Surface}

There does not currently exist a definition of BPS invariants for orbifolds.  We give here a coarse\footnote{See Remark \ref{rmk:noTwSec}} definition in the case of the orbifold Kummer surface.  

Let $A$ be a polarized Abelian surface of type $(1,d)$ with $\beta_{d} \in H_{2}(A, \Z)$ the class of the polarization.  The orbifold Kummer surface is defined as the stack quotient $[A/\tau]$ where $\tau: A \to A$ is the standard involution.  

By the same argument as in Section \ref{subsec:GVIntro}, the Maulik-Toda proposal applied to a local orbifold Kummer surface will localize to a theory of sheaves on the surface itself.  A natural guess to define orbifold BPS states is to use the proper map
\begin{equation}\label{eqn:HilbChowinvOKS}
\pi_{d} : M_{A}(0,\beta_{d},1)^{\tau} \to \Chow_{\beta_{d}}(A)^{\tau}
\end{equation}
where the superscript $\tau$ denotes the fixed locus of the induced $\tau$ action.  

By Corollary \ref{YoshCorr}, the space $M_{A}(0,\beta_{d},1)^{\tau}$ is a disjoint union of smooth holomorphic symplectic varieties of $K3$-type.  We can decompose it, along with the invariant Chow variety, into connected components
\[M_{A}(0, \beta_{d}, 1)^{\tau} = \coprod_{k,l} M_{k,l} \,\,\,\,\,\,\,\,\,\,\,\,  \Chow_{\beta_{d}}(A)^{\tau} = \coprod_{l} B_{l}.\]
Restricting $\pi_{d}$ to $M_{k,l}$, we get surjective maps $\pi_{k,l} : M_{k,l} \to B_{l}$.  Since each $B_{l}$ is smooth, by \cite{hwang_base_2007} each $\pi_{k,l}$ is a connected Lagrangian fibration over a projective space $B_{l} = \P^{d_{l}}$ where $\dim(M_{k,l}) = 2d_{l}$.    

Associated to each $\pi_{k,l}$ with $\phi = \underline{\Q}_{M_{k,l}}[2d_{l}]$ we get the Maulik-Toda polynomial $\MT(\pi_{k,l})$ as in (\ref{eqn:DefnMTPoly}).  The Maulik-Toda polynomial associated to the map in (\ref{eqn:HilbChowinvOKS}) is then   
\[\MT(\pi_{d}) = \sum_{k,l} \MT(\pi_{k,l}).\]
\begin{defn}\label{defn:GVinvofOKS}
The $\tau$-BPS invariants of the orbifold Kummer surface $[A/\Z_{2}]$ are integers $\ndh$ defined for all $d \geq 0$ by 
\begin{equation}\label{eqn:MTdefn}
\MT(\pi_{d}) = \sum_{h=0}^{\infty} \ndh (y^{\frac{1}{2}}+ y^{-\frac{1}{2}})^{2h}.
\end{equation}
\end{defn}

\subsection{Proof of Proposition \ref{prop:KeyEnummPropp}}\label{proofofKeyEnummPropp}

For all $d \geq 0$, we denote by $M_{A}(1,0,-d)$ the moduli space of torsion-free sheaves on $A$ with Chern character $(1,0,-d) \in H^{2*}(A,\Z)$.  
\begin{lemma}\label{lemma:simplelemma}
There exists a canonical $\tau$-equivariant isomorphism 
\[
M_{A}(1,0,-d) \to \widehat{A} \times \Hilb^{d}(A).
\] 
\end{lemma}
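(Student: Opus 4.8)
The plan is to realize the isomorphism $M_A(1,0,-d) \to \widehat{A} \times \Hilb^d(A)$ concretely via the determinant morphism and a local-to-global splitting, and then check that each of the two factors is compatible with the $\tau$-action. First I would recall the standard structure result: any torsion-free sheaf $F$ on $A$ with $\ch(F) = (1,0,-d)$ is automatically $\mu$-stable (being of rank one), hence the moduli space is fine, and the determinant line bundle $\det(F)$ has $c_1 = 0$, so it defines a point of $\widehat{A} = \Pic^0(A)$. Tensoring $F$ by $\det(F)^{-1}$ produces a torsion-free sheaf with trivial determinant, which — being rank one with $\chi$ shifted by $d$ — is precisely the ideal sheaf $I_Z$ of a length-$d$ subscheme $Z \subset A$. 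This gives the set-theoretic bijection $F \mapsto (\det F,\, Z)$; I would then assemble it into a morphism of moduli functors using the universal sheaf and the relative determinant, and note that the inverse $(L, Z) \mapsto L \otimes I_Z$ is also a morphism, so the map is an isomorphism of schemes (this is the content of Lemma \ref{lemma:simplelemma} as a statement about the moduli space, without yet invoking $\tau$).

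Next I would address $\tau$-equivariance, which is the crux. The involution $\tau : A \to A$ acts on $M_A(1,0,-d)$ by $F \mapsto \tau^* F$; I must show that under the identification above this becomes the product of the induced actions on $\widehat{A}$ and on $\Hilb^d(A)$. For the Hilbert scheme factor this is immediate: $\tau^*$ sends $I_Z$ to $I_{\tau^{-1}(Z)} = I_{\tau(Z)}$ since $\tau$ is an involution, which is exactly the standard $\tau$-action on $\Hilb^d(A)$. For the dual-abelian-variety factor, I would use that $\det$ commutes with pullback, so $\det(\tau^*F) = \tau^*\det(F)$, and that the action of $\tau = (-1)_A$ on $\Pic^0(A)$ by pullback is the standard involution $L \mapsto \tau^* L = L^{-1}$ on $\widehat{A}$ (pullback by $-1$ inverts a degree-zero line bundle, since $[-1]^*$ acts as $-1$ on $H^1(\OO_A)$ and preserves the lattice). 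Finally, since $\tau^*(L \otimes I_Z) = \tau^*L \otimes \tau^* I_Z$, the two factor-actions are genuinely independent, so the isomorphism intertwines the $\tau$-action on $M_A(1,0,-d)$ with the product action on $\widehat{A} \times \Hilb^d(A)$, as claimed.

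I expect the main obstacle to be purely bookkeeping rather than conceptual: making the bijection $F \mapsto (\det F, I_Z)$ into an honest morphism of moduli schemes requires choosing (or working around the choice of) a universal family — $M_A(1,0,-d)$ is fine, so a universal sheaf exists, but the relative determinant and the flattening stratification producing the relative length-$d$ subscheme should be spelled out carefully — and then verifying that this construction, and its inverse, are functorial in the base. This is entirely standard (it is the torsion-free analogue of the classical identification $\Pic^0 \times \Hilb^d$ appearing in Yoshioka's work), so I would cite the relevant statements and concentrate the written proof on the $\tau$-equivariance of the two factors, which is the genuinely new observation needed for Proposition \ref{prop:KeyEnummPropp} and Corollary \ref{YoshCorr}.
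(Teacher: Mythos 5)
Your proposal is correct and follows essentially the same route as the paper: the paper's proof is precisely the unique decomposition $\mathcal{F} = L_{0} \otimes \mathcal{I}_{Z}$ with $L_{0} \in \widehat{A}$ and $Z$ of length $d$, with $\tau$-equivariance noted as immediate since pullback respects the tensor decomposition. Your additional details (determinant/double-dual construction, functoriality in families, and the identification of $\tau^{*}$ on $\widehat{A}$ as $L \mapsto L^{-1}$) are accurate elaborations of what the paper leaves implicit.
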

\begin{proof}
A sheaf $\mathcal{F} \in M_{A}(1,0,-d)$ can be uniquely expressed as $\mathcal{F} = L_{0} \otimes \mathcal{I}_{d}$ where $L_{0} \in \widehat{A}$ is a degree-zero line bundle and $\mathcal{I}_{d}$ is the ideal sheaf of a zero-dimensional subscheme of length $d$.  The $\tau$-equivariance is immediate.  
\end{proof}
Let $A_{0} = E \times F$ be a product Abelian surface with $E$ and $F$ two elliptic curves.  Denote by $\sigma$ the class of $E \times \{\text{pt}\}$, and by $f$ the class of $\{\text{pt}\} \times F$.  Lemma \ref{lemma:simplelemma} reduces the proof of the proposition to constructing a $\tau$-equivariant deformation equivalence 
\begin{equation}\label{eqn:defeqqq}
M_{A}(1,0,-d) \to M_{A}(0, \beta_{d}, 1).
\end{equation}
We do so by deforming to $A_{0}$, as summarized in the diagram  
\begin{equation}\label{eqn:defeqdiag}
\begin{tikzcd}
M_{A}(1,0,-d)  \arrow[swap]{d}{} & M_{A}(0, \beta_{d}, 1) \\
M_{A_{0}}(1,0,-d) \arrow{r}{\sim}& M_{A_{0}}(0, \sigma +df, 1) \arrow{u}{}
\end{tikzcd}
\end{equation}
Here, as we will explain, the vertical arrows are $\tau$-equivariant deformation equivalences and the horizontal arrow is a $\tau$-equivariant isomorphism induced from the relative Fourier-Mukai functor applied to the natural projection $A_{0} \to E$.  

\begin{lemma}
The vertical arrows in diagram (\ref{eqn:defeqdiag}) are $\tau$-equivariant deformation equivalences.  
\end{lemma}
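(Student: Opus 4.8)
The plan is to establish that each vertical map in \eqref{eqn:defeqdiag} is a deformation equivalence in a way that is manifestly compatible with the $\tau$-action, by deforming the Abelian surface within a family where the action persists. First I would recall that the Picard rank one $(1,d)$-polarized surface $A$ and the product surface $A_0 = E \times F$ (with its polarization $\sigma + df$) lie in a common connected family of polarized Abelian surfaces: the moduli space of $(1,d)$-polarized Abelian surfaces is irreducible, and $\tau$ extends fibrewise over any such family since the involution $a \mapsto -a$ is canonical on every Abelian surface. Over such a family $\mathcal{A} \to B$, the relative moduli space of Simpson-stable sheaves with fixed Mukai vector is smooth and proper over $B$ once the polarization is chosen generically (so there are no strictly semistable sheaves), by the standard theory for moduli of sheaves on Abelian surfaces (Yoshioka). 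This realizes $M_A(1,0,-d)$ and $M_{A_0}(1,0,-d)$ as fibres of a smooth proper morphism, hence deformation equivalent; and because the fibrewise $\tau$-action on $\mathcal{A}$ induces a $\tau$-action on the whole relative moduli space by pullback, the deformation equivalence carries the $\tau$-action along. The same argument applied to the Mukai vector $(0, \beta_d, 1)$ (resp.\ $(0, \sigma + df, 1)$ on $A_0$) handles the right-hand vertical arrow.

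The key steps, in order, are: (1) exhibit a connected family of polarized Abelian surfaces interpolating between $A$ and $A_0$, noting the polarization type $(1,d)$ is preserved and, on $A_0$, the class $\sigma + df$ is a polarization; (2) observe $\tau$ extends to an involution of the total space acting fibrewise as $a \mapsto -a$; (3) form the relative moduli space of Simpson-stable sheaves with the given Mukai vector and a relatively generic polarization, invoking Yoshioka's results to see it is smooth and proper over the base; (4) conclude the two fibres are deformation equivalent and that the $\tau$-equivariant structure (given by pullback along the fibrewise involution) is transported by this deformation; (5) repeat for the second Mukai vector. One subtlety to address explicitly: for $d = 0$ the primitive polarization class has square zero and one works with $A_0 = E \times F$ and $\beta_0 = \sigma$ from the start (as flagged in the Introduction's footnote), so the ``deformation'' is trivial and there is nothing to prove on the left; on the right one still deforms, but the argument is unchanged.

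The main obstacle I expect is step (3): ensuring the relative moduli space is genuinely \emph{smooth and proper} over the whole base, not just over a generic point. Properness requires that the chosen polarization be generic for \emph{every} fibre simultaneously, which one arranges by choosing the polarization on the total space to avoid the (finitely many, locally closed) walls where strictly semistable sheaves appear; some care is needed because the wall structure can jump as one moves in $B$, so one may have to shrink $B$ or pass to an étale cover to trivialize the wall data — this does not affect connectedness of the family joining $A$ to $A_0$ since one can chain together finitely many such charts. Smoothness is less delicate: it follows from the holomorphic-symplectic structure on moduli of sheaves on Abelian surfaces (the obstruction space is dual to the tangent space via the symplectic form, and for primitive Mukai vectors with generic polarization the moduli space is fine and smooth of the expected dimension), and this holds uniformly in the family. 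Once smoothness and properness over the (possibly refined) base are in hand, the deformation equivalence is formal, and $\tau$-equivariance is automatic because $\tau$ was built into the family from the outset.
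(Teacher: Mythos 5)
Your proposal is correct and follows essentially the same route as the paper: the paper simply cites Yoshioka's Proposition 4.12, whose proof deforms $A$ to $A_{0}$ through a family of polarized Abelian surfaces with relative moduli of stable sheaves, and then observes that the standard involution acts fibrewise on the family, so the deformation equivalence is $\tau$-equivariant. You have unpacked the content of that cited result (connected family, relative moduli space, genericity of the polarization) and made the identical key observation about the fibrewise $\tau$-action, so there is no substantive difference in approach.
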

\begin{proof}
By Proposition 4.12 of \cite{yoshioka_moduli_2001}, we have a deformation equivalence 
\[
M_{A}(1,0,-d) \to M_{A_{0}}(1,0,-d).
\]
The proof involves a family of polarized Abelian surfaces.  But clearly we have a fiberwise action on the family using the standard involution on each Abelian surface.  It is in this sense that the deformation equivalence is $\tau$-equivariant.  Precisely the same argument holds for $M_{A_{0}}(0, \sigma +df, 1) \to M_{A}(0, \beta_{d}, 1)$.  
\end{proof}

Consider the following diagram
\begin{equation}\label{dfsds}
\begin{tikzcd}
& A_{0} \times_{E} A_{0} \arrow{rd}{q} \arrow{ld}[swap]{p} & \\
A_{0} & & A_{0}
\end{tikzcd}
\end{equation}
where $p,q$ are the two canonical projections.  Let $P$ denote the universal Poincar\'{e} line bundle on $A_{0} \times_{E} A_{0}$, and define the relative Fourier-Mukai functor
\[
\Phi_{P}: D^{b}(A_{0}) \to D^{b}(A_{0})
\]
by $\mathcal{E} \mapsto Rq_{*}\big(P \otimes p^{*}\mathcal{E}\big)$.  For more details on relative Fourier-Mukai in more generality, see Section 3.2 of \cite{yoshioka_moduli_2001} or Chapter 6 of \cite{bartocci_fourier-mukai_2009}.  

\begin{lemma}
The relative Fourier-Mukai functor $\Phi_{P}$ is $\tau$-equivariant.  More speecifically, for all $\mathcal{E} \in D^{b}(A_{0})$
\[
\tau^{*} \big( \Phi_{P}(\mathcal{E}) \big) \cong \Phi_{P} \big( \tau^{*}(\mathcal{E}) \big).
\]
\end{lemma}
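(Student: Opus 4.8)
The plan is to lift $\tau$ to an involution $\tilde\tau$ of the fiber product $A_{0}\times_{E}A_{0}$ compatible with both projections $p,q$ in (\ref{dfsds}), to verify that the relative Poincar\'{e} bundle $P$ is $\tilde\tau$-invariant, and then to chase the definition $\Phi_{P}(\mathcal{E})=Rq_{*}(P\otimes p^{*}\mathcal{E})$ through the projection formula and flat base change. Writing $A_{0}=E\times F$ with $\pi_{E}:A_{0}\to E$ the projection used to form $A_{0}\times_{E}A_{0}$, the first observation is that the standard involution covers $-1_{E}$ on the base, i.e.\ $\pi_{E}\circ\tau=(-1_{E})\circ\pi_{E}$. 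Hence $\tau\times\tau$ preserves the locus $A_{0}\times_{E}A_{0}=\{(a,a'):\pi_{E}(a)=\pi_{E}(a')\}$, and restricting gives an involution $\tilde\tau$ with $p\circ\tilde\tau=\tau\circ p$ and $q\circ\tilde\tau=\tau\circ q$.

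From $p\circ\tilde\tau=\tau\circ p$ one gets $p^{*}\tau^{*}\cong\tilde\tau^{*}p^{*}$ formally (monoidality of inverse image along the two equal composites). From $q\circ\tilde\tau=\tau\circ q$, together with the fact that $\tilde\tau$ is an isomorphism with $\tilde\tau=\tilde\tau^{-1}$, one gets $Rq_{*}\circ\tilde\tau^{*}\cong\tau^{*}\circ Rq_{*}$, since $Rq_{*}\tilde\tau^{*}\cong Rq_{*}\tilde\tau_{*}^{-1}\cong R(q\circ\tilde\tau^{-1})_{*}\cong R(\tau\circ q)_{*}\cong\tau^{*}Rq_{*}$ (using $\tau_{*}=\tau^{*}$). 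These are purely formal once the base identities hold.

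The key step is to show $\tilde\tau^{*}P\cong P$. On each fiber of $A_{0}\times_{E}A_{0}\to E$, which is a product $F_{e}\times F_{e}$ of an elliptic curve with itself, $\tilde\tau$ acts as $(-1)\times(-1)$, and $P$ restricts to the Poincar\'{e} bundle on $F_{e}\times\widehat{F_{e}}$ under the canonical self-duality of an elliptic curve. The standard identity $(-1_{A}\times 1)^{*}\mathcal{P}_{A}\cong\mathcal{P}_{A}^{-1}\cong(1\times -1_{\widehat{A}})^{*}\mathcal{P}_{A}$ for Abelian varieties gives $(-1_{A}\times -1_{\widehat{A}})^{*}\mathcal{P}_{A}\cong\mathcal{P}_{A}$, so $\tilde\tau^{*}P$ and $P$ agree on every fiber. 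With the relative normalization of $P$ fixed as in \cite{yoshioka_moduli_2001} (restricting trivially along the two relative zero-sections), this fiberwise agreement promotes to an isomorphism $\tilde\tau^{*}P\cong P$ on the total space, after checking that the two bundles can differ at most by the pullback of a line bundle from $E$ which is forced to be trivial by the normalization.

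Combining the three ingredients,
\[
\Phi_{P}(\tau^{*}\mathcal{E})=Rq_{*}\!\big(P\otimes p^{*}\tau^{*}\mathcal{E}\big)\cong Rq_{*}\!\big(\tilde\tau^{*}P\otimes\tilde\tau^{*}p^{*}\mathcal{E}\big)\cong Rq_{*}\tilde\tau^{*}\!\big(P\otimes p^{*}\mathcal{E}\big)\cong\tau^{*}Rq_{*}\!\big(P\otimes p^{*}\mathcal{E}\big)=\tau^{*}\Phi_{P}(\mathcal{E}),
\]
where the middle isomorphism uses $\tilde\tau^{*}P\cong P$ and monoidality of $\tilde\tau^{*}$. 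I expect the main obstacle to be precisely the normalization bookkeeping in the third paragraph: the relative Poincar\'{e} bundle is canonical only up to twist by $\pi_{E}^{*}(\text{line bundle on }E)$, so one must pin down the normalization used in \cite{yoshioka_moduli_2001} and establish $\tilde\tau$-invariance on the nose rather than merely fiberwise; everything else is a formal consequence of $\tilde\tau$ covering $\tau$ along both projections.
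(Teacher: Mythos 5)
Your overall architecture matches the paper's: you lift $\tau$ to the diagonal involution on $A_{0}\times_{E}A_{0}$ (the paper's $\tau_{\Delta}$, which exists precisely because $\tau$ covers $-1_{E}$), you reduce everything to the single claim $\tilde\tau^{*}P\cong P$, and the surrounding manipulations with $p^{*}$, $Rq_{*}$ and the projection formula are the same formal steps the paper dismisses as ``straightforward functorial properties.'' The difference is in how the key claim is proved. The paper avoids your seesaw-plus-normalization route entirely: it cites the explicit divisor presentation $P=\mathcal{O}_{A_{0}\times_{E}A_{0}}(D+D'-\Delta)$ (Definition 6.14 of the Bartocci--Bruzzo--Hern\'andez Ruip\'erez reference), observes that each of $D$, $D'$, $\Delta$ is $\tau_{\Delta}$-invariant, and concludes $\tau_{\Delta}^{*}P\cong P$ in one line. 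This is exactly the device that disposes of what you correctly identify as the main obstacle (the relative Poincar\'e bundle being canonical only up to a twist from $E$): once $P$ is pinned down by that divisor, no fiberwise comparison or normalization bookkeeping is needed.

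Two cautions about your version of the key step. First, $\tilde\tau$ does \emph{not} act on the fibers of $A_{0}\times_{E}A_{0}\to E$: it sends the fiber over $e$ to the fiber over $-e$, so the sentence ``on each fiber $\tilde\tau$ acts as $(-1)\times(-1)$'' is not literally correct. It is repairable: using $A_{0}=E\times F$ one has $A_{0}\times_{E}A_{0}\cong E\times F\times F$ with $\tilde\tau=(-1_{E})\times(-1_{F})\times(-1_{F})$, the restriction of $P$ to a fiber is independent of $e$ (it is $\mathcal{O}_{F\times F}(F\times\{0\}+\{0\}\times F-\Delta_{F})$), and then the identity $(-1,-1)^{*}\mathcal{P}\cong\mathcal{P}$ gives the fiberwise agreement of $\tilde\tau^{*}P$ and $P$. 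Second, after seesaw the discrepancy line bundle pulled back from $E$ is computed by restricting to the double zero section $s(e)=((e,0),(e,0))$, and since $\tilde\tau\circ s=s\circ(-1_{E})$ you get $(-1_{E})^{*}(s^{*}P)\otimes(s^{*}P)^{-1}$; so you need either $s^{*}P$ trivial or at least $(-1_{E})^{*}$-symmetric, which again is easiest to check from the divisor presentation. So your route works but requires exactly the care you anticipated, whereas the paper's choice of model for $P$ makes the invariance manifest.
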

\begin{proof}
The key is that the Poincar\'{e} bundle is $\tau_{\Delta}$-invariant; that is, there exists an isomorphism $P \cong \tau_{\Delta}^{*} P$ where $\tau_{\Delta}$ is the induced diagonal action on $A_{0} \times_{E} A_{0}$.  This is because (see Definition 6.14 of \cite{bartocci_fourier-mukai_2009})
\[
P = \mathcal{O}_{A_{0} \times_{E} A_{0}}\big(D + D' -\Delta\big)
\]
where $D = A_{0} \times \{0\}$, $D'=\{0\} \times A_{0}$ and $\Delta$ is the diagonal divisor in $A_{0} \times_{E} A_{0}$.  All three of these divisors are $\tau_{\Delta}$-invariant.  The remainder of the argument follows from straightforward functorial properties of $\tau$, $p$, and $q$.

\end{proof}

\begin{cor}
The relative Fourier-Mukai functor $\Phi_{P}$ induces a $\tau$-equivariant isomorphism
\[
M_{A_{0}}(1,0,-d) \to M_{A_{0}}(0,\sigma +df,1).
\]
\end{cor}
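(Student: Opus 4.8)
The plan is to obtain the statement by gluing together a non-equivariant result of Yoshioka with the $\tau$-equivariance of $\Phi_{P}$ just established.

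First I would recall the underlying (non-equivariant) isomorphism of moduli schemes. By \cite[\S 3.2]{yoshioka_moduli_2001} and the WIT analysis carried out there, after fixing a suitably generic polarization on $A_{0}$ every stable sheaf $\mathcal{F}$ with $\ch(\mathcal{F}) = (1,0,-d)$ is WIT with respect to $\Phi_{P}$ for one fixed index, its transform $\widehat{\mathcal{F}}$ is again stable, and $\ch(\widehat{\mathcal{F}}) = (0,\sigma+df,1)$; moreover the quasi-inverse transform (a shift of $\Phi_{P^{\vee}}$ composed with $(-1)^{*}$) reverses this. Performing these transforms relative to the moduli spaces, i.e. applying the relative Fourier--Mukai functor to a (twisted) universal sheaf on $A_{0}\times M_{A_{0}}(1,0,-d)$, produces mutually inverse morphisms of schemes, hence an isomorphism $\Phi_{P}\colon M_{A_{0}}(1,0,-d)\xrightarrow{\sim} M_{A_{0}}(0,\sigma+df,1)$. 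Here I regard $\Phi_{P}$ as an endofunctor of $D^{b}(A_{0})$ via the principal polarization identification $\widehat{F}\cong F$; since $A_{0}$ has vanishing Euler characteristic one must also record that both moduli spaces are fine, which holds as the Mukai vectors are primitive and the polarization generic. Finally I would note that $\tau^{*}$ preserves each of the two moduli spaces: $\tau=-1$ acts trivially on $H^{\mathrm{even}}(A_{0})$ and, being $-1$ on $H^{1}$, trivially on $H^{2}(A_{0})=\Lambda^{2}H^{1}(A_{0})$, so it fixes the relevant Chern characters and the ample cone, hence preserves stability, and it fixes the classes $\sigma$ and $f$.

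Next I would upgrade to a $\tau$-equivariant isomorphism. Both moduli spaces carry the action $[\mathcal{E}]\mapsto[\tau^{*}\mathcal{E}]$. By the preceding lemma, $\tau^{*}\circ\Phi_{P}\cong\Phi_{P}\circ\tau^{*}$ on $D^{b}(A_{0})$; since $\tau$ is an isomorphism of varieties, $\tau^{*}$ is t-exact for the standard t-structure, so it also commutes with the passage from $\Phi_{P}(\mathcal{E})$ to its unique nonzero cohomology sheaf used above. Feeding a universal sheaf through this identity, equivalently tracking the induced bijection $[\mathcal{F}]\mapsto[\widehat{\mathcal{F}}]$ on closed points and invoking that the moduli spaces are fine to promote the point-level statement to a morphism of schemes, shows that the isomorphism of the previous step intertwines the two $\tau$-actions, which is precisely the assertion.

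The genuine content is the non-equivariant statement, whose delicate points are the uniform WIT property along $M_{A_{0}}(1,0,-d)$, the compatibility of $\Phi_{P}$ with stability for a generic polarization, and the passage to families; all of this is Yoshioka's, and I would quote it rather than reprove it. Given that input, the $\tau$-equivariance is essentially formal, the only cautions being (i) that $\tau^{*}$ preserves the chosen generic polarization up to the equivalences used, which is automatic since $\tau=-1$ acts trivially on $H^{2}(A_{0})$, and (ii) that the identification $\widehat{F}\cong F$ implicit in viewing $\Phi_{P}$ as an endofunctor of $D^{b}(A_{0})$ is itself $\tau$-equivariant, which holds because $-1$ commutes with any isogeny, in particular with a principal polarization.
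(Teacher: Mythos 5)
Your proposal is correct and follows essentially the same route as the paper: the non-equivariant isomorphism is quoted from Yoshioka (the paper cites his Theorem 3.15, noting primitivity of the Chern character rules out strictly semistable sheaves), and the $\tau$-equivariance is deduced from the preceding lemma on $\tau^{*}\circ\Phi_{P}\cong\Phi_{P}\circ\tau^{*}$. The extra details you supply (WIT uniformity, fineness, triviality of the $\tau$-action on $H^{2}(A_{0})$) are elaborations of points the paper leaves implicit, not a different argument.
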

\begin{proof}
The isomorphism is a particular example of Theorem 3.15 in \cite{yoshioka_moduli_2001}, noting that because our Chern character is primitive, there are no strictly semistable sheaves.  The $\tau$-equivariance follows from the previous lemma.  
\end{proof}

This completes the proof of Proposition \ref{prop:KeyEnummPropp}.

\subsection{Proof of Theorem \ref{prop:KKVSYProp}}\label{ShenYinproof}
By Corollary \ref{cor:Shen-YinCOR} we have 
\begin{equation}\label{eqn:MTchiy}
\MT(\pi_{d}) = \sum_{k,l} \overline{\chi}_{y}(M_{k,l}) = \overline{\chi}_{y}\big(M_{A}(0,\beta_{d},1)^{\tau}\big)
\end{equation}
where the latter equality holds because the ordinary Hodge numbers are additive under disjoint unions.  By Corollary \ref{YoshCorr}, we have
\[\overline{\chi}_{y}\big(M_{A}(0, \beta_{d}, 1)^{\tau}\big) = 16 \, \overline{\chi}_{y}\big(\Hilb^{d}(A)^{\tau}\big)\]
since the $\chi_{y}$-genus is a deformation invariant.  Therefore
\[
\tfrac{1}{16} \MT(\pi_{d}) = \overline{\chi}_{y}\big(\Hilb^{d}(A)^{\tau}\big).
\]
The result then follows using Proposition \ref{prop:chiyrefinement} along with the expression for $Z_{A, \tau}^{-1}$ from Table \ref{table:TAB1}.

\subsection{Proof of Theorem \ref{thm:MainEmThm}}\label{ratlcurveproof}
In this section we will prove Theorem \ref{thm:MainEmThm} on the Yau-Zaslow formula for the orbifold Kummer surface.  We first need to establish a few lemmas.
\begin{lemma}\label{lemma:BeauLemmAbSurf}
Let $C$ be an integral curve with an involution $\tau:C \to C$.  If the quotient $C/\tau$ is not rational, then $e(\CompJac(C)^{\tau})=0$. 
\end{lemma}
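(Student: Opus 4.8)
The strategy is to relate the $\tau$-fixed locus of the compactified Jacobian $\CompJac(C)$ to a moduli space of sheaves on the quotient curve $C/\tau$, and then exploit the assumption that $C/\tau$ has positive genus. Write $q\colon C \to C/\tau$ for the degree-two quotient map. First I would recall that $\CompJac(C)$ parametrizes rank-one torsion-free sheaves of a fixed degree on the integral curve $C$, and that the $\tau$-action on $\CompJac(C)$ is by pullback $\F \mapsto \tau^{*}\F$ (twisted by a fixed line bundle to preserve the degree). A $\tau$-fixed point is then a sheaf $\F$ equipped with an isomorphism $\tau^{*}\F \cong \F$, i.e.\ a $\tau$-equivariant structure; equivalently, by descent along the finite map $q$, such $\F$ corresponds to a rank-one torsion-free sheaf on $C/\tau$ (with a possible twist, since pushforward/descent can shift rank and degree). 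The upshot I want is that each connected component of $\CompJac(C)^{\tau}$ is, up to torsor structure, governed by $\Pic^{0}(C/\tau)$ or more precisely by the (generalized) Jacobian of $C/\tau$, which is a positive-dimensional abelian variety when $C/\tau$ is not rational.

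\textbf{Key steps.} (1) Set up the correspondence between $\tau$-fixed points of $\CompJac(C)$ and sheaves on $C/\tau$ via descent: a $\tau$-invariant line bundle on $C$ with a choice of linearization descends, and the two linearizations on each component differ by the two characters of $\Z_{2}$; track how this affects degrees so the relevant components are identified with torsors under $\Jac(C/\tau)$ (or its compactification). (2) Observe that tensoring by $\Pic^{0}(C/\tau)$ (pulled back to $C$) acts freely on $\CompJac(C)^{\tau}$: since $C/\tau$ has positive arithmetic genus, $\Pic^{0}(C/\tau)$ is a positive-dimensional connected group, and a connected positive-dimensional group acting on a variety forces the Euler characteristic to vanish — this is the same mechanism as in the classical Beauville/Yau–Zaslow argument, where a free action of a positive-dimensional abelian variety kills $e(\,\cdot\,)$. (3) Conclude $e(\CompJac(C)^{\tau}) = 0$. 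The freeness in step (2) is what needs care: I would argue that if $L_{0} \in \Pic^{0}(C/\tau)$ fixes a torsion-free sheaf $\F$ on $C/\tau$ then $q^{*}L_{0}$ is trivial, and since $q$ is finite of degree two the kernel of $q^{*}$ on $\Pic^{0}$ is finite, hence a generic one-parameter subgroup still acts freely — enough to run the Euler-characteristic-vanishing argument.

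\textbf{Main obstacle.} The delicate point is handling the \emph{boundary} of the compactified Jacobian: the descent/pushforward correspondence is cleanest for honest line bundles, but $\CompJac(C)$ includes rank-one torsion-free sheaves that are not locally free at the singular (and in particular the fixed) points of $C$. I would need to check that the $\Pic^{0}(C/\tau)$-action (by tensoring with pulled-back degree-zero line bundles, which are locally free) is still well-defined and free on all of $\CompJac(C)^{\tau}$, including these boundary strata — tensoring by a line bundle preserves the torsion-free rank-one condition, so well-definedness is fine, and freeness reduces as above to $q^{*}$ having finite kernel on $\Pic^{0}$. A secondary subtlety is that $C/\tau$ being "not rational" must be used as: arithmetic genus of $C/\tau$ is at least one, so that $\Jac(C/\tau)$ (which for a singular integral curve is an extension of an abelian variety by an affine group) is positive-dimensional; even if the abelian part vanishes, the affine part $\mathbb{G}_{m}^{k}$ or $\mathbb{G}_{a}^{k}$ being nontrivial is equally good for the vanishing-of-$e$ argument via a free $\mathbb{G}_{m}$ or $\mathbb{G}_{a}$ action. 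I would present the argument uniformly by exhibiting a free action of a positive-dimensional connected algebraic group on $\CompJac(C)^{\tau}$ and invoking the standard fact that this forces $e = 0$.
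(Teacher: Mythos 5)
Your overall mechanism (produce a positive-dimensional group acting freely on $\CompJac(C)^{\tau}$ and conclude the Euler characteristic vanishes) is the same one the paper uses, but the step you need most is exactly the one your argument gets wrong: freeness at the boundary. The stabilizer of a point of $\CompJac(C)$ under tensoring by degree-zero line bundles is not governed by the kernel of $q^{*}$. If $\F$ is not locally free, write $\F \cong \nu_{*}\G$ for a partial normalization $\nu : C' \to C$; then by the projection formula every $M \in \ker\big(\nu^{*} : \Jac(C) \to \Jac(C')\big)$ satisfies $\F \otimes M \cong \F$, and this kernel is a positive-dimensional affine group. So $q^{*}\Pic^{0}(C/\tau)$ can meet these stabilizers in positive dimension: for instance, when a node of $C$ fixed by $\tau$ (with branches preserved) maps to a node of $C/\tau$, the $\mathbb{G}_{m}$-factor of $\Pic^{0}(C/\tau)$ attached to that node pulls back into the stabilizer of any invariant sheaf that fails to be locally free there. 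Reducing freeness to ``$q^{*}$ has finite kernel on $\Pic^{0}$'' therefore does not address the obstacle you yourself identified. This is precisely the nontrivial input that the paper does not reprove but imports from Beauville: the sequence $0 \to (\text{affine part}) \to \Jac(C) \to \Jac(\widetilde{C}) \to 0$ splits canonically (Beauville, Prop.\ 2.2), and the resulting copy of $\Jac(\widetilde{C})$ inside $\Jac(C)$ acts \emph{freely} on the whole compactified Jacobian (Beauville, Lem.\ 2.1), hence $\Jac(\widetilde{C})^{\tau}$ acts freely on $\CompJac(C)^{\tau}$.

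Your reading of the hypothesis is also off, in a way that would contradict the rest of the paper. ``$C/\tau$ not rational'' means the \emph{geometric} genus of $C/\tau$ is positive (its normalization is not $\P^{1}$), not that its arithmetic genus is. If your variant using the affine part of the generalized Jacobian of a singular rational quotient were valid, it would give $e(\CompJac(C)^{\tau})=0$ also for $\tau$-invariant curves whose quotient is, say, nodal rational --- but those are exactly the curves whose contributions $e(\CompJac(C)^{\tau})$ are needed to be nonzero in Theorem \ref{thm:MainEmThm} and Conjecture \ref{conj:CompJacConj}; this is another symptom of the failure of freeness on the boundary. (Also, a free $\mathbb{G}_{a}$-action does not force $e=0$: $\mathbb{G}_{a}$ acting on itself has $e=1$; the vanishing argument needs torsion, i.e.\ $\mathbb{G}_{m}$'s or an abelian variety.) The paper's route repairs both points at once: pass to normalizations to get $\widetilde{\alpha} : \widetilde{C} \to \widetilde{C/\tau}$, so the abelian variety $\Jac(\widetilde{C/\tau})$ --- positive-dimensional exactly when $C/\tau$ is not rational --- embeds in $\Jac(\widetilde{C})^{\tau}$; transport it into $\Jac(C)^{\tau}$ via Beauville's splitting; invoke Beauville's freeness on $\CompJac(C)^{\tau}$; and conclude since an abelian variety contains cyclic subgroups of every order and a free $\Z_{n}$-action forces $n \mid e(\CompJac(C)^{\tau})$. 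Note the paper never descends sheaves to $C/\tau$ at all, so the equivariant-descent bookkeeping in your step (1) is unnecessary.
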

\begin{proof}
Let $\eta : \widetilde{C} \to C$ and $\delta : \widetilde{C/\tau} \to C/\tau$ be the corresponding normalization maps, and $\alpha: C \to C/\tau$ the quotient.  We get a commuting diagram
\[ \begin{tikzcd}
\widetilde{C} \arrow{r}{\eta} \arrow[swap]{d}{\widetilde{\alpha}} & C \arrow{d}{\alpha} \\
\widetilde{C/\tau} \arrow{r}{\delta}& C/\tau
\end{tikzcd}
\]
where the map $\widetilde{\alpha}$ is defined by the universal property of normalizations: since $\alpha$ is surjective and $C/\tau$ is integral, $\alpha$ factors uniquely via a map $C \to \widetilde{C/\tau}$.  We get $\widetilde{\alpha}$ by composing with $\eta$.  By pullback we get a short exact sequence of Abelian groups
\[0 \to \Jac(\widetilde{C/\tau}) \to \Jac(\widetilde{C}) \to \text{Prym}(\widetilde{\alpha}) \to 0\]
where $\text{Prym}(\widetilde{\alpha})$ is the Prym variety associated to $\widetilde{\alpha}$.  Since $\widetilde{\alpha}$ is $\tau$-equivariant with the trivial action on $\widetilde{C/\tau}$, by restricting to the fixed locus we get an inclusion
\begin{equation}\label{eqn:AbVarSubgrp}
\Jac(\widetilde{C/\tau}) \subset \Jac(\widetilde{C})^{\tau}.
\end{equation}
Pulling back via $\eta$, we get a short exact sequence
\[0 \to G \to \Jac(C) \to \Jac(\widetilde{C}) \to 0\]
where $G$ is a product of additive and multiplicative groups.  By \cite[Prop.2.2]{beauville_counting_1999_TWO}, this sequence splits canonically so that $\Jac(\widetilde{C}) \subset \Jac(C)$ corresponds to the subgroup of line bundles on $C$ pushed forward from line bundles on $\widetilde{C}$.  Because $\eta$ is $\tau$-equivariant, this descends to an inclusion 
\[\Jac(\widetilde{C})^{\tau} \subset \Jac(C)^{\tau}.\]
By \cite[Lem.2.1]{beauville_counting_1999_TWO} we thereby get a free action of $\Jac(\widetilde{C})^{\tau}$ on the invariant compactified Jacobian $\CompJac(C)^{\tau}$.  But if $C/\tau$ is not rational, we know by (\ref{eqn:AbVarSubgrp}) that a positive-dimensional Abelian variety therefore acts freely on $\CompJac(C)^{\tau}$.  If a finite group of order $n$ acts freely on $\CompJac(C)^{\tau}$, then $n$ must divide $e(\CompJac(C)^{\tau})$.  Because an abelian variety contains cyclic subgroups of all orders, $e(\CompJac(C)^{\tau})=0$.  
\end{proof}

\begin{lemma}\label{lemma:EulCharLemma}
If $f: X \to Y$ is a surjective morphism of projective algebraic varieties and all fibers have vanishing Euler characteristic, then $e(X)=0$.
\end{lemma}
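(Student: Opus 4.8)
The plan is to reduce the statement to the standard additivity and multiplicativity properties of the compactly supported Euler characteristic. Recall that for any complex algebraic variety $V$ one has $e(V)=e_{c}(V):=\sum_{i}(-1)^{i}\dim H^{i}_{c}(V,\Q)$, that $e_{c}$ is additive over a closed--open decomposition (if $W\subseteq V$ is closed with complement $U$, then $e_{c}(V)=e_{c}(U)+e_{c}(W)$), and that $e_{c}(E)=e_{c}(B)\cdot e_{c}(\Phi)$ whenever $g\colon E\to B$ is a topologically locally trivial fibre bundle with fibre $\Phi$ over a connected base. I would carry out the proof by Noetherian induction on $\dim Y$.

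For the base case $\dim Y=0$, the variety $Y$ is a finite set of points, so $X=\coprod_{y\in Y}f^{-1}(y)$ and $e(X)=\sum_{y}e(f^{-1}(y))=0$ by hypothesis. For the inductive step, invoke Verdier's generic triviality theorem (equivalently, generic smoothness together with Ehresmann's fibration theorem in characteristic zero) to obtain a dense Zariski-open, connected, smooth subvariety $U\subseteq Y$ such that $f$ restricts to a topologically locally trivial fibration $f^{-1}(U)\to U$ with fibre $F:=f^{-1}(y_{0})$ for some $y_{0}\in U$. Then
\[
e\big(f^{-1}(U)\big)=e_{c}\big(f^{-1}(U)\big)=e_{c}(U)\cdot e_{c}(F)=e_{c}(U)\cdot e(F)=0,
\]
since $e(F)=0$. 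Setting $Z:=Y\setminus U$, a closed subvariety with $\dim Z<\dim Y$, the restriction $f|_{f^{-1}(Z)}\colon f^{-1}(Z)\to Z$ is again a surjective morphism of projective varieties whose fibres are fibres of $f$, hence have vanishing Euler characteristic; by the inductive hypothesis $e(f^{-1}(Z))=0$. Additivity of $e_{c}$ then gives $e(X)=e_{c}(f^{-1}(U))+e_{c}(f^{-1}(Z))=0$, closing the induction.

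The only genuinely non-formal ingredient is the existence of the dense open $U$ over which $f$ becomes topologically locally trivial; this is where one appeals to Verdier's stratification theory, or, if one prefers, one stratifies $Y$ into finitely many locally closed subvarieties over each of which $f$ restricts to such a bundle and sums the contributions. Everything else is the bookkeeping of additive and multiplicative Euler characteristics, and the identification $e=e_{c}$ for complex varieties is what allows the argument to run over the non-proper pieces $f^{-1}(U)$.
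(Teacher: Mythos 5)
Your argument is correct and is essentially the paper's own: the paper likewise reduces to the case of a topologically locally trivial fibration (where $e(X)=e(F)\cdot e(Y)$) and then handles the general case by stratifying $Y$ so that $f$ is locally trivial over each stratum, summing via excision/additivity of the Euler characteristic. Your Noetherian-induction phrasing with Verdier's generic triviality theorem is just a slightly more explicit packaging of the same stratification argument, which you yourself note as the alternative at the end.
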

\begin{proof}
Consider first the case that $f$ is a topological locally-trivial fibration with fiber $F$.  A well-known result is that $e(X) = e(F) \cdot e(Y)$ from which the lemma follows.  In the more general case, take a stratification of $Y$ such that $f$ is a topological locally-trivial fibration over each strata.  The excision property of the Euler characteristic then reduces the problem to the previous case.
\end{proof}

\begin{proof}[Proof of Theorem \ref{thm:MainEmThm}]
The genus $h=0$ specialization of the Maulik-Toda polynomial is $y=-1$, as we see from (\ref{eqn:MTdefn}).  By the proof of Theorem \ref{prop:KKVSYProp},
\begin{equation}
\begin{split}
\ndo & = \mathsf{MT}_{-1}(\pi_{d}) \\
& = e\big( M_{A}(0,\beta_{d},1)^{\tau}\big) = 16 \, e\big( \Hilb^{d}(A)^{\tau} \big).
\end{split}
\end{equation}
This proves the first assertion, noting the expression for $Z_{A, \tau}$ (see Table \ref{table:TAB1}).   

For the final claim we assume $A$ is a $(1,d)$-polarized Abelian surface with Picard rank one, and $\beta_{d}$ the unique primitive generator.  In this case, 
\[M_{A}(0,\beta_{d},1) \cong \RelJac\]
where $\RelJac \to \Chow_{\beta_{d}}(A)$ is the relative compactified Jacobian of degree $d+1$, which is the arithmetic genus of the class $\beta_{d}$.  The fiber over a curve $C \in \Chow_{\beta_{d}}(A)$ is $\overline{\Jac}^{\,d+1}(C)$ parameterizing rank $1$ torsion-free sheaves of degree $d+1$ on $C$.  We define
\[
\Pi \subset \Chow_{\beta_{d}}(A)^{\tau}
\]
to be the set of $\tau$-invariant curves in the class $\beta_{d}$ with rational quotient.  The set $\Pi$ is finite because if it were not, the singular $K3$ surface $A/\tau$ would contain a positive-dimensional family of rational curves, which cannot occur.  Let $Y$ be the open subvariety of the invariant Chow variety parameterizing $\tau$-invariant curves with non-rational quotient.
\[Y = \Chow_{\beta_{h}}(A)^{\tau} - \Pi.\]
Given the map $\pi_{d}: \RelJac^{\tau} \to \Chow_{\beta_{d}}(A)^{\tau}$, we get the following decomposition of the total space
\[\RelJac^{\tau} = \pi_{d}^{-1}(Y) \cup \pi_{d}^{-1}(\Pi).\]
Since $\Pi$ is finite, $\pi_{d}^{-1}(\Pi)$ is a closed subvariety of $\RelJac^{\tau}$ whose compliment is $\pi_{d}^{-1}(Y)$.  Thus, by the excision property of the Euler characteristic
\[e(\RelJac^{\tau}) = e(\pi_{d}^{-1}(Y)) + e(\pi_{d}^{-1}(\Pi)).\]

The fiber of $\pi_{d}$ over an invariant curve $C$ is $\CompJac(C)^{\tau} \cong \CompJac^{\, d+1}(C)^{\tau}$, where the isomorphism is twisting by a fixed $\tau$-invariant line bundle of degree $d+1$.  Therefore, all fibers of the restricted family $\pi_{d}^{-1}(Y) \to Y$ have vanishing Euler characteristic by Lemma \ref{lemma:BeauLemmAbSurf}, and $e(\pi_{d}^{-1}(Y))=0$ by Lemma \ref{lemma:EulCharLemma}.  Finally, we have 
\[\pi_{d}^{-1}(\Pi) = \coprod_{C \in \Pi} \CompJac(C)^{\tau},\]
from which it now follows that 
\[e(\RelJac^{\tau}) = \sum_{C \in \Pi} e(\CompJac(C)^{\tau}).\]
Because $\ndo = e(\RelJac^{\tau})$, this completes the proof.  
\end{proof}

\subsection{Proof of Conjecture \ref{conj:CompJacConj} for Smooth Curves}\label{subsection:pf of conj for sm curves}

Let $C$ be a smooth curve of genus $g$ with an involution $\tau: C \to C$.  Let $\alpha : C \to C/\tau$ be the quotient map, and let $h$ be the genus of the smooth curve $C/\tau$.  Just as in the proof of Lemma \ref{lemma:BeauLemmAbSurf}, we get a short exact sequence
\[
0 \to \Jac(C/\tau) \to \Jac(C) \to \text{Prym}(\alpha) \to 0
\]
where $\text{Prym}(\alpha)$ is an Abelian variety of dimension $g-h$ called the Prym variety.  There is a canonical lift of the $\tau$ action to $\Jac(C)$ by pullback which acts trivially on $\Jac(C/\tau)$.  Moreover, the induced action on $\text{Prym}(\alpha)$ is by the standard invoultion $a \mapsto -a$, which has $2^{2g-2h}$ isolated fixed points.  Therefore
\[
\Jac(C)^{\tau} = \coprod_{i=1}^{2^{2g-2h}} \Jac(C/\tau)
\]
If $C/\tau$ is a rational curve, then $h=0$ and $\Jac(C/\tau)$ is a point, which proves the conjecture in this case.

\appendix
\section{Modular Forms and Eta Products}\label{modular}

This appendix will be devoted to giving a brief overview of the modular objects relevant to our results.  An excellent reference is Chapters 1 and 2 of \cite{kohler_eta_2011}.  We are interested in modular forms of integral or half-integral weight with multiplier system for the congruence subgroup 
\[\Gamma_{0}(N) := \bigg\{
\begin{pmatrix}
a & b  \\
c & d 
\end{pmatrix}
\in \SL_{2}(\Z) \, \bigg| \, c \equiv 0 \Mod{N} \bigg\} \subset \SL_{2}(\Z)\]
for an integer $N \geq 1$.  A multiplier system on $\Gamma_{0}(N)$ is a function $v : \Gamma_{0}(N) \to \C^{*}$ satisfying some consistency conditions.  We will not need the details, so we refer the reader to \cite[Sec.\@ 2.6]{iwaniec_topics_1997}.

Let $\Hf = \{ z \in \C \, | \, \Im(z) > 0 \}$ be the upper-half plane in $\C$.  
\begin{defn}
A holomorphic function $f: \Hf \to \C$ is called a \emph{modular form} of weight $k \in \R$ and multiplier system $v$ on $\Gamma_{0}(N)$ if $f$ is holomorphic at all cusps $\Q \cup \{\infty\}$, and if for all $L \in \Gamma_{0}(N)$, $f$ transforms as
\begin{equation}\label{modfrmtrans}
f(L \tau) = f\bigg(\frac{a \tau + b}{c \tau + d}\bigg) = v(L) (c \tau +d)^{k} f(\tau), \,\,\,\,\,\,\,
L=
\begin{pmatrix}
a & b  \\
c & d 
\end{pmatrix}.
\end{equation}
We call $f$ a \emph{cusp form} if additionally, $f$ vanishes at all cusps.    
\end{defn}

We employ the change of variables $q = \exp(2 \pi i \tau)$ and with it, the abuse of notation writing $f(\tau)$ and $f(q)$ interchangeably.  The fundamental building block of a large class of modular forms is the Dedekind eta function (or just eta function)
\[\eta(q) = q^{\frac{1}{24}} \prod_{n=1}^{\infty}(1-q^{n}),\]
which is a modular form of weight $\frac{1}{2}$ and multiplier system $v_{\eta}$ on $\SL_{2}(\Z)$; see \cite[Sec.\@ 2.8]{iwaniec_topics_1997} where $v_{\eta}$ is given explicitly.\footnote{Equivalently, $\eta(q)$ is a modular form of weight $\frac{1}{2}$ on the metaplectic double cover $\text{Mp}_{2}(\Z)$ of $\SL_{2}(\Z)$.}  
\begin{defn}
An \emph{eta product} of level $N \geq 1$ is a function $f : \Hf \to \C$ of the form
\[f(q) = \prod_{m|N} \eta(q^{m})^{a_{m}}\]
such that $a_{m} \in \Z$ (possibly negative, or zero) for all $m | N$, and where the product is over positive divisors of $N$.  
\end{defn}
From the modular properties of the Dedekind eta function, one can show that an eta product $f$ of level $N$ transforms as a modular form on $\Gamma_{0}(N)$ of weight
\[k = \frac{1}{2} \sum_{m|N} a_{m} \in \tfrac{1}{2}\Z,\]
and with multiplier system
\[v_{f}(L) = \prod_{m|N} \bigg(v_{\eta}
\begin{pmatrix}
a & mb  \\
c/m & d 
\end{pmatrix}\bigg)^{a_{m}}, \,\,\,\,\,\,\,\,\,\,\,\,\,\,\,\,\,\, 
L=
\begin{pmatrix}
a & b  \\
c & d 
\end{pmatrix}\]
When we say ``transforms as" we mean that $f$ satisfies (\ref{modfrmtrans}) for all $L \in \Gamma_{0}(N)$.  An eta product $f$ is automatically holomorphic on $\Hf$.  This is because the form of $\eta(q)$ indicates that any poles of $f$ must occur at $q=0$ or $|q|=1$.  All that is left to consider is when an eta product is holomorphic at the cusps.  The following proposition gives necessary and sufficient conditions.  
\begin{prop}[{\cite[Cor.\@ 2.3]{kohler_eta_2011}}]\label{prop:KohlerProp}
An eta product $f$ of level $N$ is holomorphic at the cusps if and only if the following holds for all positive divisors $c$ of $N$
\[\sum_{m|N} \frac{(\gcd(c,m))^{2})}{m} a_{m} \geq 0.\]
Moreover, $f$ vanishes at all cusps if and only if each inequality is strict.  An eta product is therefore a modular form of weight $k$ for $\Gamma_{0}(N)$ if and only if each inequality is satisfied, and it is a cusp form if and only if each is strictly satisfied.  
\end{prop}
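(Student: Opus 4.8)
The plan is to reduce the entire statement to a single local computation at each cusp. By the paragraph preceding the proposition, an eta product $f(q)=\prod_{m\mid N}\eta(q^{m})^{a_{m}}$ is automatically holomorphic on $\Hf$ and satisfies the transformation rule (\ref{modfrmtrans}) for every $L\in\Gamma_{0}(N)$, with weight $k=\tfrac12\sum_{m\mid N}a_{m}$ and the displayed multiplier system. Hence $f$ is a modular form for $\Gamma_{0}(N)$ exactly when it is holomorphic at every cusp, and a cusp form exactly when it vanishes at every cusp; so the last two sentences of the proposition are immediate once the first two are known, and the first two amount to nothing more than a computation of $\operatorname{ord}_{\mathfrak a}(f)$ for each cusp $\mathfrak a$ of $\Gamma_{0}(N)$.

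First I would use that the cusps of $\Gamma_{0}(N)$ may be represented by fractions $a/c$ with $c\mid N$ and $\gcd(a,c)=1$, of width $h=N/\big(c\,\gcd(c,N/c)\big)$, and compute $\operatorname{ord}_{a/c}(f)$ by expanding $f$ after applying a matrix $\gamma=\left(\begin{smallmatrix}a&b\\c&d\end{smallmatrix}\right)\in\SL_{2}(\Z)$ with $\gamma\cdot\infty=a/c$. For each $m\mid N$, set $g=\gcd(m,c)$; since $\gcd(a,c)=1$ we have $\gcd(ma,c)=g$, and Hermite reduction gives
\[
\begin{pmatrix}m&0\\0&1\end{pmatrix}\gamma=\gamma_{m}\begin{pmatrix}g&\beta_{m}\\0&m/g\end{pmatrix},\qquad \gamma_{m}\in\SL_{2}(\Z),\ \beta_{m}\in\Z.
\]
Then $\eta(m\gamma\tau)=\eta(\gamma_{m}\cdot w_{m})$ with $w_{m}=\tfrac{g^{2}\tau+g\beta_{m}}{m}$, and the Dedekind functional equation for $\eta$ on $\SL_{2}(\Z)$ (generated by $\eta(-1/\tau)=\sqrt{-i\tau}\,\eta(\tau)$ and $\eta(\tau+1)=e^{\pi i/12}\eta(\tau)$) rewrites $\eta(m\gamma\tau)$ as a root of unity times $(c_{m}w_{m}+d_{m})^{1/2}\,\eta(w_{m})$, where $(c_{m},d_{m})$ is the bottom row of $\gamma_{m}$. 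A short check — $c_{m}=c/g$ and $c_{m}w_{m}+d_{m}=\tfrac{g}{m}(c\tau+d)$, using $ad-bc=1$ — shows that the factors $\prod_{m}(c_{m}w_{m}+d_{m})^{a_{m}/2}$ combine with $(c\tau+d)^{-k}$ into a nonzero constant. Therefore $(c\tau+d)^{-k}f(\gamma\tau)$ equals a constant times $\prod_{m\mid N}\eta(w_{m})^{a_{m}}$, whose expansion in $q=e^{2\pi i\tau}$ begins with $q^{\frac{1}{24}\sum_{m\mid N}\gcd(c,m)^{2}a_{m}/m}$.

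Passing to the true local uniformizer $q_{h}=q^{1/h}$ at the cusp, this says
\[
\operatorname{ord}_{a/c}(f)=\frac{h}{24}\sum_{m\mid N}\frac{\gcd(c,m)^{2}}{m}\,a_{m}=\frac{N}{24\,c\,\gcd(c,N/c)}\sum_{m\mid N}\frac{\gcd(c,m)^{2}}{m}\,a_{m},
\]
and since the prefactor is a positive rational not depending on $m$, $f$ is holomorphic at $a/c$ iff $\sum_{m\mid N}\gcd(c,m)^{2}a_{m}/m\ge 0$, and vanishes there iff the inequality is strict. The order depends on the cusp only through $c$, and the relevant $c$ run over the positive divisors of $N$, so letting $\mathfrak a$ range over all cusps yields exactly the asserted family of inequalities; the modular-form and cusp-form characterizations then follow from the opening reduction.

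The main work is the classical Ligozat bookkeeping rather than anything deep: producing a full set of cusp representatives of $\Gamma_{0}(N)$ among the $a/c$ with $c\mid N$ and computing their widths; verifying that $\operatorname{ord}_{a/c}(f)$ is independent of $a$; and tracking the roots of unity and square-root branches in the eta transformation carefully enough to be certain that $(c\tau+d)^{-k}\prod_{m}(c_{m}w_{m}+d_{m})^{a_{m}/2}$ is genuinely a constant. None of these steps is hard, and the full details appear in \cite[Ch.~1--2]{kohler_eta_2011}, which in practice I would simply cite.
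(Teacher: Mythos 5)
Your proposal is correct: the paper gives no argument of its own for this proposition, simply citing K\"ohler's Corollary 2.3, and your sketch is precisely the standard Ligozat-type computation behind that citation --- cusp representatives $a/c$ with $c\mid N$ of width $N/(c\gcd(c,N/c))$, the Hermite reduction and eta functional equation giving $(c\tau+d)^{-k}f(\gamma\tau)$ as a constant times $\prod_{m}\eta(w_{m})^{a_{m}}$, and the resulting order $\frac{N}{24\,c\,\gcd(c,N/c)}\sum_{m\mid N}\gcd(c,m)^{2}a_{m}/m$, whose sign/strict positivity is exactly the stated criterion. Since you, like the paper, ultimately defer the bookkeeping to K\"ohler, this is essentially the same treatment and nothing further is needed.
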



\bibliographystyle{alpha}
\bibliography{SymplecticActionsAbelianSurface,SecondBib}

\begin{thebibliography}{BOPY18}

\bibitem[BBR09]{bartocci_fourier-mukai_2009}
Claudio Bartocci, Ugo Bruzzo, and Daniel~Hernández Ruipérez.
\newblock {\em Fourier-{Mukai} and {Nahm} {Transforms} in {Geometry} and
  {Mathematical} {Physics}}.
\newblock Progress in {Mathematics}. Birkhäuser Basel, 2009.

\bibitem[Bea99]{beauville_counting_1999_TWO}
Arnaud Beauville.
\newblock Counting rational curves on {$K3$} surfaces.
\newblock {\em Duke Mathematical Journal}, 97(1):99--108, 1999.

\bibitem[BG20]{bryan_g-fixed_2020_TWO}
Jim Bryan and Ádám Gyenge.
\newblock {$G$}-fixed {Hilbert} schemes on {$K3$} surfaces, modular forms, and
  eta products.
\newblock {\em arXiv:1907.01535 [math]}, 2020.

\bibitem[BL99]{bryan_enumerative_1999}
Jim Bryan and Naichung Leung.
\newblock The {Enumerative} {Geometry} of {K3} {Surfaces} {And} {Modular}
  {Forms}.
\newblock {\em Journal of the American Mathematical Society}, 13, December
  1999.

\bibitem[BOPY18]{bryan_curve_2018}
Jim Bryan, Georg Oberdieck, Rahul Pandharipande, and Qizheng Yin.
\newblock Curve counting on abelian surfaces and threefolds.
\newblock {\em Algebraic Geometry}, 5(4):398--463, 2018.

\bibitem[BP]{bryan_counting_nodate}
Jim Bryan and Stephen Pietromonaco.
\newblock Counting {Invariant} {Curves}: {A} {Theory} of {Gopakumar}-{Vafa}
  {Invariants} for {Calabi}-{Yau} {Threefolds} with an {Involution}.
\newblock (In preparation).

\bibitem[dCM09]{de_cataldo_decomposition_2009}
Mark de~Cataldo and Luca Migliorini.
\newblock The decomposition theorem, perverse sheaves and the topology of
  algebraic maps.
\newblock {\em Bulletin of the American Mathematical Society}, 46(4):535--633,
  2009.

\bibitem[EZ85]{eichler_theory_1985}
Martin Eichler and Don Zagier.
\newblock {\em The {Theory} of {Jacobi} {Forms}}.
\newblock Progress in {Mathematics}. Birkhäuser Basel, 1985.

\bibitem[FGvS99]{fantechi_euler_1999}
Barbara Fantechi, Lothar Göttsche, and Duco van Straten.
\newblock Euler number of the compactified {Jacobian} and multiplicity of
  rational curves.
\newblock {\em Journal of Algebraic Geometry}, 8:115--133, 1999.

\bibitem[Fuj88]{fujiki_finite_1988}
Akira Fujiki.
\newblock Finite {Automorphism} {Groups} of {Complex} {Tori} of {Dimension}
  {Two}.
\newblock {\em Publ. Res. Inst. Math. Sci.}, 24(1):1--97, 1988.

\bibitem[Gö90]{gottsche_betti_1990}
Lothar Göttsche.
\newblock The {Betti} numbers of the {Hilbert} scheme of ponts on a smooth
  projective surface.
\newblock {\em Mathematische Annalen}, 286(1-3):193--208, 1990.

\bibitem[Hwa07]{hwang_base_2007}
Jun-Muk Hwang.
\newblock Base manifolds for fibrations of projective irreducible symplectic
  manifolds.
\newblock {\em Inventiones mathematicae}, 174(3), November 2007.

\bibitem[Iwa97]{iwaniec_topics_1997}
Henryk Iwaniec.
\newblock {\em Topics in {Classical} {Automorphic} {Forms}}, volume~17 of {\em
  Graduate {Studies} in {Mathematics}}.
\newblock American Mathematical Society, 1997.

\bibitem[Kat08]{katz_genus_2008}
S.~Katz.
\newblock Genus zero {Gopakumar}-{Vafa} invariants of contractible curves.
\newblock {\em Journal of Differential Geometry}, 79(2):185--195, June 2008.

\bibitem[KKV99]{katz_m_1999}
Sheldon~H. Katz, Albrecht Klemm, and Cumrun Vafa.
\newblock M theory, topological strings and spinning black holes.
\newblock {\em Adv.Theor.Math.Phys.}, 3:1445--1537, October 1999.
\newblock arXiv:hep-th/9910181.

\bibitem[Kö11]{kohler_eta_2011}
Günter Köhler.
\newblock {\em Eta {Products} and {Theta} {Series} {Identities}}.
\newblock Springer {Monographs} in {Mathematics}. Springer-Verlag, Berlin
  Heidelberg, 2011.

\bibitem[MT18]{maulik_gopakumarvafa_2018}
Davesh Maulik and Yukinobu Toda.
\newblock Gopakumar–{Vafa} invariants via vanishing cycles.
\newblock {\em Inventiones mathematicae}, 213(3):1017--1097, September 2018.

\bibitem[PT16]{Pandharipande:2014qoa}
R.~Pandharipande and R.P. Thomas.
\newblock {The Katz-Klemm-Vafa conjecture for {$K3$} surfaces}.
\newblock {\em Forum Math. Pi}, 4:e4, 2016.

\bibitem[Ros12]{rose_counting_2012}
Simon Rose.
\newblock Counting {Hyperelliptic} curves on {Abelian} surfaces with
  {Quasi}-modular forms.
\newblock {\em arXiv:1202.2094 [math]}, April 2012.
\newblock PhD thesis, University of British Columbia 2012.

\bibitem[SY19]{shen_topology_2019}
Junliang Shen and Qizheng Yin.
\newblock Topology of {Lagrangian} fibrations and {Hodge} theory of
  hyper-{Kahler} manifolds.
\newblock {\em arXiv:1812.10673 [math]}, May 2019.

\bibitem[Vol14]{volpato_symmetries_2014_TWO}
Roberto Volpato.
\newblock On symmetries of {$\mathcal{N}=(4,4)$} sigma models on {$T^{4}$}.
\newblock {\em Journal of High Energy Physics}, 2014(8), 2014.

\bibitem[Yos01]{yoshioka_moduli_2001}
Kota Yoshioka.
\newblock Moduli spaces of stable sheaves on abelian surfaces.
\newblock {\em Mathematische Annalen}, 321(4):817--884, 2001.

\bibitem[YZ96]{yau_bps_1996}
Shing-Tung Yau and Eric Zaslow.
\newblock {BPS} states, string duality, and nodal curves on {K3}.
\newblock {\em Nuclear Physics B}, 471(3):503--512, 1996.

\bibitem[Zha21]{zhan_counting_2021}
Sailun Zhan.
\newblock Counting {Rational} {Curves} on {K3} {Surfaces} {With} {Finite}
  {Group} {Actions}.
\newblock {\em International Mathematics Research Notices}, (rnaa320), January
  2021.
\newblock arXiv:1907.03330.

\end{thebibliography}

\smallskip
\smallskip

\noindent Department of Mathematics, University of British Columbia, 1984 Mathematics Road, Vancouver, B.C., Canada V6T1Z2.
\smallskip

\noindent email: {\tt spietro@math.ubc.ca}

\end{document}